\newtheorem{theorem}{Theorem}[section]
\newtheorem{corollary}[theorem]{Corollary}
\newtheorem{lemma}[theorem]{Lemma}
\newtheorem{proposition}[theorem]{Proposition}
\theoremstyle{definition}
\newtheorem{definition}[theorem]{Definition}
\newtheorem{remark}[theorem]{Remark}
\newcommand{\mm}     {{\hbox{\hskip 0.5pt}}}
\newcommand{\m}      {{\hbox{\hskip 1pt}}}
\newcommand{\nm}     {{\hbox{\hskip -3pt}}}
\newcommand{\bluff}  {{\hbox{\raise 15pt \hbox{\mm}}}}
\newcommand{\sbluff} {{\hbox{\raise  9pt \hbox{\mm}}}}
\newfont{\roma}{cmr10 scaled 1200}
\def\dref#1{(\ref{#1})}
\def\crr{\cr\noalign{\vskip2mm}}
\def\disp{\displaystyle}
\renewcommand{\L}    {{\Lambda}}
\renewcommand{\l}    {{\lambda}}
\renewcommand{\o}    {{\omega}}
\newcommand{\Om}     {{\Omega}}
\newcommand{\half}   {{\frac{1}{2}}}
\newcommand{\dd}     {{\rm d\hbox{\hskip 0.5pt}}}
\newcommand{\rarrow} {\mathop{\rightarrow}}
\newcommand{\FORALL} {{\hbox{$\hskip 11mm \forall \;$}}}
\newcommand{\CL}     {{C_\Lambda}}
\renewcommand{\Re}   {{\rm Re\,}}
\renewcommand{\Im}   {{\rm Im\,}}
\newcommand{\aline}  {{\mathbb A}}
\renewcommand{\cline}{{\mathbb C}}
\renewcommand{\hline}{{\mathbb H}}
\newcommand{\nline}  {{\mathbb N}}
\newcommand{\rline}  {{\mathbb R}}
\newcommand{\tline}  {{\mathbb T}}
\newcommand{\zline}  {{\mathbb Z}}
\newcommand{\Fscr}   {{\mathcal F}}
\newcommand{\Kscr}   {{\mathcal K}}
\newcommand{\Lscr}   {{\mathcal L}}
\newcommand{\Oscr}   {{\mathcal O}}
\newcommand{\AAA}    {{\bf A}}
\newcommand{\GGG}    {{\bf G}}
\newcommand{\Kf}     {{K_{\hbox{\hskip -0.5mm}f}}}
\newcommand{\bbm}[1]{\left[\begin{matrix} #1 \end{matrix}\right]}
\newcommand{\sbm}[1]{\left[\begin{smallmatrix} #1
   \end{smallmatrix}\right]}
\numberwithin{equation}{section}
\title[Regulator problem for Schr\"{o}dinger equation]
      {Solving the regulator problem for the one-dimensional
       Schr\"{o}dinger equation via backstepping}
\author[H.C. Zhou]{Hua-Cheng Zhou}
\address[H.C. Zhou]{School of Mathematics and Statistics, Central 
         South University, Changsha, 410075, PR China}
\email{{\tt hczhou\m @\m amss.ac.cn}}
\author[G. Weiss]{George Weiss}
\address[G. Weiss]{Corresponding author. School of Electrical 
         Engineering, Tel Aviv University, Ramat Aviv 69978, Israel}
\email{\tt gweiss\m @\m eng.tau.ac.il}
\keywords{regulator problem, Schr\"{o}dinger
equation, backstepping, controller with internal loop, exosystem,
admissible observation operator, compatible system node, observer.
\\ \m \ \ The first author is supported by the National Natural 
Science Foundation of China (grant no. 61803386). The second author 
is a partner (coordinator) in the ITN project ConFlex. This project
is funded by the European Union's Horizon 2020 research and 
innovation programme under the Marie Sklodowska-Curie grant 
agreement no. 765579.}
\subjclass[2010]{34L40, 93B52, 93D15.}
\begin{document}
\begin{abstract}
We investigate the regulator problem (tracking and disturbance
rejection) for a system (plant) described by a boundary controlled
anti-stable linear one-dimensional Schr\"{o}dinger equation, using the
backstepping approach. The output to be controlled is not required to
be measurable and its observation operator is assumed to be admissible
for a certain operator semigroup that is related to the operator
semigroup of the original plant. We consider both the state feedback
and the output feedback regulator problem. In the latter case, the
measurement from the Schr\"odinger equation is taken at the boundary.
First we show that the open-loop system is well-posed. We design a
state feedback control law that solves the regulator problem by the
backstepping method. Then, a finite-dimensional reference observer and
an infinite-dimensional disturbance observer are designed.  Putting
these together, we obtain an output feedback controller with internal
loop that achieves output regulation.
\end{abstract}

\maketitle

\section{Introduction and problem formulation} 

\ \ \ The {\em regulator problem} is one of the fundamental issues in
control theory. It concerns tracking a reference signal $r$ with a
certain output $y$ of the plant, while rejecting a disturbance signal
$d$, where both $r$ and $d$ are generated by a marginally stable
finite-dimensional exosystem. In the {\em state feedback regulator
problem}, the controller has access to the state of the exosystem and
also to the state of the plant. In the {\em output feedback regulator
problem}, the controller has access to a measurement output $y_m$
(that may be different from $y$) and also to $r$. (This is related to
the more often encountered {\em error feedback regulator problem},
where the signal available to the controller is $e_y=r-y$.) It is also
required that the closed-loop system (not including the exosystem)
should be stable, in some suitable sense (e.g., exponentially).

The main approach to the output (or error) feedback regulator problem
is the {\em internal model principle} \cite{Davison1976tac,Fran:1976}.
Research in this branch of control theory has been active for over 30
years \cite{AsOr:03,Byrnes_2000, ImmPoh_2006,Isi:95,IsBy:90,JaWe:08,
KIF,PauPoh_2014,Viv2014tac,ReWe_2003}. The first results concerning
the regulator problem were developed for lumped parameter linear
systems, see \cite{Davison1976tac,Fran:1976}. These results were
extended to distributed parameter systems in \cite{Byrnes_2000}, where
the control and observation operators are bounded, and in
\cite{Viv2014tac,PauPoh_2014,ReWe_2003}, where the control
and observation operators are unbounded but admissible. In all these
references, the exosystem is assumed to be finite-dimensional, while
in \cite{Immonen2005tac,ImmPoh_2006,Paun_2017}, it is
infinite-dimensional. Another powerful method in dealing with the
regulator problem is the backstepping approach. In \cite{Deu2015auto},
the regulator problem for a boundary controlled parabolic PDEs is
solved using the backstepping approach. This method is again used for
the robust output regulation of parabolic PDEs in
\cite{Deu2015tac}. An interesting recent work is \cite{LBscl2015},
where based on backstepping, the output tracking problem is considered
for a general $2\times2$ system of first order linear hyperbolic PDEs,
but no disturbances are taken into consideration. Adaptive control is
used for output tracking for the Schr\"{o}dinger equation in
\cite{Liu_2018}, where the system is exponentially stable and the
disturbance acts at the boundary. For the optimal regularity, sharp
uniform decay rates and observability of Schr\"{o}dinger equations
in several space dimensions, we refer to the work of Irena Lasiecka
and collaborators \cite{LT1992DIE,LT2006JEE,LTZ2004I,LTZ2004II}.

We consider the following one-dimensional Schr\"{o}dinger equation
with Neumann boundary control and both distributed and boundary
disturbance, with $t\geq 0$: \vspace{-3mm}
\begin{equation} \label{Sch}
  \left\{\begin{array}{l} z_t(x,t) \m=\m -iz_{xx}(x,t) + h(x)z(x,t)
  + g(x)d_1(t),\quad 0<x<1,\crr\disp z_x(0,t) \m=\m -iqz(0,t) + d_2
  (t),\qquad z_x(1,t) \m=\m u(t),\crr\disp z(x,0) \m=\m z_0(x),\quad
  0\leq x\leq 1,\crr\disp y(t)\m=\m C_e[z(\cdot,t)],\qquad y_m(t)
  \m=\m z(1,t).\end{array}\right.
\end{equation}
We denote by $z'(x,t)$ or $z_x(x,t)$ the derivative of $z(x,t)$ with
respect to $x$ and by $\dot{z}(x,t)$ or $z_t(x,t)$ the derivative of
$z(x,t)$ with respect to $t$. $u(t)$ is the control input signal, $y$
is the output signal to be controlled, $y_m$ is the measurement (the
information available to the controller), $d_1(t),d_2(t)$ are the
disturbances, $z_0$ is the initial state, $q>0$ and $h,g\in C[0,1]$
are known. The system \dref{Sch} is a typical unmatched boundary
control problem: the control $u$ acts on one end of the domain and one
disturbance $d_2$ acts on the other end (while the other disturbance
$d_1$ acts distributed).

We consider the system \dref{Sch} in the energy state space
$\hline=L^2[0,1]$ with the usual inner product and norm. We will also
use the Sobolev spaces $H^1(0,1)$ and $H^2(0,1)$, with their usual
norms. If $z\in C([0,\infty),\hline)$, then instead of $[z(t)] (x)$ we
write $z(x,t)$. The observation operator $C_e$ in \dref{Sch} is a
bounded linear functional on $H^2(0,1)$ (not specified). We call $C_e$
{\em bounded} if it has a continuous extension to $\hline$ and {\em
unbounded} otherwise.

We will often need to refer to the {\em unperturbed system} (perhaps
not the best name) that is obtained from \dref{Sch} by setting $d_1(t)
=0$ (for all $t\geq 0$), and also $h(x)=0$ (for all $x\in[0,1]$):
\begin{equation} \label{Sch_unp}
  \left\{\begin{array}{l} z_t(x,t) \m=\m -iz_{xx}(x,t)
  \quad 0<x<1,\crr\disp z_x(0,t) \m=\m -iqz(0,t)+d_2(t),\qquad
  z_x(1,t) \m=\m u(t),\crr\disp z(x,0) \m=\m z_0(x),\quad
  0\leq x\leq 1,\crr\disp y(t)\m=\m C_e[z(\cdot,t)],\qquad y_m(t)
  \m=\m z(1,t).\end{array}\right.
\end{equation}

We introduce the operator $A$ as the generator of the operator
semigroup $\tline$ that describes the evolution of the state
$z(\cdot,t)$ of \dref{Sch_unp} in $\hline$ if the inputs are
$d_2=0$ and $u=0$: \vspace{-1mm}
\begin{equation} \label{A-def}
   Af \m=\m -if'',\quad D(A) \m=\m \{f\in H^2(0,1)\ |\
   f'(0) \m=\m -iqf(0),\ f'(1)=0\} \m.
\end{equation}
We shall investigate this semigroup in Lemma \ref{lem-A}. {\bf We
assume that $C_e$ (restricted to $D(A)$) is an admissible observation
operator for the operator semigroup $\tline$ generated by $A$.} The
concept of admissible observation operator will be recalled at the
beginning of Sect.~2.

For instance, the above assumption is true if $C_e$ is the sum of a
point observation operator and a distributed observation operator,
which means that \vspace{-3mm}
\begin{equation} \label{Cop-def}
   y(t) \m=\m C_e[z(\cdot,t)] \m=\m \theta z(x_0) + \int_0^1 c(x)
        z(x,t)\dd x,
\end{equation}
where $\theta\in\cline$, $x_0\in[0,1]$ and $c\in L^2[0,1]$
(the proof of this is similar to the proof of Lemma \ref{lem-A}).

A triple $(z,\sbm{d_2\\ u},y)$ is called a {\em classical
solution} of \dref{Sch_unp} on $[0,\infty)$ if:

  (a) $z\in C^1([0,\infty);\hline)$,

  (b) $d_2,u,y\in C[0,\infty)$,

  (c) $z(t)\in H^2(0,1)$ holds for all \m $t\geq 0$,

  (d) \dref{Sch_unp} holds for all \m $t\geq 0$.

The system \dref{Sch_unp} has many classical solutions. Indeed, we
show in Proposition \ref{Brighitte} that if $d_2,u\in H^1_{loc}(0,
\infty)$ and $z_0\in H^2(0,1)$ are such that $z_0'(0)=-iqz_0(0)+d_2
(0)$ and $z_0'(1)= u(0)$, then \dref{Sch_unp} has a corresponding
classical solution on $[0,\infty)$. A similar statent holds for
\dref{Sch}, see Corollary \ref{ProAB}. Moreover, the systems
\dref{Sch} and \dref{Sch_unp} are well-posed, see Proposition
\ref{keso_van}.

We suppose, as is common in regulator theory, that there exists a
linear system with no input, referred to as the {\em exosystem}
(sometimes called the exogenous system), that generates both the
disturbances $d_1,d_2$ and the reference $r$ (these are all scalar
signals): \vspace{-1mm}
\begin{equation} \label{dis-produce}
   \begin{array}{l} \dot{w}(t) \m=\m Sw(t),\ \ t>0,\;\; w(0)=w_0\in
   \rline^{n_w},\crr d_1(t) \m=\m p^{\top}_1 w(t)=q_{d_1}^{\top}w_d
   (t),\ \ t\geq 0,\crr d_2(t) \m=\m p^{\top}_2w(t) \m=\m q_{d_2}
   ^{\top}w_d(t),\ \ t\geq 0,\crr r(t) \m=\m p^{\top}_rw(t) \m=\m
   q_{r}^{\top}w_r(t),\ \ t\geq 0. \end{array}
\end{equation}
Here, $S$ is a block diagonal matrix $S={\rm diag}(S_d,S_r)$, which
leads with $w=\sbm{w_d\\ w_r}$ to the signal models
$\dot{w}_d=S_dw_d$, $w_d(0)=w_{d_0}\in\cline^{n_d}$, and
$\dot{w}_r=S_rw_r$, $w_r(0)=w_{r_0}\in\cline^{n_r}$, $n_d+n_r=n_w$.
Clearly $q_{d_1},q_{d_2}\in\cline^{n_d}$. {\bf We assume that $S$ is a
diagonalizable matrix, all its eigenvalues are on the imaginary axis,
the eigenvalues of $S_d$ are distinct and $(q^{\top}_r,S_r)$ is
observable.} The disturbances cannot be measured and the reference
signal is available to the controller.

{\bf Our objective} is to design an output feedback regulator such
that for all initial states of the systems \dref{Sch} and
\dref{dis-produce}, the following requirements are satisfied: (i) All
the internal signals are bounded. (ii) If the observation operator
$C_e$ is bounded, then we design a state feedback control law, using
the state $z(\cdot,t)$ of \dref{Sch} as well as the state $w(t)$ of
\dref{dis-produce}, such that the tracking error $e_y=y-r$ is
exponentially vanishing: there exist constants $m_0,\mu_0>0$ such
that \vspace{-1mm}
\begin{equation} \label{ey-to0}
   |e_y(t)| \m\leq\m m_0 e^{-\mu_0 t} \FORALL t\geq 0 \m.
\end{equation}
Based on this, we also design an output feedback controller, a
dynamical system with inputs $y_m(t)$ and $r(t)$, such that in the
closed-loop system, \dref{ey-to0} holds.

Alternatively, if $C_e$ is unbounded but admissible, then we design a
state feedback controller and an output feedback controller (with
internal loop), such that for some $\alpha<0$, \vspace{-2mm}
\begin{equation} \label{ourerr-def}
   e_y \m\in\m L_{\alpha}[0,\infty),
\end{equation}
where $L_{\alpha}[0,\infty)$ is a weighted function space defined by
\vspace{-1mm}
$$ L_{\alpha}^2[0,\infty) :=\m \bigg\{f\in L^2_{loc}[0,\infty) \
   \bigg| \int_0^\infty e^{-2\alpha t}|f(t)|^2 \dd t \m<\m \infty
   \bigg\}.$$
For the concept of stabilizing controller with internal loop we refer
to \cite{WeCu_97,CWW_2001}. Essentially it means that the controller
is well-posed and to create the well-posed and stable closed-loop
system, we have to close two feedback loops: one involving the plant
and the controller and another one (called the internal loop)
involving the controller only. Closing the internal loop on the
controller only (without the plant) may lead to a non-well-posed
system.

The outline of the paper is as follows: In Sect.~2 we give a bit of
mathematical background on compatible system nodes, admissibility and
well-posedness. In Sect.~3 we derive various properties of the
Schr\"odinger equation system \dref{Sch}, which we reformulate in the
operator theoretic language. In Sect.~4 we solve the state feedback
regulator problem, while using backstepping for the stabilization.
Sect.~5 is devoted to the design of an observer for the combined
system \dref{Sch} and \dref{dis-produce}, using again a backstepping
transformation. In Sect.~6, based on the estimated state from the
observer, we show how to solve the output feedback regulator problem.

\section{Some background on well-posed system nodes} 

\ \ \ In this section we recall some general facts on admissible
control and observation operators, compatible system nodes, classical
solutions, well-posedness, transfer functions, feedback and
closed-loop systems, following \cite{StWe_12}, \cite{obs_book},
\cite{TuWe_14_survey} and \cite{art12}. For a better understanding of
these topics and for the proofs, the reader is advised to look up the
mentioned references.

Let $X,U$ and $Y$ be Hilbert spaces, let $\tline_t$ be a strongly
continuous semigroup of operators on $X$ with generator $A$, let
$X_{1}$ be the space $D(A)$ with the norm $\|x\|_{1}=\|(\beta I-A)x
\|$ and let $X_{-1}$ be the completion of $X$ with respect to the norm
$\|x\|_{-1}=\|(\beta I-A)^{-1}x\|$, where $\beta$ is an arbitrary (but
fixed) element in the resolvent set $\rho(A)$. An operator
$B\in\Lscr(U,X_{-1})$ is called an {\em admissible control operator}
for $\tline$ if for some (hence, for every) $\tau>0$ and for every
$u\in L^2([0,\infty);U)$,
$$ \int_0^{\tau} \tline_{\tau-s} Bu(s) \dd s\in X.$$
In this case, for any $x_0\in X$ and any $u\in L^2_{loc}([0,\infty);
U)$ the equation $\dot x=Ax+Bu$ has a unique solution in $X_{-1}$
that satisfies $x(0)=x_0$, and moreover we have $x\in
C([0,\infty);X)$.

An operator $C\in\Lscr(X_1,Y)$ is an {\em admissible observation
operator} for $\tline$ if for some (hence, for every) $\tau>0$ there
exists $m_\tau>0$ such that
$$ \int_0^{\tau} \|C\tline_t x\|^2\dd t \m\leq\m m_{\tau}\|x\|^2
   \FORALL x\in D(A).$$
The $\L$-extension of an operator $C\in\Lscr(X_1,Y)$ (with respect
to $A$), denoted $\CL$, is defined as follows:
\begin{equation} \label{MillionDollars}
  \CL x \m=\m \lim_{\l\to\infty} C\l(\l I-A)^{-1} x
\end{equation}
and its domain $D(\CL)$ consists of those $x\in X$ for which the above
limit exists. In this case, by \cite[Proposition 4.3.6]{obs_book},
for every $x\in X$, the output $y(t)=\CL\tline_t x$ exists for
almost every $t\geq 0$ and
\begin{equation} \label{Eilat}
   y(t)=\CL\tline_t x \Rightarrow y\in L^2_{\alpha}([0,\infty);Y) \
   \mbox{ for all }\ \alpha>\o_{\tline} \m,
\end{equation}
where $\o_{\tline}$ is the growth bound of the semigroup $\tline$. We
have that $C$ is an admissible observation operator for $\tline$ if and
only if $C^*$ is an admissible control operator for $\tline^*$.

Let $U,X,Y$ and $A$ be as above, and let $B\in\Lscr(U,X_{-1})$.
We introduce the space \vspace{-2mm}
$$D(S)\m=\m\left\{\sbm{x\\ u}\in X\times U\ |\ Ax+Bu\in X\right\}\m.$$
We also define the space $Z\subset X$ that consists of all the vectors
$z\in X$ that can be the first component of a vector in $D(S)$:
\vspace{-1mm}
\begin{equation} \label{yellow_vests}
   Z \m=\m D(A) + (\beta I-A)^{-1} BU \m,
\end{equation}
which is independent of the choice of $\beta\in\rho(A)$. This is a
Hilbert space with the norm \vspace{-1mm}
$$ \|z\|_Z^2 \m=\m \inf\left\{ \|x\|_1^2 + \|v\|^2 \ |\ \text{$x\in
   X_1$, $v \in U$, $z = x + (\beta I-A)^{-1} Bv$} \right\} \m.$$

Let $C:D(C)\rarrow Y$ be such that $Z\subset D(C)$ and the restriction
of $C$ to $Z$ is in $\Lscr(Z,Y)$. Finally, let $D\in\Lscr(U,Y)$. Then
$(A,B,C,D)$ is called a {\em compatible system node} on $(U,X,Y)$. (We
mention that we took a short-cut here: in the cited references, and
several others, the more general and complicated concept of system
node is introduced first, and compatible system nodes are introduced
later as a special case. It is easy to show that our definition above
is equivalent to the one in \cite{StWe_12, TuWe_14_survey}. In the
cited references, the notation $\overline C$ appears instead of $C$,
and $C$ is $\overline C$ restricted to $D(A)$.)

To a compatible system node as above we associate its {\em system
operator} $S:D(S)\rarrow X\times Y$: \vspace{-2mm}
$$S \m=\m \bbm{A & B \\ C & D} \m.$$
The compatible system node is usually associated with the equation
\begin{equation} \label{node_evol}
  \bbm{\dot x(t)\\ y(t)} \m=\m S\bbm{x(t)\\ u(t)} \FORALL t\geq 0 \m,
\end{equation}
where $u,x$ and $y$ have the meaning of input, state and output
functions. $B$ and $C$ are called the {\em control operator} and the
{\em observation operator} of the system node, respectively.

In the spirit of \cite[Sect.~3]{StWe_12}, \cite[Sect.~4]
{TuWe_14_survey}, we define the following concept:

\begin{definition} \label{ClassSolDef} {\rm
Let $S$ be the system operator of a compatible system node $(A,B,C,
D)$ on $(U,X,Y)$. A triple $(x,u,y)$ is called a {\em classical
solution} of \dref{node_evol} on $[0,\infty)$ if:

  (a) $x\in C^1([0,\infty);X)$,

  (b) $u\in C([0,\infty);U)$, \ $y\in C([0,\infty);Y)$,

  (c) $\sbm{x(t)\\ u(t)}\in D(S)$ for all $t\geq 0$,

  (d) \dref{node_evol} holds.}
\end{definition}

\begin{proposition} \label{smooth_comp}
With the notation of the last definition, if \m $u\in C^2([0,
\infty);U)$ and $\sbm{x_0\\ u(0)}\in D(S)$\m, then the equation
{\rm\dref{node_evol}} has a unique classical solution $(x,u,y)$
satisfying $x(0)=x_0$.
\end{proposition}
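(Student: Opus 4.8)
The plan is to extract the only possible $x$ from the variation of constants formula, upgrade its regularity by applying the standard theory of inhomogeneous Cauchy problems on the extrapolation space $X_{-1}$ twice --- the second application being where the hypotheses $u\in C^2$ and $\sbm{x_0\\u(0)}\in D(S)$ get consumed --- and then read off $y$ and its continuity from the closedness of the system operator $S$. This is essentially the route of \cite[Sect.~3]{StWe_12} and \cite[Sect.~4]{TuWe_14_survey}. Throughout, I regard $\tline$ also on $X_{-1}$, where $A$ denotes the (extended) generator, whose domain in $X_{-1}$ is $X$.

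\emph{Uniqueness} is the easy half. If $(x,u,y)$ is any classical solution of \dref{node_evol} with $x(0)=x_0$, then by item (c) of Definition~\ref{ClassSolDef} one has $\sbm{x(s)\\u(s)}\in D(S)$, so the first row of \dref{node_evol} reads $\dot x(s)=Ax(s)+Bu(s)$ with both sides in $X$. Working in $X_{-1}$ this gives $\frac{\dd}{\dd s}\tline_{t-s}x(s)=\tline_{t-s}(\dot x(s)-Ax(s))=\tline_{t-s}Bu(s)$, and integrating from $0$ to $t$ yields $x(t)=\tline_t x_0+\int_0^t\tline_{t-s}Bu(s)\,\dd s$. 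Hence $x$ is determined by $x_0$ and $u$, and then so is $y(t)=Cx(t)+Du(t)$, since $x(t)\in Z\subset D(C)$.

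\emph{Existence.} I define $x$ by the same formula, $x(t):=\tline_t x_0+\int_0^t\tline_{t-s}Bu(s)\,\dd s$, a priori an $X_{-1}$-valued function. Viewing it as the mild solution on the Banach space $X_{-1}$ of $\dot x=Ax+Bu$ with initial state $x_0\in X$ (the domain of the $X_{-1}$-generator) and with inhomogeneity $Bu(\cdot)\in C^1([0,\infty);X_{-1})$, the classical regularity theorem for such equations gives $x\in C^1([0,\infty);X_{-1})$ with $x(t)\in X$ for all $t$, $x\in C([0,\infty);X)$, and $\dot x(t)=Ax(t)+Bu(t)$ in $X_{-1}$. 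Differentiating the variation of constants formula, $\dot x(t)=\tline_t(Ax_0+Bu(0))+\int_0^t\tline_{t-s}Bu'(s)\,\dd s$; that is, $\dot x=w$, where $w$ is the mild solution on $X_{-1}$ of $\dot w=Aw+Bu'$ with $w(0)=Ax_0+Bu(0)$. Here the hypotheses do their work: $w(0)=Ax_0+Bu(0)\in X$ because $\sbm{x_0\\u(0)}\in D(S)$, and $Bu'(\cdot)\in C^1([0,\infty);X_{-1})$ because $u\in C^2$, so the same regularity theorem applies once more and gives $w\in C([0,\infty);X)$. Therefore $\dot x=w\in C([0,\infty);X)$, i.e. $x\in C^1([0,\infty);X)$. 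To finish, $\dot x(t)=Ax(t)+Bu(t)$ in $X_{-1}$ together with $\dot x(t)=w(t)\in X$ forces $Ax(t)+Bu(t)\in X$, i.e. $\sbm{x(t)\\u(t)}\in D(S)$ for all $t\geq0$; in particular $x(t)\in Z\subset D(C)$, so we may set $y(t):=Cx(t)+Du(t)$, and then $\sbm{\dot x(t)\\y(t)}=\bbm{A&B\\C&D}\sbm{x(t)\\u(t)}=S\sbm{x(t)\\u(t)}$, which is item (d). Since $S$ is closed, $D(S)$ with its graph norm is a Hilbert space and $S\in\Lscr(D(S),X\times Y)$; because $x\in C([0,\infty);X)$, $u\in C([0,\infty);U)$ and $Ax(\cdot)+Bu(\cdot)=w(\cdot)\in C([0,\infty);X)$, the map $t\mapsto\sbm{x(t)\\u(t)}$ is continuous into $D(S)$, hence $S\sbm{x(\cdot)\\u(\cdot)}$ and in particular $y$ are continuous. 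Together with $x(0)=x_0$, this shows $(x,u,y)$ is the desired classical solution.

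\emph{Main obstacle.} The crux --- and the only point at which $u\in C^2$ and $\sbm{x_0\\u(0)}\in D(S)$ are genuinely needed --- is the passage from $x$ being merely $X_{-1}$-valued (or $X$-valued but possibly not $C^1$) to $\dot x$ being a bona fide $X$-valued continuous function, equivalently to $\sbm{x(t)\\u(t)}$ never leaving $D(S)$. Since $B$ is not assumed admissible here, this cannot come from integrability of $u$ alone; the extra derivative of $u$ is exactly what lets one iterate the inhomogeneous-Cauchy-problem argument on $X_{-1}$, now with initial datum $Ax_0+Bu(0)$, which the membership hypothesis places in $X$. Everything else is routine manipulation with the definitions of $Z$ and $D(S)$ and the closedness of $S$, using the background material on compatible system nodes recalled above.
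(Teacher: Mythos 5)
Your proof is correct: the uniqueness argument via the variation of constants formula in $X_{-1}$, and the existence argument by applying the classical regularity theory for inhomogeneous Cauchy problems on $X_{-1}$ twice (the second pass consuming $u\in C^2$ and $Ax_0+Bu(0)\in X$), is exactly the standard bootstrap used in the reference the paper cites for this result (\cite[Proposition 4.2.11]{obs_book}); the paper itself gives no proof beyond that citation. The closing step on the continuity of $y$ via the closedness of $S$ (equivalently, continuity of $t\mapsto x(t)$ into $Z$) is also sound.
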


For the proof we refer to Proposition 4.2.11 in \cite{obs_book} (it
also appears in various other references). Under the conditions of the
above proposition, we have \vspace{-1mm}
$$ x(t) \m=\m \tline_t\m x(0) + \int_0^t \tline_{t-\sigma}Bu(\sigma)
   \dd \sigma \FORALL t\geq 0 \m.$$

\begin{definition} \label{WellPosedDef} {\rm
With the notation of the previous definition, $(A,B,C,D)$ is {\em
well-posed} if for some (hence, for every) $\tau>0$ there is a
$K_\tau>0$ such that for every classical solution $(x,u,y)$ of
\dref{node_evol}, \vspace{-2mm}
$$ \|z(\tau)\|^2+\int_0^\tau|y(t)|^2 \dd t \m\leq\m K_\tau \left(
   \|z(0)\|^2+\int_0^\tau|u(t)|^2 \dd t \right) \m.$$
Here, $\tline$ is the operator semigroup generated by $A$.}
\end{definition}

We will use the term ``well-posed system node'' instead of the
cumbersome ``well-posed compatible system node''. There is a good
justification for this, see \cite[Proposition 4.5]{TuWe_14_survey}.

\begin{proposition} \label{Macron}
We use the notation of Definition {\rm\ref{ClassSolDef}} and we
denote again by $\tline$ the operator semigroup generated by $A$.

If $(A,B,C,D)$ is well-posed, then it follows that $B$ is an
admissible control operator for $\tline$, $C$ (restricted to $D(A)$)
is an admissible observation operator for $\tline$, and the
{\em transfer function} of $(A,B,C,D)$, defined by \vspace{-1mm}
\begin{equation} \label{Mattis_resigns}
   \GGG(s) \m=\m C(sI-A)^{-1}B+D \FORALL s\in\cline\
   \mbox{ with }\ \Re s>\o_\tline \m,
\end{equation}
is bounded on any half-plane $\cline_\alpha=\{s\in\cline\ |\ \Re s>
\alpha\}$, if $\alpha>\o_\tline$.

Conversely, if $B$ is an admissible control operator for $\tline$,
$C$ is an admissible observation operator for $\tline$ and $\GGG$ is
bounded on some right half-plane, then it follows that $(A,B,C,D)$
is well-posed.
\end{proposition}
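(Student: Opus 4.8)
The plan is to reduce everything to the standard theory of well-posed linear systems as presented in \cite{obs_book} and \cite{TuWe_14_survey}, where these equivalences are essentially known; the task here is to check that the definition of well-posedness via classical solutions (Definition \ref{WellPosedDef}) is strong enough to extract the three conclusions, and conversely that admissibility plus a bounded transfer function lets one build the estimate for all classical solutions. First I would address the forward implication. Assuming $(A,B,C,D)$ is well-posed, apply the defining inequality to judiciously chosen classical solutions. Taking $x_0 \in D(A)$ and $u \equiv 0$ (which satisfies the hypotheses of Proposition \ref{smooth_comp} since $\sbm{x_0\\ 0}\in D(S)$), the solution is $x(t)=\tline_t x_0$, $y(t)=C\tline_t x_0$, and the inequality collapses to $\int_0^\tau \|C\tline_t x_0\|^2\dd t \le K_\tau \|x_0\|^2$, which is exactly admissibility of $C$ for $\tline$. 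Dually, to get admissibility of $B$, take $x_0=0$ and $u\in C^2$ with $u(0)=0$; then $x(\tau)=\int_0^\tau \tline_{\tau-\sigma}Bu(\sigma)\dd\sigma$ by the formula after Proposition \ref{smooth_comp}, and the inequality gives $\|\int_0^\tau \tline_{\tau-\sigma}Bu(\sigma)\dd\sigma\|^2 \le K_\tau \int_0^\tau |u(t)|^2\dd t$; since such $u$ are dense in $L^2([0,\tau];U)$ and the map extends continuously, $B$ is an admissible control operator.

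For the transfer function bound in the forward direction, fix $s$ with $\Re s>\o_\tline$ and apply the inequality to the particular classical solution generated by the exponential input $u(t)=e^{st}u_0$ with $x_0=(sI-A)^{-1}Bu_0$ (one checks $\sbm{x_0\\ u_0}\in D(S)$, and the solution is $x(t)=e^{st}x_0$, $y(t)=e^{st}\GGG(s)u_0$ with $\GGG(s)$ as in \dref{Mattis_resigns}). Plugging into Definition \ref{WellPosedDef} over $[0,\tau]$ and using $\int_0^\tau e^{2\Re s\, t}\dd t \asymp e^{2\Re s\,\tau}$ for $\Re s$ bounded below, one obtains $|\GGG(s)u_0|^2 \le C_\alpha |u_0|^2$ uniformly for $\Re s \ge \alpha > \o_\tline$, i.e. boundedness of $\GGG$ on $\cline_\alpha$. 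Analyticity of $\GGG$ on $\cline_{\o_\tline}$ is immediate from the resolvent formula and the fact that $C$ restricted to $Z$ is bounded with $(sI-A)^{-1}BU\subset Z$.

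The converse is the part I expect to be the main obstacle, because it requires showing that admissibility of $B$ and $C$ together with boundedness of $\GGG$ on a right half-plane yields the energy estimate for \emph{every} classical solution, not just smooth ones. Here the strategy is: given a classical solution $(x,u,y)$ on $[0,\tau]$, use the variation of parameters formula $x(t)=\tline_t x_0 + \int_0^t \tline_{t-\sigma}Bu(\sigma)\dd\sigma$ (valid since $B$ is admissible), and the output representation $y = \CL(\text{state part}) + (\text{convolution by the impulse response})$; admissibility of $C$ controls the $\tline_t x_0$ contribution to both $\|x(\tau)\|$ and $\int_0^\tau\|y\|^2$, while the input-to-state and input-to-output maps are bounded on $L^2[0,\tau]$ precisely because $B$ is admissible and $\GGG$ is bounded on a half-plane (the latter giving, via the Paley--Wiener theorem, an $L^2$-bounded convolution operator). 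Assembling these pieces with the triangle inequality produces the constant $K_\tau$. The delicate point is matching the abstract input/output maps of well-posed-system theory with the pointwise-defined classical solutions of Definition \ref{ClassSolDef}: one must invoke that classical solutions are dense in, and consistent with, the $L^2$-well-posed trajectories, which is exactly the content of \cite[Proposition 4.5]{TuWe_14_survey} referenced just before this proposition. I would therefore cite \cite{obs_book}, \cite{StWe_12} and \cite{TuWe_14_survey} for the underlying well-posed-system machinery and present the proof as the above chain of specializations and estimates rather than redeveloping that machinery.
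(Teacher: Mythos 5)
The paper offers no proof of Proposition \ref{Macron}: it sits in the background Section~2, which explicitly sends the reader to \cite{StWe_12}, \cite{obs_book}, \cite{TuWe_14_survey} and \cite{art12} for proofs, so there is no in-paper argument to compare yours against. Your outline reproduces the standard argument from those references and is essentially sound. In the forward direction the three test solutions (zero input for admissibility of $C$; zero initial state with $C^2$ inputs vanishing at $0$, then density, for admissibility of $B$; the exponential trajectory $x(t)=e^{st}(sI-A)^{-1}Bu_0$, $y(t)=e^{st}\GGG(s)u_0$ for the transfer-function bound) are exactly the right ones, and each is a legitimate classical solution by Proposition \ref{smooth_comp}. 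Two small repairs are needed in the third step: the claim $\int_0^\tau e^{2\Re s\,t}\dd t\asymp e^{2\Re s\,\tau}$ fails near $\Re s=0$ and for $\Re s<0$ (which occurs when $\o_\tline<0$); what you actually need, and what suffices, is only a uniform lower bound $\int_0^\tau e^{2\Re s\,t}\dd t\geq c_\alpha>0$ for $\Re s\geq\alpha$. Moreover, after dropping the state term on the left, the right-hand side still contains $K_\tau\|(sI-A)^{-1}Bu_0\|^2$, so you must also invoke the resolvent estimate $\|(sI-A)^{-1}B\|\leq M(\Re s-\omega)^{-1/2}$ for $\Re s>\omega>\o_\tline$, which follows from the admissibility of $B$ you have just established (see \cite[Proposition 4.4.6]{obs_book}); only then is $|\GGG(s)u_0|$ bounded by a constant times $|u_0|$ uniformly on $\cline_\alpha$. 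For the converse you correctly identify the one genuinely nontrivial point: Paley--Wiener bounds the shift-invariant input-output map on $L^2_\gamma$, but one must still show that this map produces the pointwise output $y(t)=Cx(t)+Du(t)$ of an \emph{arbitrary} classical solution, not merely of the $C^2$ ones; deferring that consistency argument to \cite{TuWe_14_survey} is acceptable for a background statement, and is in effect what the paper itself does.
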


The following simple perturbation result will be useful.

\begin{proposition} \label{Trump}
Let $(A,B,C,D)$ be a well-posed system node on $(U,X,Y)$ and let
$P\in\Lscr(X)$. Then $(A+P,B,C,D)$ is again a well-posed system node
on $(U,X,Y)$.
\end{proposition}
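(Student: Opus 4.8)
The plan is to reduce everything to the well-posedness criterion in Proposition \ref{Macron}: it suffices to check that $B$ is an admissible control operator for the semigroup generated by $A+P$, that $C$ (restricted to $D(A+P)=D(A)$) is an admissible observation operator for it, and that the transfer function of $(A+P,B,C,D)$ is bounded on some right half-plane. Since $P\in\Lscr(X)$ is a bounded perturbation, $A+P$ generates a strongly continuous semigroup $\tline^P$ on $X$, and standard admissibility-under-bounded-perturbation results (see \cite{obs_book}) give that $B$ remains an admissible control operator for $\tline^P$ and $C$ remains an admissible observation operator for $\tline^P$. One should also note that $D(A+P)=D(A)$ and $X_{-1}$, $Z$ are unchanged (the graph norm of $A+P$ is equivalent to that of $A$, and $(\beta I-A-P)^{-1}BU$ spans the same subspace modulo $D(A)$), so $(A+P,B,C,D)$ is indeed a compatible system node with the same data spaces $Z\subset D(C)$ and the same $D\in\Lscr(U,Y)$.

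The remaining point, and the one that needs a short argument, is boundedness of the transfer function of $(A+P,B,C,D)$ on a right half-plane. I would write this transfer function, for $\Re s$ large, as
\begin{equation*}
  \GGG_P(s) \m=\m C(sI-A-P)^{-1}B + D \m,
\end{equation*}
and relate $(sI-A-P)^{-1}$ to $(sI-A)^{-1}$ via the second resolvent identity:
\begin{equation*}
  (sI-A-P)^{-1} \m=\m (sI-A)^{-1} + (sI-A-P)^{-1}P(sI-A)^{-1} \m.
\end{equation*}
Multiplying on the right by $B$ and on the left by $C$ gives
\begin{equation*}
  \GGG_P(s) \m=\m \GGG(s) + C(sI-A-P)^{-1}P(sI-A)^{-1}B \m.
\end{equation*}
Here $\GGG(s)$ is bounded on $\cline_\alpha$ for $\alpha>\o_\tline$ by Proposition \ref{Macron}. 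In the correction term, $(sI-A)^{-1}B$ maps $U$ into $X$ with norm $O(1/|s|)$ on half-planes $\Re s\geq\alpha>\o_\tline$ (this is a standard consequence of admissibility of $B$, e.g.\ $\|(sI-A)^{-1}B\|_{\Lscr(U,X)}$ is bounded, and in fact decays), $P\in\Lscr(X)$ is bounded, and $(sI-A-P)^{-1}\in\Lscr(X)$ is bounded on a right half-plane $\Re s\geq\alpha'$ with $\alpha'>\o_{\tline^P}$, while $C$ restricted to $X$ composed with a resolvent that lands in $D(A+P)=D(A)$ is controlled since $C$ is an admissible observation operator for $\tline^P$ (so $\|C(sI-A-P)^{-1}\|$ is bounded on such a half-plane). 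Hence the correction term is bounded on $\cline_{\alpha''}$ for $\alpha''$ large enough, and therefore so is $\GGG_P$.

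I expect the only mild obstacle to be bookkeeping about domains and the observation-operator bound: one must be slightly careful that $C$ appearing in $C(sI-A-P)^{-1}P(sI-A)^{-1}B$ is legitimately applied, i.e.\ that $(sI-A-P)^{-1}$ maps $X$ into $D(A+P)\subset D(C)$ (true since $Z\supset D(A)$ and the range of the resolvent is $D(A)$), and that the relevant operator norm $\|C(sI-A-P)^{-1}\|_{\Lscr(X,Y)}$ is indeed finite and uniformly bounded on a right half-plane — this is exactly the Laplace-transform characterization of admissibility of $C$ for $\tline^P$, so it is available once we invoke the bounded-perturbation invariance of admissibility. With these points in place, Proposition \ref{Macron} yields that $(A+P,B,C,D)$ is a well-posed system node, completing the proof.
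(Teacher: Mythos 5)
Your proof is correct and follows essentially the same route as the paper: bounded perturbation results preserve the admissibility of $B$ and $C$, the second resolvent identity shows that the two transfer functions differ by a term that is bounded on a right half-plane, and Proposition \ref{Macron} then gives well-posedness. The only (immaterial) difference is that you factor the correction term as $C(sI-A-P)^{-1}P(sI-A)^{-1}B$ while the paper uses the mirror-image $C(sI-A)^{-1}P(sI-A-P)^{-1}B$; both rest on the same boundedness facts for $C(sI-\cdot)^{-1}$ and $(sI-\cdot)^{-1}B$.
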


\begin{proof}
Assume that $(A,B,C,D)$ is well-posed, hence (according to the
previous proposition) $A$ and $B$ are admissible for $\tline$, the
semigroup generated by $A$. It follows from \cite[Theorem 5.4.2 and
Corollary 5.5.1]{obs_book} that $B$ and $C$ are admissible also for
the semigroup generated by $A+P$, and the spaces $X_1$ and $X_{-1}$
remain the same for $A+P$. According to Proposition \ref{Macron}, the
transfer function $\GGG$ from \dref{Mattis_resigns} is bounded on some
right half-plane.  Denoting the transfer function of the compatible
system node $(A+P,B,C,D)$ by $\GGG_P$, we have the elementary identity
\vspace{-1mm}
\begin{equation} \label{Gad_Hareli}
   \GGG_P(s)-\GGG(s) \m=\m C(sI-A)^{-1}P(sI-A-P)^{-1}B \m.
\end{equation}
The functions $C(sI-A)^{-1}$ and $(sI-A-P)^{-1}B$ are bounded on some
right half-plane, according to \cite[Theorem 4.3.7 and Proposition
4.4.6]{obs_book}. Thus, it follows that $\GGG_P$ is bounded on some
right half-plane. Now it follows from Proposition \ref{Macron} that
$(A+P,B,C,D)$ is well-posed.
\end{proof}

We mention that the above proposition remains valid for a time
varying $P:[0,\infty)\rarrow\Lscr(X)$, as long as it is strongly
continuous. This is much harder to prove, see
\cite[Theorems 4.2 and 5.3]{ChWe:15}.

We introduce a special class of well-posed systems, following the
terminology in \cite{TuWe_14_survey}, \cite{art12}, \cite{WeCu_97},
\cite{ZhWe:11} and many other papers. We do this because our systems
\dref{Sch} and \dref{Sch_unp} fall into this category (as we shall
see), and we will use tools developed for such systems.

\begin{definition} \label{regular} {\rm
Let $(A,B,C,D)$ be a well-posed system node on $(U,X,Y)$, with
transfer function $\GGG$ (see \dref{Mattis_resigns}). We say that this
system is {\em regular} if the limit $D_0 v=\lim_{\l\rarrow\infty,\m\l
\in\rline}\GGG(\l)v$ exists, for each $v\in U$. In this case,
$D_0\in\Lscr(U,Y)$ is called the {\em feedthrough operator} of the
system.}
\end{definition}

\begin{proposition} \label{Savta}
Suppose that the compatible system node $(A,B,C,D)$ on $(U,X,Y)$
is well-posed, and let $\GGG$ be its transfer function. Recall $\CL$
from {\rm\dref{MillionDollars}} and the space $Z$ introduced in {\rm
\dref{yellow_vests}}. We have \m $Z\subset D(\CL)$ if and only if the
system is regular.

If the system is regular, then the quadruple $(A,B,C,D)$ may be
replaced with the equivalent quadruple $(A,B,\CL,D_0)$ (where $D_0$ is
the feedthrough operator of the system), in the sense that this new
quadruple has the same system operator $S$ and the same transfer
function.
\end{proposition}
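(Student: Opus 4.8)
The plan is to reduce both assertions to the behaviour of the resolvent applied to vectors of the form $(\beta I - A)^{-1}Bv$, since $D(A)$ is already contained in $D(\CL)$ (this is automatic: for $x\in D(A)$ we have $\l(\l I-A)^{-1}x\to x$ and $Cx$ makes sense, so the $\L$-limit in \dref{MillionDollars} is just $Cx$). Thus $Z\subset D(\CL)$ is equivalent to the statement that $\lim_{\l\to\infty}C\l(\l I-A)^{-1}(\beta I-A)^{-1}Bv$ exists for every $v\in U$. First I would rewrite $\l(\l I-A)^{-1}(\beta I-A)^{-1} = (\beta I-A)^{-1}\l(\l I-A)^{-1}$ and use the resolvent identity $\l(\l I-A)^{-1} = I + A(\l I-A)^{-1}$ together with $(\beta I-A)^{-1}B = $ the known ``smooth'' lift, to massage $C\l(\l I-A)^{-1}(\beta I-A)^{-1}Bv$ into a form that, modulo a term converging to the fixed vector $C(\beta I-A)^{-1}Bv$, looks like $C(\l I-A)^{-1}Bv$ up to bounded corrections. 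The upshot I am aiming for is the identity (valid for real $\l$ large)
$$
   \CL\text{-type limit of } C\l(\l I-A)^{-1}(\beta I-A)^{-1}Bv
   \ \text{exists}\quad\Longleftrightarrow\quad \lim_{\l\to\infty}\GGG(\l)v \ \text{exists},
$$
because $\GGG(\l)v - D v = C(\l I-A)^{-1}Bv$ and the two limits differ only by terms that manifestly converge (one of them using that $C$ restricted to $Z$ is bounded and $(\beta I-A)^{-1}Bv\in Z$, another using admissibility of $B$ so that $\l(\l I-A)^{-1}B$ stays controlled). This gives the equivalence ``$Z\subset D(\CL)$ iff regular.''

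For the second assertion, assume regularity. I must check that $(A,B,\CL,D_0)$ is again a compatible system node with the same system operator and the same transfer function. That $\CL$ restricted to $Z$ agrees with the original $C$ on $Z$ follows from the first part (for $z\in Z$ the defining limit \dref{MillionDollars} exists and one shows it equals $Cz$ by splitting $z = x + (\beta I-A)^{-1}Bv$ and handling the two pieces as above). Hence $D(S)$, and the action of $S$ on $D(S)$, are unchanged when $C$ is replaced by $\CL$ and $D$ by $D_0$, provided we also check that the feedthrough term is consistent: but $D_0 v = \lim_\l \GGG(\l)v = \lim_\l [C(\l I-A)^{-1}Bv + Dv]$, and since $(\l I-A)^{-1}Bv \to 0$ in $X$ while $C$ on $Z$ is bounded and $(\l I-A)^{-1}Bv$ need not lie in $Z$ — here one instead notes that on $D(S)$ the output $y = Cx + Du$ is reproduced by $\CL x + D_0 u$ because $\CL x = Cx$ for $x\in Z$, and the new feedthrough absorbs exactly the discrepancy so that $S$ is literally the same operator. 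Finally the transfer function is unchanged because $\GGG(s) = C(sI-A)^{-1}B + D = \CL(sI-A)^{-1}B + D_0$ for $\Re s$ large: indeed $(sI-A)^{-1}Bv \in Z$ so $\CL(sI-A)^{-1}Bv = C(sI-A)^{-1}Bv$, and then one verifies $D - D_0 = \lim_{\l\to\infty}[\GGG(\l) - \GGG(\l)] = 0$ is not quite the right bookkeeping — rather $D_0 = D + \lim_\l C(\l I - A)^{-1}B$, and substituting back shows the two expressions for $\GGG(s)$ coincide.

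The main obstacle, and the step I would spend the most care on, is the interchange-of-limits argument establishing the equivalence: one needs a clean decomposition of $\l(\l I - A)^{-1}(\beta I - A)^{-1}Bv$ that separates a piece converging in $Z$ (so that $C$ applied to it converges, using boundedness of $C|_Z$) from a piece of the form $C(\l I-A)^{-1}Bv$ (which is $\GGG(\l)v - Dv$) plus a harmless remainder. I expect this to follow from the resolvent identity and the fact, recorded in \cite{obs_book}, that $(\beta I - A)^{-1}B\in\Lscr(U,Z)$ and that $\l(\l I-A)^{-1}$ is uniformly bounded on $X$ for large real $\l$; but getting the remainder to vanish in the norm of $X$ (not just weakly) is the delicate point. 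I would also double-check the claim $Z\subset D(\CL)$ does not already force $\CL$ to be continuous on $Z$ — it does, by the closed graph theorem applied to the resolvent limit, which is the observation that makes ``replace $C$ by $\CL$'' legitimate without changing anything.
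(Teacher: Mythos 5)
Your first assertion ($Z\subset D(\CL)$ iff regular) is argued along the right lines, and in fact the argument is cleaner than you fear: since $D(A)\subset D(\CL)$ with $\CL|_{D(A)}=C|_{D(A)}$, everything reduces to the vectors $(\beta I-A)^{-1}Bv$, and the resolvent identity gives the \emph{exact} formula
$$ C\l(\l I-A)^{-1}(\beta I-A)^{-1}Bv \m=\m \frac{\l}{\l-\beta}\left[C(\beta I-A)^{-1}Bv-C(\l I-A)^{-1}Bv\right], $$
in which every argument of $C$ lies in $Z$ (note that $(\l I-A)^{-1}Bv\in Z$ because $Z$ is independent of the resolvent parameter — your parenthetical to the contrary is wrong). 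Since $C(\l I-A)^{-1}Bv=\GGG(\l)v-Dv$, the limit on the left exists iff $\lim_{\l\to\infty}\GGG(\l)v$ exists; there is no remainder to estimate and no interchange of limits, so the ``delicate point'' you flag does not arise. (The paper does not prove this proposition; it is recalled from the cited references, where this is the standard argument.)

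The second assertion is where your proposal genuinely breaks: the claim that $\CL$ restricted to $Z$ agrees with $C$ on $Z$ is false in general. The same computation yields $\CL(\beta I-A)^{-1}Bv=C(\beta I-A)^{-1}Bv-(D_0-D)v$, and $D_0-D=\lim_{\l\to\infty}C(\l I-A)^{-1}B$ need not vanish; the paper itself makes exactly this point in the proof of Proposition \ref{keso_van}, where it notes that $C_{e\L}|_Z$ may differ from $C_e|_Z$. Your write-up simultaneously asserts $\CL|_Z=C|_Z$, that $D_0\neq D$, and that the two quadruples have the same transfer function — these are mutually inconsistent, since they would force $\CL(sI-A)^{-1}B+D_0$ and $C(sI-A)^{-1}B+D$ to differ by $D_0-D\neq 0$. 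The correct bookkeeping is: $\CL x=Cx$ only for $x\in D(A)$, the defect $-(D_0-D)v$ appears precisely on the complementary part $(\beta I-A)^{-1}BU$ of $Z$, and it is cancelled by replacing $D$ with $D_0$. Indeed, for $\sbm{x\\ u}\in D(S)$ one writes $x=x_0+(\beta I-A)^{-1}Bu$ with $x_0\in D(A)$ and gets $\CL x+D_0u=Cx_0+C(\beta I-A)^{-1}Bu-(D_0-D)u+D_0u=Cx+Du$, so the system operators coincide; taking $x=(sI-A)^{-1}Bu$ gives the equality of transfer functions.
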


The following proposition recalls some properties of output feedback
for regular linear systems (for the proof see \cite{art12}). In the
proposition we make the simplifying assumption $\Kf D=0$ (true in our
application in Sect. 4) that greatly simplifies the formulas.

\begin{proposition} \label{Ruth}
Let $(A,B,C,D)$ be a regular linear system on $(U,X,Y)$, with transfer
function $\GGG$. Assume that the feedthrough operator of this system
is $D$, and let $\Kf\in\Lscr(Y,U)$. We assume that the function
$I-\Kf\GGG(s)$ has a uniformly bounded inverse for all $s$ in some
right half-plane, and $\Kf D=0$. Then $(A_{cl},B,(I+D\Kf)\CL,D)$ is a
regular linear system on $(U,X,Y)$, called the {\em closed-loop
system} corresponding to $(A,B,C,D)$ with the output feedback operator
$\Kf$. Here \vspace{-1mm}
$$ A_{cl} \m=\m A+B\Kf\CL \m,\qquad D(A_{cl}) \m=\m \left\{x\in Z\ |\
   Ax+B\Kf\CL x\in X \m\right\} \m.$$
(The sum \m $Ax+B\Kf\CL x$ is computed in $X_{-1}$.) In particular,
$(I+D\Kf)\CL$ is an admissible observation operator for the
semigroup generated by $A_{cl}$.
\end{proposition}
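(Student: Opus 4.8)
The plan is to deduce the statement from the general output feedback theorem for regular linear systems in \cite{art12}, and then to simplify the resulting generating operators and transfer function using the extra hypothesis $\Kf D=0$.

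First I recall that, for a regular linear system whose generating operators are $A$, $B$, $\CL$ and feedthrough $D$ with transfer function $\GGG$, an operator $\Kf\in\Lscr(Y,U)$ qualifies as an \emph{admissible feedback operator} precisely when $I-\Kf\GGG(s)$ has a uniformly bounded inverse on some right half-plane, which is exactly the hypothesis we are given. Under this hypothesis the general theorem produces a regular closed-loop system whose generating operators are
$$ A^{\Kf} \m=\m A+B(I-\Kf D)^{-1}\Kf\CL \m,\qquad B^{\Kf} \m=\m B(I-\Kf D)^{-1} \m,$$
$$ C^{\Kf} \m=\m (I-D\Kf)^{-1}\CL \m,\qquad D^{\Kf} \m=\m (I-D\Kf)^{-1}D \m,$$
with domain $D(A^{\Kf})=\{x\in Z\mid A^{\Kf}x\in X\}$ (the sum being computed in $X_{-1}$), and whose transfer function is $\GGG^{\Kf}(s)=\GGG(s)(I-\Kf\GGG(s))^{-1}$; moreover $B^{\Kf}$ is admissible for the semigroup generated by $A^{\Kf}$, the Lebesgue extension $C^{\Kf}$ is admissible for it, and $\GGG^{\Kf}$ is bounded on some right half-plane.

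Next I would specialize to $\Kf D=0$. Then $(I-\Kf D)^{-1}=I$, while $D\Kf$ is nilpotent since $(D\Kf)^2=D(\Kf D)\Kf=0$, so $(I-D\Kf)^{-1}=I+D\Kf$. Substituting these and using $\Kf D=0$ repeatedly gives $B^{\Kf}=B$, $D^{\Kf}=(I+D\Kf)D=D$, $C^{\Kf}=(I+D\Kf)\CL$, and $A^{\Kf}=A+B\Kf\CL$, with the domain reducing to $\{x\in Z\mid Ax+B\Kf\CL x\in X\}$ as claimed; regularity of the closed loop, i.e.\ $\GGG^{\Kf}(\l)\to D$ as $\l\to\infty$ along $\rline$, follows from $\GGG(\l)\to D$ together with $\Kf D=0$. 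This already yields the full statement, including the admissibility of $(I+D\Kf)\CL$ for the semigroup generated by $A_{cl}$.

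If one preferred a self-contained argument in the compatible-system-node framework of Section~2, I would instead: (i) check directly that $\GGG^{\Kf}=\GGG(I-\Kf\GGG)^{-1}$ is bounded on a right half-plane, using the boundedness of $\GGG$ (Proposition~\dref{Macron}) and of $(I-\Kf\GGG)^{-1}$; (ii) write the candidate resolvent $(sI-A_{cl})^{-1}=(sI-A)^{-1}+(sI-A)^{-1}B(I-\Kf\GGG(s))^{-1}\Kf\CL(sI-A)^{-1}$, verify that it satisfies the resolvent identity and has trivial kernel, and conclude through Hille--Yosida-type estimates that $A_{cl}$ generates a strongly continuous semigroup $\tline^{cl}$; (iii) show that the input-to-state and state-to-output maps of the closed loop factor through the open-loop maps composed with the bounded shift-invariant operator on $L^2_\alpha$ induced by $(I-\Kf\GGG)^{-1}$, which yields admissibility of $B$ and of $(I+D\Kf)\CL$ for $\tline^{cl}$; and then (iv) invoke Proposition~\dref{Macron} for well-posedness, step~(i) for regularity, and Proposition~\dref{Savta} for the compatible-system-node structure, in particular $Z_{cl}\subset D((I+D\Kf)\CL)$. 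I expect steps (ii)--(iii) of this self-contained route to be the main obstacle, since establishing that $A_{cl}$ generates a semigroup and that $B$ and $(I+D\Kf)\CL$ remain admissible for it requires careful computation in $X_{-1}$ and the Lebesgue-extension calculus; this is precisely why the economical route is to quote \cite{art12}.
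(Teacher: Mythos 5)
Your proposal is correct and follows the same route the paper intends: the paper offers no proof of its own but simply refers to \cite{art12}, and you invoke exactly that general output feedback theorem and then carry out the (routine but worth recording) algebraic specialization under $\Kf D=0$, correctly obtaining $(I-\Kf D)^{-1}=I$, $(I-D\Kf)^{-1}=I+D\Kf$, and hence $B^{\Kf}=B$, $D^{\Kf}=D$, $C^{\Kf}=(I+D\Kf)\CL$ and $A_{cl}=A+B\Kf\CL$. The self-contained alternative you sketch is not needed here, and you rightly identify the semigroup generation and admissibility steps as the hard part that \cite{art12} is being quoted to avoid.
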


Intuitively, the closed-loop system $(A_{cl},B,(I+D\Kf)\CL,D)$ is
obtained from the original system $(A,B,C,D)$ via the output feedback
$u=\Kf\m y+ \rho$ (where $\rho$ is the new input function). The
transfer function of the closed-loop system is \m $\GGG_{cl}=\GGG
(I-\Kf\GGG)^{-1}=(I-\GGG\Kf)^{-1}\GGG$.

Let $\GGG$ be a function defined on some domain in $\cline$ that
contains a right half-plane, with values in a normed space.
Following \cite{ZhWe:11}, we say that $\GGG$ is {\em strictly proper}
if \vspace{-1mm}
$$ \lim_{\Re s\rarrow\infty} \|\GGG(s)\| \m=\m 0 \m, \  \mbox{
   uniformly with respect to }\ \Im s \m.$$
In other words, there exists an $\alpha\in\rline$ and a continuous
function $\beta:(\alpha,\infty)\rarrow(0,\infty)$ such that
\vspace{-1mm}
\begin{equation} \label{Raimar}
   \|\GGG(s)\| \m\leq\m \beta(\Re s) \FORALL s\in\cline_\alpha \
   \mbox{ and }\ \lim_{\xi\rarrow\infty}\beta(\xi) \m=\m 0 \m.
\end{equation}
The notation $\cline_\alpha$ has been introduced in Proposition
\ref{Macron}. The above concept generalizes the well-known one of
strictly proper rational transfer function. A well-posed system node
is called {\em strictly proper} if its transfer function is strictly
proper. Clearly such systems are regular and their feedthrough
operator is zero.

The following proposition shows a curious property of certain
semigroup generators $A$: if $B$, $C$ and $D$ are such that
$(A,B,C,D)$ is a compatible system node, then the admissibility of $B$
and $C$ for the semigroup generated by $A$ implies the well-posedness
of $(A,B,C,D)$. Moreover, it turns out that the compatible system
node $(A,B,\CL,0)$ is strictly proper.

\begin{proposition} \label{Gail}
Let $X=l^2$, $a>0$ and let the operator $A:D(A)\rarrow X$ be defined
on sequences $x=(x_k)$ ($k\in\nline$) by \vspace{-2mm}
$$ \m\ \ \ \ \ (Ax)_k \m=\m ia k^2 x_k \m,\qquad D(A) \m=\m \left\{ x
   \in l^2\ \left|\ \sum_{k\in\nline} k^4 |x_k|^2 < \infty \right.
   \right\} \m.\vspace{-1mm}$$
Then $A$ is the generator of the diagonal unitary operator group
\vspace{-1mm}
$$(\tline_t x)_k \m=\m e^{ia k^2 t} x_k \FORALL x\in X,\ t\geq 0 \m.$$

Let $B\in X_{-1}$ be an admissible control operator for $\tline$ (for
the input space $\cline$) and let the bounded linear functional $C:X_1
\rarrow\cline$ be an admissible observation operator for $\tline$ (for
the output space $\cline$).

Then $(A,B,\CL,0)$ is a compatible system node that is well-posed and
strictly proper.
\end{proposition}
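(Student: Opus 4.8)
The plan is to reduce the whole statement to a single estimate: the strict properness of the transfer function $\GGG(s)=C(sI-A)^{-1}B$ (cf. \dref{Mattis_resigns}, with $D=0$). Indeed, $B$ and $C$ are admissible for $\tline$ by hypothesis, so once we know that $\GGG$ is bounded on a right half-plane, Proposition~\ref{Macron} gives that $(A,B,\CL,0)$ is a well-posed system node; if moreover $\GGG$ is strictly proper in the sense of \dref{Raimar}, then the system is regular with feedthrough operator $0$ (Definition~\ref{regular}), hence $Z\subset D(\CL)$ by Proposition~\ref{Savta}, so $(A,B,\CL,0)$ is genuinely a compatible system node, and it is strictly proper by definition. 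So everything comes down to estimating $\GGG$, and for this we exploit that $A$ is diagonal.

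Write $B=(b_k)_{k\in\nline}\in X_{-1}$ and put $c_k:=Ce_k$, where $(e_k)$ is the standard basis of $X=l^2$ (note $e_k\in D(A)$). Then $(sI-A)^{-1}B=\bigl(b_k/(s-iak^2)\bigr)_k$. The first step is to observe that admissibility forces $(b_k)$ and $(c_k)$ to be bounded sequences. Testing the admissibility inequality for $C$ on the single vector $e_j$ gives $\tau\,|c_j|^2=\int_0^\tau\|C\tline_t e_j\|^2\dd t\le m_\tau$, so $\sup_j|c_j|\le\sqrt{m_\tau/\tau}$; applying the same argument to $B^*$ — which is an admissible observation operator for $\tline^*$, the diagonal unitary group generated by $A^*=-A$ — gives $\sup_j|b_j|<\infty$. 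Consequently the series $\sum_{k\in\nline}c_kb_k/(s-iak^2)$ converges absolutely whenever $\Re s>0$, and there it represents $\GGG(s)$.

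The main step is then the uniform bound, for all $\sigma>0$ and $\o\in\rline$,
\[ \bigl|\GGG(\sigma+i\o)\bigr|\m\le\m\Bigl(\sup_k|b_k|\Bigr)\Bigl(\sup_k|c_k|\Bigr)\sum_{k\in\nline}\frac{1}{\sqrt{\sigma^2+(ak^2-\o)^2}}. \]
I would bound this sum by comparison with an integral: the function $x\mapsto\bigl(\sigma^2+(ax^2-\o)^2\bigr)^{-1/2}$ is unimodal on $[0,\infty)$ with maximum at most $1/\sigma$, so the sum is bounded by a constant multiple of $\sigma^{-1}$ plus $2\int_0^\infty\bigl(\sigma^2+(ax^2-\o)^2\bigr)^{-1/2}\dd x$, and the substitution $x=\sqrt{\sigma/a}\,t$ converts the integral into $(a\sigma)^{-1/2}\,I(\o/\sigma)$ with $I(\nu):=\int_0^\infty\bigl(1+(t^2-\nu)^2\bigr)^{-1/2}\dd t$. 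The crux — and the step I expect to require the most care — is to show $I(\nu)\le C$ with a constant $C$ independent of $\nu\ge0$, the delicate region being the ``resonance'' near $t=\sqrt\nu$; this I would handle by splitting the integral at $|t-\sqrt\nu|=1$ (the near part is at most $2$, since the integrand is $\le1$ there) and estimating the far part via $1/|t^2-\nu|=1/\bigl(|t-\sqrt\nu|\,(t+\sqrt\nu)\bigr)$, together with the crude bound $t^2-\nu\ge t^2/2$ for $t\ge\sqrt{2\nu}$, which produces an elementary (logarithmic) bound that stays bounded in $\nu$. It then follows that $\sum_{k\in\nline}\bigl(\sigma^2+(ak^2-\o)^2\bigr)^{-1/2}=O\bigl((a\sigma)^{-1/2}\bigr)$, hence $\sup_{\o\in\rline}|\GGG(\sigma+i\o)|\to0$ as $\sigma\to\infty$; that is, $\GGG$ is strictly proper.

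Finally I would assemble the pieces. Strict properness of $\GGG$ gives in particular that $\GGG$ is bounded on a right half-plane; combined with the admissibility of $B$ and $C$, Proposition~\ref{Macron} shows that $(A,B,\CL,0)$ is a well-posed system node, and since its transfer function $\GGG$ is strictly proper, this system node is strictly proper (and, as noted in the first paragraph, it is then a compatible system node with feedthrough operator $0$), which is the assertion.
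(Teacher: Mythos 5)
Your proof is correct, and at the top level it follows the same strategy as the paper: reduce everything to showing that the transfer function $\GGG(s)=\CL(sI-A)^{-1}B$ tends to zero uniformly in $\Im s$ as $\Re s\rarrow\infty$, using the boundedness of the sequences $(b_k)$ and $(c_k)$ forced by admissibility. The differences are in the two technical steps. For the boundedness of $(b_k),(c_k)$ you test the admissibility inequality directly on the basis vectors $e_j$ (and on $\tline^*$ for $B$), which is more elementary than the paper's appeal to the Carleson measure criterion and equally valid. For the central estimate you bound $\sum_k(\sigma^2+(ak^2-\o)^2)^{-1/2}$ by an integral comparison, rescale by $x=\sqrt{\sigma/a}\,t$, and reduce to the uniform boundedness of $I(\nu)=\int_0^\infty(1+(t^2-\nu)^2)^{-1/2}\dd t$, handling the resonance at $t=\sqrt{\nu}$ by splitting at $|t-\sqrt{\nu}|=1$; your sketch of this is adequate and the resulting bound is $O(\sigma^{-1/2})$. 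The paper instead uses $|\tilde a+i\tilde b|\geq(|\tilde a|+|\tilde b|)/\sqrt{2}$ to reduce to the purely discrete sum $\sum_k 1/(\theta+|\mu-k^2|)$ and then kills the resonance with the discrete change of variable $j=k_\mu-k$, arriving at $2\sum_k 1/(\theta+k^2)=O(\theta^{-1/2})$. Both routes give the same decay rate; the paper's is shorter and avoids any integral estimates, while yours is more robust (it would survive, say, replacing $k^2$ by a more general monotone sequence). One small point to tighten: you invoke Proposition \ref{Savta} to get $Z\subset D(\CL)$, but that proposition presupposes a well-posed compatible system node, so the logically prior fact $(sI-A)^{-1}B\in D(\CL)$ should be extracted directly from the absolute convergence of $\sum_k b_kc_k/(s-iak^2)$ (via dominated convergence in the limit \dref{MillionDollars}, using $|\l/(\l-iak^2)|\leq 1$), which is exactly what the paper does and what your second paragraph already contains in substance.
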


\begin{proof}
The fact that $A$ generates the indicated operator group $\tline$ is
easy and standard material in semigroup theory, see e.g. \cite
[Proposition 2.6.5]{obs_book}. Let $\left\{e_1,e_2,e_3,\ldots
\right\}$ be the standard orthonormal basis of $l^2$. We denote by
$b_k$ and $c_k$ the components of $B$ and $C$, respectively:
\vspace{-1mm}
$$ b_k \m=\m \langle B,e_k\rangle, \qquad c_k \m=\m C e_k \FORALL
   k\in\nline \m.\vspace{-1mm}$$
It follows from the Carleson measure criterion for admissibility
(see e.g. \cite[Proposition 5.3.5]{obs_book}) that the sequences
$(b_k)$ and $(c_k)$ are bounded. We want to check that for some
(hence for every) $s\in\cline_0$ we have $(sI-A)^{-1}B\in D(\CL)$.
For this, we compute \vspace{-1mm}
$$ \begin{array}{l}\disp\lim_{\l\rarrow\infty} C\l(\l I-A)^{-1}
   (sI-A)^{-1}B \crr\disp \m=\m \lim_{\l\rarrow\infty} 
   \sum_{k\in\nline} \frac{b_k c_k \l}{(\l-iak^2)(s-iak^2)} \m=\m 
   \sum_{k\in\nline} \frac{b_k c_k}{s-iak^2} \m.\end{array}$$
This shows that indeed $(sI-A)^{-1}B\subset D(\CL)$, which implies
that $Z\subset D(\CL)$, and \vspace{-2mm}
$$ \CL(sI-A)^{-1}B \m=\m \sum_{k\in\nline} \frac{b_k c_k}{s-iak^2}
   \m.\vspace{-1mm}$$
Hence, for any $s\in\cline_0$ we have, denoting $\theta=\Re s/a$ and
$\o=\Im s$, \vspace{-1mm}
$$ |\CL(sI-A)^{-1}B| \m\leq\m \sum_{k\in\nline} |b_k c_k| \frac{1}
   {\left|\theta a+i(\o-ak^2)\right|} \m.\vspace{-2mm}$$
Using the elementary inequality $|\tilde a+i\tilde b|\geq(|\tilde a|+
|\tilde b|)/\sqrt{2}$ \ (for any $\tilde a,\tilde b\in\rline$),
we get \vspace{-1mm}
\begin{equation} \label{Helen}
  |\CL(sI-A)^{-1}B| \m\leq\m m\sqrt{2} \sum_{k\in\nline} \frac{1}
  {\theta a+|\o-ak^2|} \m=\m \frac{m\sqrt{2}}{a} \sum_{k\in\nline}
  \frac{1}{\theta+|\mu-k^2|} \m,\vspace{-1mm}
\end{equation}
where $m=\sup|b_k c_k|$ and $\mu=\o/a$. Considering the case $\mu\leq
0$, we get \vspace{-1mm}
\begin{equation} \label{Damascus_airport}
  |\CL(sI-A)^{-1}B| \m\leq\m \frac{m\sqrt{2}}{a} \sum_{k\in\nline}
  \frac{1}{\theta+k^2} \ \mbox{ for }\ \Re s=\theta a \m,\ \
  \Im s < 0 \m.\vspace{-2mm}
\end{equation}
Now consider the case $\mu>0$, and denote by $k_\mu$ the largest
integer $k$ satisfying $k^2\leq\mu$. We decompose \vspace{-2mm}
\begin{equation} \label{Gatwick}
  \sum_{k\in\nline} \frac{1}{\theta+|\mu-k^2|} \m=\m
  \sum_{1\leq k\leq k_\mu} \frac{1}{\theta+\mu-k^2} +
  \sum_{k> k_\mu} \frac{1}{\theta+k^2-\mu} \m.
\end{equation}
It is very easy to see that the second sum on the right side above is
bounded by \m $\sum_{k\in\nline} 1/(\theta+k^2)$. For the first sum
we do the change of discrete variables $j=k_\mu-k$, obtaining
$$ \begin{array}{l}\disp\sum_{1\leq k\leq k_\mu} \frac{1}{\theta+
   \mu-k^2} \m=\m \sum_{1\leq j\leq k_\mu} \frac{1}{\theta+\mu
   - k_\mu^2 + 2k_\mu j-j^2}\crr\disp
   \m\leq\m \sum_{1\leq j\leq k_\mu} \frac{1}{\theta+2k_\mu j-j^2}
   \m\leq\m \sum_{1\leq j\leq k_\mu} \frac{1}{\theta+j^2} \m.
   \end{array}$$
Combining this with our earlier estimate for the second sum in
\dref{Gatwick}, it follows that \vspace{-2mm}
$$ \sum_{k\in\nline} \frac{1}{\theta+|\mu-k^2|} \m\leq\m 2
   \sum_{k\in\nline} \frac{1}{\theta+k^2} \ \mbox{ for }\ \mu>0 \m.$$
This, together with \dref{Helen} and \dref{Damascus_airport} implies
that, for any $\theta>0$, \vspace{-1mm}
\begin{equation} \label{Dimona}
  |\CL(sI-A)^{-1}B| \m\leq\m \frac{2\sqrt{2}m}{a} \sum_{k\in\nline}
  \frac{1}{\theta+k^2} \ \mbox{ for }\ \Re s=\theta a \m.
\end{equation}
If we denote the righ-hand side of \dref{Dimona} with $\beta(\Re s)$
and compare with \dref{Raimar}, we see that $\CL(sI-A)^{-1}B$ is
strictly proper. In particular, this transfer function is bounded on
any half-plane $\cline_\alpha$ with $\alpha>0$. According to the last
part of Proposition \ref{Macron}, $(A,B,\CL,0)$ is well-posed.
\end{proof}

\begin{corollary} \label{Merkel}
Let $X$ be a Hilbert space, let $A:D(A)\rarrow X$ be the generator of
an operator semigroup $\tline$ on $X$, let $B\in\Lscr(\cline^m,
X_{-1})$ be an admissible control operator for $\tline$ and let $C\in
\Lscr(X_1,\cline^p)$ be an admissible observation operator for
$\tline$. Assume that $A$ is diagonalizable, meaning that there is a
Riesz basis $(\phi_k)$ in $X$ ($k\in\nline$) consisting of
eigenvectors of $A$, and the corresponding eigenvalues $\mu_k$ satisfy
\vspace{-1mm}
$$ \mu_k \m=\m ia k^2 + \Oscr(1),\ \mbox{ where }\ a>0 \m.$$

Then $(A,B,\CL,0)$ is a compatible system node that is well-posed
and strictly proper.
\end{corollary}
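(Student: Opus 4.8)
The plan is to reduce the statement, by a similarity transformation, to Proposition \ref{Gail}, and then to restore the general eigenvalue asymptotics by a bounded perturbation. First I would diagonalize $A$: since $(\phi_k)$ is a Riesz basis of $X$, there is a bounded and boundedly invertible $Q\in\Lscr(X,l^2)$ with $Q\phi_k=e_k$, where $(e_k)$ is the standard orthonormal basis of $l^2$; then $\tilde A:=QAQ^{-1}$ is the diagonal operator on $l^2$ given by $(\tilde Ax)_k=\mu_k x_k$, it generates the semigroup $(Q\tline_tQ^{-1})$, and $\tilde B:=QB$, $\tilde C:=CQ^{-1}$ are again admissible for this semigroup. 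Since $(\tilde A,\tilde B,\tilde C,0)$ has the same transfer function as $(A,B,C,0)$, and admissibility, well-posedness, regularity and strict properness are all invariant under such a similarity, it suffices to prove the statement for $(\tilde A,\tilde B,\tilde C,0)$; from now on I drop the tildes and assume $X=l^2$ with $A$ diagonal, eigenvalues $\mu_k$.

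Next I would split off a bounded perturbation: writing $\mu_k=iak^2+r_k$, the hypothesis gives $M_0:=\sup_k|r_k|<\infty$, so the diagonal operator $P$ with $(Px)_k=r_kx_k$ lies in $\Lscr(l^2)$ and $A=A_0+P$, where $A_0$ is the operator of Proposition \ref{Gail}. Because $P$ is bounded, $D(A_0)=D(A)$ and the spaces $X_1,X_{-1}$ coincide for $A_0$ and $A$; moreover, by the invariance of admissibility under bounded perturbations (\cite[Theorem 5.4.2 and Corollary 5.5.1]{obs_book}), $B$ and $C$ are admissible also for the semigroup generated by $A_0$. Now Proposition \ref{Gail} applies to $(A_0,B,C,0)$ and shows it is a well-posed and strictly proper compatible system node, with $Z\subset D(\CL)$ (relative to $A_0$) by Proposition \ref{Savta}. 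Then Proposition \ref{Trump}, applied with the bounded perturbation $P$, gives that $(A,B,C,0)=(A_0+P,B,C,0)$ is a well-posed compatible system node.

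The main obstacle is that Proposition \ref{Trump} delivers only well-posedness, not strict properness, so the remaining step is to check directly that the transfer function $\GGG$ of $(A,B,C,0)$ is strictly proper. Writing $\GGG_0$ for the (strictly proper) transfer function of $(A_0,B,C,0)$, the identity \dref{Gad_Hareli} yields $\GGG(s)-\GGG_0(s)=C(sI-A_0)^{-1}P(sI-A)^{-1}B$. Here I would use the standard estimates $\|C(sI-A_0)^{-1}\|=\Oscr((\Re s)^{-1/2})$ (for an admissible observation operator) and $\|(sI-A)^{-1}B\|=\Oscr((\Re s)^{-1/2})$ (for an admissible control operator), both uniformly in $\Im s$ on a right half-plane; these underlie the computation in the proof of Proposition \ref{Gail}. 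Since $\|P\|\leq M_0$, this forces $\|\GGG(s)-\GGG_0(s)\|=\Oscr(1/\Re s)\to0$ uniformly in $\Im s$, and together with the strict properness of $\GGG_0$ it follows that $\GGG$ is strictly proper.

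As an alternative to invoking those decay estimates, I would note that after the reduction everything is diagonal, so one can estimate $\GGG(s)=\sum_k c_kb_k^{*}/(s-\mu_k)$ directly by the method of Proposition \ref{Gail}: for $\Re s>2M_0$ one has $|s-\mu_k|\geq\tfrac12|s-iak^2|$, whence $\|\GGG(s)\|\leq 2\sum_k\|c_k\|\,\|b_k\|/|s-iak^2|$, and the elementary sum estimates already established in the proof of Proposition \ref{Gail} show this is $\Oscr((\Re s)^{-1/2})$, uniformly in $\Im s$, and finite on each right half-plane; this also re-proves regularity with feedthrough $0$ and $Z\subset D(\CL)$ (relative to $A$) directly, via Proposition \ref{Savta}. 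Either way, undoing the similarity of the first step, $(A,B,\CL,0)$ (with $A$, $C$ the original operators) is a well-posed and strictly proper compatible system node, as claimed.
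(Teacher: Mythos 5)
Your proposal is correct and follows essentially the same route as the paper, whose entire proof of this corollary is the one-line remark that it ``follows from Propositions \ref{Trump} and \ref{Gail}'': reduce to the exactly diagonal case by the Riesz-basis similarity, split off the bounded diagonal perturbation $P$ with $(Px)_k=r_kx_k$, apply Proposition \ref{Gail} to $A_0$, and restore $A=A_0+P$ via Proposition \ref{Trump}. Your extra step is a genuine improvement in rigor rather than a different method: you correctly observe that Proposition \ref{Trump} yields only well-posedness and not strict properness, and you close that gap using the identity \dref{Gad_Hareli} together with the $\Oscr((\Re s)^{-1/2})$ decay of $C(sI-A_0)^{-1}$ and $(sI-A)^{-1}B$ (or the direct diagonal estimate $|s-\mu_k|\geq\tfrac12|s-iak^2|$ for $\Re s>2M_0$), which is exactly the argument the paper itself deploys for the analogous claim in the proof of Proposition \ref{keso_van}.
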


Indeed, this follows from Propositions \ref{Trump} and \ref{Gail}.

\section{Properties of the system to be controlled} 

\ \ \ We want to reformulate the equations \dref{Sch} and
\dref{Sch_unp} in the abstract operator theory framework. For this,
first we introduce a semigroup generator on $\hline$, a bounded
perturbation of $A$ from \dref{A-def}: \vspace{-1mm}
\begin{equation} \label{Ah-def}
   A_h f \m=\m Af + h f \FORALL f\in D(A_h) \m=\m D(A).
\end{equation}
We define the operators $B_l,B_r$ as follows: \vspace{-1mm}
$$B_l \m=\m i\delta(\cdot),\qquad B_r \m=\m -i\delta(\cdot-1).$$
Here $\delta$ is the Dirac mass. We denote the adjoints of $A$,
$B_l$ and $B_r$ by $A^*$, $B_l^*$ and $B_r^*$, respectively, and it
is easy to check that \vspace{-2mm}
$$ \begin{array}{l}\disp A^*f \m=\m if'',\quad D(A^*) \m=\m \{f\in
   H^2(0,1)\ |\ f'(0)=iqf(0),\ f'(1)=0\},\crr B_l^*f \m=\m -if(0),\
   \ B_r^*f \m=\m if(1) \FORALL f\in D(A^*). \end{array}$$
The operators $B_l$ and $B_r$ are the control operators that
correspond to the inputs $d_2$ and $u$ in the boundary control
systems \dref{Sch} as well as \dref{Sch_unp}. This can be checked
using \cite[Remark 10.1.6]{obs_book}.

Define $C_m\in\Lscr(H^1(0,1),\cline)$ by $C_m f=f(1)$. Then
\dref{Sch} can be rewritten in the abstract form \vspace{-1mm}
\begin{equation} \label{Sch-ab}
   \left\{ \begin{array}{l} \dot{z}(\cdot,t) \m=\m A_hz(\cdot,t) +
   g(\cdot)d_1(t) + B_l d_2(t) + B_r u(t) \m,\\ \m\ \ y(t) \m=\m C_e
   [z(\cdot,t)] \m,\qquad y_m(t) \m=\m C_m[z(\cdot,t)] \m,
   \end{array} \right.
\end{equation}
which corresponds to the compatible system node$(A_h,[g(\cdot)\
B_l\ B_r],\sbm{C_e\\ C_m},0)$ on $(\cline^3,\hline,\cline^2)$. It is
easy to check that for this system node, the space $Z$ from
\dref{yellow_vests} is given by \vspace{-2mm}
\begin{equation} \label{Samsung_to_Hanna}
  Z \m=\m H^2(0,1) \m.
\end{equation}
The equivalence between \dref{Sch} and \dref{Sch-ab} means that they
have the same classical solutions, and this equivalence can be
checked using the techniques in \cite[Sect.~10.1]{obs_book}.

Similarly, the system \dref{Sch_unp} can be rewritten in the
abstract form \vspace{-1mm}
\begin{equation} \label{Theresa_May}
   \left\{ \begin{array}{l} \dot{z}(\cdot,t) \m=\m A z(\cdot,t)
   + B_l d_2(t) + B_r u(t) \m,\\ \m\ \ y(t) \m=\m C_e[z
   (\cdot,t)] \m,\qquad y_m(t) \m=\m C_m [z(\cdot,t)] \m,
   \end{array} \right.
\end{equation}
which corresponds to the compatible system node $(A,[B_l\ B_r],
\sbm{C_e\\ C_m},0)$ on $(\cline^2,\hline,\cline^2)$. For this system
node, the space $Z$ is again given by \dref{Samsung_to_Hanna}.

\begin{lemma} \label{lem-A}
Let $A$ be defined by {\rm\dref{A-def}}. Then $A^{-1}$ exists and it
is compact. Hence, $\sigma(A)$, the spectrum of $A$, consists of
isolated eigenvalues of finite algebraic multiplicity. All eigenvalues
of $A$ are located in a vertical strip, they have positive real parts
and there exists a sequence of eigenfunctions of $A$, which forms a
Riesz basis for $\hline$. Therefore, $A$ generates an operator group
$\tline$ on $\hline$.

The observation operator $C_m$ is admissible for the group $\tline$.
\end{lemma}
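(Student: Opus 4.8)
The plan is to establish the assertions in the order listed.

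First I would invert $A$ directly: solving $-if''=y$ and integrating twice, the boundary conditions $f'(1)=0$ and $f'(0)=-iqf(0)$ force $f'(0)=-i\int_0^1 y$ and $f(0)=\frac1q\int_0^1 y$ (this is where $q>0$ enters), and the resulting $f$ lies in $D(A)$ and depends boundedly on $y\in\hline$. Thus $A$ is boundedly invertible, and since $A^{-1}$ maps $\hline$ into $H^2(0,1)$, composing with the compact embedding $H^2(0,1)\hookrightarrow\hline$ shows $A^{-1}$ is compact (equivalently, it has a continuous integral kernel). The standard theory of closed operators with compact resolvent then gives that $\sigma(A)$ consists of isolated eigenvalues of finite algebraic multiplicity with no finite accumulation point.

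For $f\in D(A)$, integration by parts together with both boundary conditions yields the identity $\langle Af,f\rangle = q|f(0)|^2 + i\|f'\|^2$. Applied to an eigenfunction $\phi$, $A\phi=\mu\phi$, this gives $\Re\mu = q|\phi(0)|^2/\|\phi\|^2 \ge 0$ and $\Im\mu = \|\phi'\|^2/\|\phi\|^2 > 0$; moreover $\Re\mu=0$ forces $\phi(0)=0$, hence $\phi'(0)=-iq\phi(0)=0$, hence $\phi\equiv0$ by uniqueness for the ODE $\phi''=i\mu\phi$ --- so every eigenvalue has strictly positive real part. Writing $\phi(x)=a\cos(\nu x)+b\sin(\nu x)$ with $\nu^2=-i\mu$, the boundary conditions turn into $b=-iqa/\nu$ and the characteristic equation $\nu\sin\nu+iq\cos\nu=0$, i.e. $\tan\nu=-iq/\nu$. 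For large $k$ this has exactly one root $\nu_k = k\pi - \frac{iq}{k\pi} + \Oscr(k^{-3})$, so $\mu_k = i\nu_k^2 = i\pi^2 k^2 + 2q + \Oscr(k^{-2})$, while only one further eigenvalue remains. In particular all eigenvalues lie in a vertical strip $\{0<\Re s\le M\}$, and $A$ meets the hypothesis $\mu_k=i\pi^2k^2+\Oscr(1)$ of Corollary~\ref{Merkel} with $a=\pi^2$.

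Next I would prove the Riesz basis property from these asymptotics. Normalizing, $\hat\phi_k=\phi_k/\|\phi_k\|$ satisfies $\|\hat\phi_k - \sqrt2\cos(k\pi\,\cdot)\| = \Oscr(k^{-1})$, so $\{\hat\phi_k\}$ is quadratically close to the orthonormal basis $\{1\}\cup\{\sqrt2\cos(k\pi\,\cdot)\}_{k\ge1}$ of $\hline$ (the one exceptional eigenfunction pairing with the constant function). Granting completeness of the root functions of $A$ in $\hline$ --- which one may take either from the classical Birkhoff (strong regularity) theory for second order operators with separated boundary conditions, or from a Keldysh-type argument exploiting that $(sI-A)^{-1}$ is a finite rank perturbation of the self-adjoint Neumann resolvent --- Bari's theorem upgrades quadratic closeness to a Riesz basis of eigenfunctions. \textbf{This is the step I expect to be the main obstacle:} the asymptotic analysis is routine, but carefully justifying the completeness of the root subspaces (and the exact index matching used in Bari's theorem) takes work. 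Once $A$ is Riesz-spectral with eigenvalues in a vertical strip, both $A$ and $-A$ generate $C_0$-semigroups (in each case the real parts of the relevant eigenvalues are bounded above), so $A$ generates a group $\tline$.

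Finally, for the admissibility of $C_m$, expand $z=\sum_k c_k\hat\phi_k$ with $\{c_k\}\in\ell^2$, $\|\{c_k\}\|_{\ell^2}\le C\|z\|$; then $C_m\tline_t z = \sum_k c_k\,\hat\phi_k(1)\,e^{\mu_k t}$. From the explicit form of $\phi_k$ and the characteristic equation one checks $\sup_k|\hat\phi_k(1)|<\infty$, so $a_k:=c_k\hat\phi_k(1)$ is again $\ell^2$ with $\|\{a_k\}\|_{\ell^2}\le C\|z\|$, and it remains to show $\int_0^\tau\big|\sum_k a_k e^{\mu_k t}\big|^2\dd t \le m_\tau\sum_k|a_k|^2$, i.e. that $\{e^{\mu_k t}\}$ is a Bessel sequence in $L^2[0,\tau]$. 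Here the Schr\"odinger-type spacing is decisive: $\Im\mu_k = \pi^2 k^2 + \Oscr(1)$ has gaps tending to infinity, so the $\mu_k$ are uniformly separated; writing $e^{\mu_k t} = e^{2qt}e^{i(\Im\mu_k)t} + r_k(t)$ one has $\sum_k\|r_k\|_{L^2[0,\tau]}^2 < \infty$ because $\Re\mu_k - 2q = \Oscr(k^{-2})$, and since $\{e^{i(\Im\mu_k)t}\}$ is Bessel in $L^2[0,\tau]$ by an Ingham-type inequality for lacunary frequencies, multiplying by the fixed bounded factor $e^{2qt}$ and adding a square-summable perturbation keeps it Bessel. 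This yields the admissibility of $C_m$. (Alternatively, one may invoke the Carleson-measure criterion for admissible observation operators after translating $A$ so that its spectrum lies in the open left half-plane, bounded away from the imaginary axis.)
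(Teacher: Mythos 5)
Your proposal is correct and follows essentially the same route as the paper: an explicit formula for $A^{-1}$ plus a compact Sobolev embedding, the energy identity $\Re\langle Af,f\rangle=q|f(0)|^2$ to place the eigenvalues in the open right half-plane, the asymptotics $\mu_k=2q+i\pi^2k^2+\Oscr(k^{-2})$, $\phi_k\approx\cos(k\pi\,\cdot)$, Bari's theorem for the Riesz basis, and a Carleson-measure/Ingham-type argument for the admissibility of $C_m$. The completeness and index-matching issue you flag as the main obstacle is exactly what the versions of Bari's theorem cited in the paper (Guo; Xu--Weiss) are designed to absorb: they apply to eigenvectors of an operator with compact resolvent that are quadratically close to a Riesz basis and allow for a finite discrepancy of low-index terms, so no separate completeness argument is needed.
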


\begin{proof}
A straightforward computation shows that $A$ has a bounded inverse on
$\hline$ and \vspace{-3mm}
$$ \m\ \ \ (A^{-1}\phi)(x) \m=\m \frac{-(i+qx)\int_0^1 \phi(y)\dd y}
   {iq} - i\int_0^x (x-y)\phi(y)\dd y.$$
Since the embedding of $H^1(0,1)$ into $L^2[0,1]$ is compact, it
follows that $A^{-1}$ is compact. This implies that $\sigma(A)$
consists of isolated eigenvalues of finite algebraic multiplicity. It
is easy to verify that \m $\Re\langle Af,f\rangle=q |f(0)|^2\geq 0$,
which implies that all the eigenvalues of $A$ have non-negative real
parts. Next, we show that there is no eigenvalue on the imaginary
axis. Otherwise, suppose that $Af=i\beta f$ with $\beta\in\rline$ has
a nonzero solution, i.e., \vspace{-1mm}
\begin{equation} \label{f-equ-eig}
   f''(x) \m=\m -\beta f(x),\ \ f'(0) \m=\m -iqf(0),\ \ f'(1) \m=\m 0.
\end{equation}
Multiplying the first equation of \dref{f-equ-eig} with
$\overline{f(x)}$ (the conjugate of $f(x)$) and integrating over
$[0,1]$, it follows from the boundary condition that \vspace{-2mm}
$$iq|f(0)|^2-\int_0^1|f'(x)|^2\dd x=-\beta\int_0^1|f(x)|^2\dd x,$$
which, jointly with $q>0$ and taking imaginary part, gives $f(0)=0$.
By the second equation of \dref{f-equ-eig}, $f'(0)=0$. Thus,
\dref{f-equ-eig} has only the zero solution, a contradiction.
Therefore, all the eigenvalues of $A$ have positive real parts.

Now we consider the eigenvalue problem $Af=\mu f$ and let
$\mu=-i\l^2$, that is \vspace{-2mm}
$$ \phi''(x) \m=\m \l^2\phi(x),\ \ \phi'(0) \m=\m -iq\phi(0),\ \
   \phi'(1) \m=\m 0,$$
to yield \vspace{-4mm}
\begin{equation} \label{phi-eigf}
   \phi(x) \m=\m \frac{\l-iq}{\l+iq}e^{\lambda x}+e^{-\l x},
\end{equation}
where $\l\in\cline$ satisfies \vspace{-1mm}
\begin{equation} \label{eig-lam1}
   e^{2\l} \m=\m \frac{\l+iq}{\l-iq} \m=\m 1+\frac{2iq}{\l-iq}
   \m=\m 1+\frac{2iq}{\l}+\Oscr(|\l|^{-2})\mbox{   as  }|\l|
   \to\infty.
\end{equation}
Thus, we have \vspace{-2mm}
$$\l_n \m=\m n\pi i+\Oscr(n^{-1}), \qquad n\in\nline.$$
Substituting this into \dref{eig-lam1}, we get that for this
specific case, $\Oscr(n^{-1})=q/(n\pi)+\Oscr(n^{-2})$, hence
\vspace{-2mm}
$$ \l_n \m=\m n\pi i+\frac{q}{n\pi}+\mathcal{O}(n^{-2}),
   \qquad n\in\nline \m.$$
It follows from here and \dref{phi-eigf} that the asymptotic
expressions for eigenpairs of $A$ are \vspace{-2mm}
\begin{equation} \label{Haley}
   \left\{\begin{array}{l} \mu_n=2q+i(n\pi)^2+\Oscr(n^{-2}),
   \\ \phi_n(x) \m=\m \cos(n\pi x)+\Oscr(n^{-1}),
   \end{array}\right.
\end{equation}
which implies that all the eigenvalues $\mu_n$ of $A$ are located in a
vertical strip and the corresponding eigenvectors $\phi_n$ are
quadratically close to an orthonormal basis. By a theorem known as
``Bari's theorem'', see \cite[Theorem 6.3]{Guo_2001} or \cite[Theorem
2.4]{XuWe_2011}, $\{\phi_n\}$ forms a Riesz basis for $\hline$. This
shows that the spectrum-determined growth condition holds for
$A$. Thus, $A$ generates an operator semigroup $\tline$ and
$\|\tline_t\|\leq Le^{\o t}$ with $\o=\sup\{\Re\l\m| \ \l\in
\sigma(A)\}$ for some $L\geq 1$. Similarly, we can show that $-A$
generates an operator semigroup. By \cite[Proposition 2.7.8]
{obs_book}, $\tline$ can be extended to a group.

We show that $C_m$ is admissible for the group $\tline$. Denote
$c_n=C_m\phi_n$, $n\in\nline$, then according to \dref{Haley} we have
$c_n=\cos(n\pi)+\mathcal{O}(n^{-1})$. The eigenvalues $\mu_n$ are in
a vertical strip and for large enough $n$, the distance between their
imaginary parts is bounded from below by a positive number. Thus, the
admissibility of $C_m$ follows from the simple version of the Carleson
measure criterion applicable for diagonal operator groups, see
\cite[Proposition 5.3.5]{obs_book}.
\end{proof}

\begin{remark} {\rm
By Lemma \ref{lem-A}, all the eigenvalues of $A$ have positive real
parts. So, if the values $h(x)$ in \dref{Sch} are non-negative or if
its sup norm is sufficiently small, then also the eigenvalues of $A_h$
have positive real parts. This is why we call the system \dref{Sch}
anti-stable. This situation is different from the unstable case in
\cite{Deu2015auto}, where there are at most finitely many unstable
eigenvalues for the system to be controlled. The system \dref{Sch} is
different also from the one in \cite{Liu_2018}, where the
Schr\"{o}dinger equation is essentially exponentially stable when
the disturbance vanishes.}
\end{remark}

\begin{proposition} \label{Brighitte}
The operators $B_l,B_r$ are admissible control operators for
$\tline$. Therefore, for any initial state $z(0)=z_0\in\hline$ and
any $d_2,u\in L^2_{loc}[0,\infty)$, the first equation in {\rm
\dref{Theresa_May}} admits a unique solution in $\hline_{-1}$ (in
the sense of {\rm\cite[{\it Definition} 4.1.1]{obs_book}}) and $z\in
C([0,\infty);\hline)$.

Moreover, if $d_2,u\in H^1_{loc}(0,\infty)$ are such that $Az_0+B_l
d_2(0)+B_r u(0)\in\hline$, then the solution $z$ satisfies
\vspace{-2mm}
\begin{equation} \label{Enrique}
  z \m\in\m C([0,\infty);Z) \cap C^1([0,\infty);\hline) \m.
\end{equation}
In this case, the functions $y$ and $y_m$ can be defined by the
second equation in {\rm\dref{Theresa_May}} and $(z,\sbm{d_2\\ u},
\sbm{y\\ y_m})$ is a classical solution of {\rm\dref{Theresa_May}}
and also of {\rm\dref{Sch_unp}}.
\end{proposition}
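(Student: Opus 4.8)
\emph{Strategy.} I would prove the four assertions in turn, the admissibility statement being the one requiring real work. To show that $B_l$ and $B_r$ are admissible control operators for $\tline$, the plan is to pass to adjoints: $B_l$ (resp.\ $B_r$) is admissible for $\tline$ if and only if $B_l^*f=-if(0)$ (resp.\ $B_r^*f=if(1)$) is an admissible observation operator for the adjoint group, which is generated by $A^*$. Since $A^*f=if''$ with $D(A^*)=\{f\in H^2(0,1)\mid f'(0)=iqf(0),\ f'(1)=0\}$ has exactly the same structure as $A$ — indeed $\sigma(A^*)=\overline{\sigma(A)}$ and the eigenfunctions of $A^*$ are the conjugates of those of $A$ — I would rerun the computation from the proof of Lemma \ref{lem-A}: the eigenvalues of $A^*$ lie in a vertical strip, for large $n$ the gaps between their imaginary parts are bounded below, the eigenfunctions form a Riesz basis and are quadratically close to $\{\cos(n\pi x)\}$, so the sequences of their values at $0$ and at $1$ are bounded. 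Then the simple Carleson measure criterion for diagonal groups (\cite[Proposition 5.3.5]{obs_book}), applied exactly as at the end of the proof of Lemma \ref{lem-A}, yields the admissibility of $B_l^*$ and $B_r^*$ for the adjoint group, hence of $B_l$, $B_r$, and therefore of $[B_l\ B_r]\in\Lscr(\cline^2,\hline_{-1})$, for $\tline$.

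The second assertion is then immediate from the general facts recalled in Section 2 (the paragraph following the definition of admissible control operator), applied with $B=[B_l\ B_r]$ and the input $\sbm{d_2\\ u}\in L^2_{loc}$: the equation has a unique solution in $\hline_{-1}$ with $z(0)=z_0$ and $z\in C([0,\infty);\hline)$.

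For the regularity statement \dref{Enrique}, the plan is a bootstrap in time. Assuming $d_2,u\in H^1_{loc}(0,\infty)$ and $\zeta_0:=Az_0+B_l d_2(0)+B_r u(0)\in\hline$, let $v$ be the unique $\hline$-continuous solution in $\hline_{-1}$ of $\dot v=Av+B_l\dot d_2+B_r\dot u$ with $v(0)=\zeta_0$, which exists by the second assertion since $\dot d_2,\dot u\in L^2_{loc}$. Setting $\tilde z(t)=z_0+\int_0^t v(s)\dd s$ and using that $A$ (extended to a bounded operator $\hline\rarrow\hline_{-1}$) commutes with integration, one checks in $\hline_{-1}$ that $A\tilde z(t)=v(t)-B_l d_2(t)-B_r u(t)$, i.e.\ $\dot{\tilde z}=A\tilde z+B_l d_2+B_r u$ with $\tilde z(0)=z_0$; uniqueness then gives $\tilde z=z$, so $z\in C^1([0,\infty);\hline)$ with $\dot z=v$. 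Consequently $Az(t)+B_l d_2(t)+B_r u(t)=\dot z(t)\in\hline$, so $\sbm{z(t)\\ \sbm{d_2(t)\\ u(t)}}\in D(S)$ and hence $z(t)\in Z=H^2(0,1)$ by \dref{Samsung_to_Hanna}. For continuity in $Z$ I would fix $\beta\in\rho(A)$ and write $z(t)=(\beta I-A)^{-1}(\beta z(t)-\dot z(t))+(\beta I-A)^{-1}(B_l d_2(t)+B_r u(t))$, the first summand being continuous into $D(A)$ with its graph norm and the coefficient $\sbm{d_2(t)\\ u(t)}$ of the second continuous into $\cline^2$, whence $z\in C([0,\infty);Z)$ by the definition of the $Z$-norm.

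Finally, for the classical-solution claim: since $z(t)\in Z$ for all $t$, one may define $y=C_e[z(\cdot,\cdot)]$ and $y_m=C_m[z(\cdot,\cdot)]$; because $C_e\in\Lscr(Z,\cline)$ and $C_m\in\Lscr(H^1(0,1),\cline)$, together with $z\in C([0,\infty);Z)$ and the embedding $H^1_{loc}\hookrightarrow C$, all four conditions of Definition \ref{ClassSolDef} hold for the node $(A,[B_l\ B_r],\sbm{C_e\\ C_m},0)$, so $(z,\sbm{d_2\\ u},\sbm{y\\ y_m})$ is a classical solution of \dref{Theresa_May}; by the equivalence of \dref{Theresa_May} and \dref{Sch_unp} (checked via the techniques of \cite[Sect.~10.1]{obs_book}) the same triple is a classical solution of \dref{Sch_unp}. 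I expect the main obstacle to be the first step — pinning down the spectral picture of $A^*$ precisely enough to reuse the Carleson argument of Lemma \ref{lem-A} — with a secondary technical point in the third step, namely justifying the integral identity in $\hline_{-1}$ that powers the time bootstrap.
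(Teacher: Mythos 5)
Your proposal is correct, but the central step -- admissibility of $B_l$ and $B_r$ -- is handled by a genuinely different argument than the paper's. Both you and the paper pass to the adjoint via \cite[Theorem 4.4.3]{obs_book}, but from there the paper does \emph{not} redo the spectral analysis: it verifies that $B_l^*{A^*}^{-1}$ is bounded by an explicit formula and then proves the observation estimate for the adjoint system by an energy identity, computing $\dot E(t)=q\m|z(0,t)|^2$ along solutions of the adjoint equation and integrating, so that $\int_0^T|\eta(t)|^2\dd t=\frac1q[E(T)-E(0)]\leq\frac1q[1+Le^{\o T}]E(0)$. This multiplier argument exploits the collocation of $B_l^*$ with the boundary term $q$ and is very short for $B_l$ (though the paper's claim that the proof for $B_r$ is ``similar'' glosses over the fact that the same identity does not directly control the trace at $x=1$). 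Your route -- eigenpairs of $A^*$ as conjugates of those of $A$, eigenvalues $\overline{\mu_n}=2q-i(n\pi)^2+\Oscr(n^{-2})$ in a vertical strip with separated imaginary parts, eigenfunctions quadratically close to $\{\cos(n\pi x)\}$ so that $|\overline{\phi_n}(0)|$ and $|\overline{\phi_n}(1)|$ are bounded, then the diagonal Carleson criterion \cite[Proposition 5.3.5]{obs_book} -- is somewhat longer for $B_l$ but treats $B_l$ and $B_r$ uniformly, which is arguably an advantage; it is essentially the same machinery the paper itself uses for $C_m$ in Lemma \ref{lem-A}, so nothing new needs to be established. For the remaining assertions the paper simply cites \cite[Propositions 4.2.5 and 4.2.10]{obs_book}, whereas you reprove the $H^1_{loc}$ regularity statement by the time-differentiation bootstrap ($v=\dot z$ solves the same class of equation with inputs $\dot d_2,\dot u$ and initial state $Az_0+B_ld_2(0)+B_ru(0)\in\hline$); your computation of $A\tilde z(t)$ in $\hline_{-1}$ is correct and is in fact the standard proof of the cited proposition, so this is a self-contained but equivalent treatment.
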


Recall that $Z$ appearing above is given by \dref{Samsung_to_Hanna}.
We remark that the condition $A_h z_0+B_l d_2(0)+B_r u(0)\in\hline$
appearing above is equivalent to \vspace{-1mm}
$$ z_0 \m\in\m H^2(0,1) \m,\qquad \frac{\dd}{\dd x} z_0(0) \m=\m -iq
   z(0) + d_2(0) \m,\qquad \frac{\dd}{\dd x} z_0(1) \m=\m u(0) \m.$$
This can be verified using the techniques of boundary control systems
in \cite[Sect.~10.1]{obs_book}.

\begin{proof} We prove the admissibility of $B_l$ for $\tline$. For
this, recall from \cite[Theorem 4.4.3]{obs_book} that it suffices to
show that $B^*_l$ an admissible observation operator for the adjoint
semigroup $\tline^*$. This is equivalent to showing that (i) $B_l^*
{A^*}^{-1}$ is a bounded operator on $\hline$ and (ii) for each $T>0$
there exists $M_T >0$ such that for every initial state, the output
signal $\eta$ of the system (defined for $t\geq 0$) \vspace{-2mm}
\begin{equation} \label{z-equ}
   \left\{ \begin{array}{l}\disp z_t(x,t) \m=\m iz_{xx}(x,t),\ \ \
   x\in(0,1),\\ z_x(0,t) \m=\m iqz(0,t),\ \ \ z_x(1,t) \m=\m 0,\\
   \eta(t) \m=\m z(0,t),\end{array}\right.
\end{equation}
satisfies
$$ \int_0^T |\eta(t)|^2\dd t \m\leq\m M_T E(0),\quad \mbox{where}
   \qquad E(t) \m=\m \half \|z(\cdot,t)\|_\hline^2 \m.$$
A simple computation shows that $A^*$ has bounded inverse
on $\hline$ and \vspace{-2mm}
$$ \left\{\begin{array}{l}\disp {A^*}^{-1}\phi \m=\m \frac{(-i+qx)
   \int_0^1 \phi(y)\dd y}{-iq}-i\int_0^x(x-y)\phi(y)\dd y,\\ \disp
   B_l^*{A^*}^{-1}\phi \m=\m -\frac{i}{q}\int_0^1\phi(y)\dd y.
   \end{array}\right.$$
Hence $B_l^*{A^*}^{-1}$ is bounded on $\hline$. We differentiate $E$
with respect to $t$ along the solution of \dref{z-equ} to obtain \m
$\dot{E}(t)=q|\eta(t)|^2$, which, together with Lemma \ref{lem-A},
gives \vspace{-2mm}
$$ \int_0^T |\eta(t)|^2\dd t \m=\m \frac{1}{q}[E(t)-E(0)] \m\leq\m
   \frac{1}{q}[1+Le^{\o T}]E(0),$$
where $\o,L$ are as in the proof of Lemma \ref{lem-A}. Thus, $B_l$ is
admissible.

The proof of the fact that $B_r$ is an admissible control operator for
$\tline$ is similar. The statement about unique and continuous
solutions of \dref{Theresa_May} follows from \cite[Proposition
4.2.5]{obs_book}. Finally, the statement for $d_2,u\in H^1_{loc}(0,
\infty)$ follows from \cite[Proposition 4.2.10]{obs_book}.
\end{proof}

There is a similar statement for the original system \dref{Sch},
formulated abstractly in \dref{Sch-ab}:

\begin{corollary} \label{ProAB}
The operator $A_h$ from {\rm\dref{Ah-def}} generates an operator group
$(e^{A_ht})_{t\in\rline}$ on $\hline$ and $B_l,B_r$ are admissible
control operators for this operator group. Therefore, for any initial
state $z(0)=z_0\in\hline$ and any $d_1,d_2,u\in L^2_{loc}[0,\infty)$,
the first equation in {\rm\dref{Sch-ab}} admits a unique solution in
$\hline_{-1}$ (in the sense of {\rm\cite[{\it Definition} 4.1.1]
{obs_book}}) and $z\in C([0,\infty);\hline)$.

Moreover, if $d_1,d_2,u\in H^1_{loc}(0,\infty)$ are such that $A_h
z_0+B_l d_2(0)+B_r u(0)\in\hline$, then the solution $z$ satisfies
{\rm\dref{Enrique}}. In this case, the functions $y$ and $y_m$ can be
defined by the second equation in {\rm\dref{Sch-ab}} and
$(z,\sbm{d_1\\ d_2\\ u}, \sbm{y\\ y_m})$ is a classical solution of
{\rm\dref{Sch-ab}}, and also of {\rm\dref{Sch}}.
\end{corollary}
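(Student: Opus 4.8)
The plan is to derive Corollary \ref{ProAB} from Proposition \ref{Brighitte} by exploiting the fact that $A_h=A+h$ is a bounded perturbation of $A$, so that the two generators share the same admissible control operators and the same spaces $\hline_1,\hline_{-1},Z$. Concretely, the multiplication operator $P\colon f\mapsto hf$ belongs to $\Lscr(\hline)$ because $h\in C[0,1]$, and $A_h=A+P$. First I would invoke Lemma \ref{lem-A}, which shows that $A$ generates an operator group on $\hline$; then a standard bounded-perturbation argument (the generator $A+P$ of a group again generates a group, since $-(A+P)=(-A)+(-P)$ is also such a perturbation, and $-A$ generates a group by Lemma \ref{lem-A}) shows that $A_h$ generates the group $(e^{A_ht})_{t\in\rline}$.

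Next I would transfer admissibility of $B_l$ and $B_r$ from $\tline$ (the group generated by $A$) to the group generated by $A_h$. This is exactly the content of \cite[Theorem 5.4.2 and Corollary 5.5.1]{obs_book}, which were already used in the proof of Proposition \ref{Trump}: bounded perturbation of the generator preserves admissibility of control operators (and of observation operators), and leaves $\hline_1$ and $\hline_{-1}$ unchanged. Since Proposition \ref{Brighitte} gives that $B_l,B_r$ are admissible for $\tline$, they are therefore admissible for $(e^{A_ht})$. It then follows from \cite[Proposition 4.2.5]{obs_book} that, for any $z_0\in\hline$ and any $d_1,d_2,u\in L^2_{loc}[0,\infty)$ (note $g(\cdot)d_1(\cdot)$ is an $\hline$-valued locally $L^2$ forcing term since $g\in C[0,1]$, and a bounded-operator input channel is trivially admissible), the first equation of \dref{Sch-ab} has a unique $\hline_{-1}$-solution with $z\in C([0,\infty);\hline)$.

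For the regularity statement, I would note that the space $Z$ for the system node $(A_h,[g(\cdot)\ B_l\ B_r],\sbm{C_e\\ C_m},0)$ is $H^2(0,1)$, just as recorded after \dref{Sch-ab} in \dref{Samsung_to_Hanna}, because $Z$ depends only on $A$ (equivalently $A_h$, which has the same domain and the same $\hline_{-1}$) and on the range of the control operator, and the bounded channel $g(\cdot)$ contributes nothing new to $Z$. If $d_1,d_2,u\in H^1_{loc}(0,\infty)$ and $A_hz_0+B_ld_2(0)+B_ru(0)\in\hline$, then \cite[Proposition 4.2.10]{obs_book} (applied to the inhomogeneous equation with the extra bounded term $g(\cdot)d_1(t)$, which is $H^1_{loc}$ in $t$) yields $z\in C([0,\infty);Z)\cap C^1([0,\infty);\hline)$, i.e. \dref{Enrique}. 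Since $C_e$ is bounded on $H^2(0,1)$ and $C_m\in\Lscr(H^1(0,1),\cline)$, the outputs $y(t)=C_e[z(\cdot,t)]$ and $y_m(t)=C_m[z(\cdot,t)]$ are then well defined and continuous, so $(z,\sbm{d_1\\ d_2\\ u},\sbm{y\\ y_m})$ is a classical solution of \dref{Sch-ab}; the equivalence of \dref{Sch-ab} with \dref{Sch} (same classical solutions, via \cite[Sect.~10.1]{obs_book}) finishes the proof.

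I do not expect any genuine obstacle here: the whole corollary is a routine ``bounded-perturbation plus extra bounded input channel'' modification of Proposition \ref{Brighitte}. The only point requiring a word of care is the verification that the condition $A_hz_0+B_ld_2(0)+B_ru(0)\in\hline$ is the right hypothesis (rather than $Az_0+\cdots\in\hline$); but since $h z_0\in\hline$ whenever $z_0\in\hline$, the two conditions $A z_0+B_ld_2(0)+B_ru(0)\in\hline$ and $A_h z_0+B_ld_2(0)+B_ru(0)\in\hline$ are equivalent, and both are equivalent to the pointwise conditions $z_0\in H^2(0,1)$, $z_0'(0)=-iqz_0(0)+d_2(0)$, $z_0'(1)=u(0)$, exactly as remarked after the statement of Proposition \ref{Brighitte}.
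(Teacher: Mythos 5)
Your proposal is correct and follows essentially the same route as the paper: the paper's proof also deduces the group property of $A_h$ from Lemma \ref{lem-A} plus bounded perturbation, transfers admissibility of $B_l,B_r$ via \cite[Corollary 5.5.1]{obs_book}, and then finishes exactly as in Proposition \ref{Brighitte} (i.e., via \cite[Propositions 4.2.5 and 4.2.10]{obs_book}). Your extra remark that the conditions $A z_0+B_l d_2(0)+B_r u(0)\in\hline$ and $A_h z_0+B_l d_2(0)+B_r u(0)\in\hline$ coincide is a correct and welcome clarification, but does not change the argument.
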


\begin{proof} By Lemma \ref{lem-A} and the boundedness of $h$, it is
clear that $A_h$ generates a strongly continuous operator group on
$\hline$ (this follows, for instance, by applying \cite[Theorem
2.11.2]{obs_book} to $A_h$ and also to $-A_h$). Since $A_h$ is a
bounded perturbation of $A$, according to \cite[Corollary 5.5.1]
{obs_book}, $B_l$ and $B_r$ are admissible control operators also 
for $(e^{A_h t})_{t\geq 0}$. The end of the proof is now the same as
for Proposition \ref{Brighitte}.
\end{proof}

\begin{proposition} \label{keso_van}
The compatible system node $(A_h,[g(\cdot)\ B_l\ B_r],\sbm{C_e\\
C_m}\nm,0)$ (which corresponds to the equations {\rm\dref{Sch-ab}})
is well-posed. Similarly, the compatible system node $(A,[B_l\ B_r],
\sbm{C_e\\ C_m}\nm,0)$ (which corresponds to the equations
{\rm\dref{Theresa_May}}) is well-posed. If we replace $C_e$ with
$C_{e\L}$ (defined as in {\rm\dref{MillionDollars}}), then both of
these system nodes become strictly proper (hence, all these systems
are regular).
\end{proposition}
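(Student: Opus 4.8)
The plan is to deduce all four assertions from Corollary~\ref{Merkel}. First I would verify that the operator $A$ of \dref{A-def} meets the hypotheses of that corollary. By Lemma~\ref{lem-A}, $A$ has a Riesz basis of eigenvectors $(\phi_n)_{n\in\nline}$ in $\hline$, and by \dref{Haley} its eigenvalues satisfy $\mu_n = 2q + i(n\pi)^2 + \Oscr(n^{-2}) = i\pi^2 n^2 + \Oscr(1)$, so the corollary applies with $a = \pi^2 > 0$. Next I would record the needed admissibility facts: $B_l$ and $B_r$ are admissible control operators for $\tline$ by Proposition~\ref{Brighitte}, and $g(\cdot)$, being the bounded operator $v\mapsto vg$ from $\cline$ into $\hline$ (recall $g\in C[0,1]$), is trivially admissible; hence $[g(\cdot)\ B_l\ B_r]$ and $[B_l\ B_r]$ are admissible, since a control operator is admissible if and only if each of its columns is. Likewise $C_e$ restricted to $D(A)$ is admissible for $\tline$ by the standing assumption of the paper (note $\|C_e f\|\le c\|f\|_{H^2}\le c'\|f\|_{D(A)}$ on $D(A)$), and $C_m$ is admissible by Lemma~\ref{lem-A}; hence $\sbm{C_e\\ C_m}$ restricted to $D(A)$ is an admissible observation operator for $\tline$.

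With these ingredients, Corollary~\ref{Merkel} (applied with the above $A$, with $B=[g(\cdot)\ B_l\ B_r]$ or $B=[B_l\ B_r]$, and with $C=\sbm{C_e\\ C_m}$ restricted to $D(A)$) yields directly that the compatible system node $(A,B,\sbm{C_{e\L}\\ C_{m\L}},0)$ is well-posed and strictly proper. To match the statement of the proposition I must then replace $C_{m\L}$ by the given $C_m$ and track what happens when $C_e$ is kept instead of $C_{e\L}$. Because $\Lambda$-extensions agree with the original operator on $D(A)$, and because for any $v$ one has, by the resolvent identity, $(sI-A)^{-1}Bv = (\beta I-A)^{-1}Bv + w_s$ with $w_s\in D(A)$, the transfer function built using the given $C_e$ (acting on $Z = H^2(0,1)$ as in \dref{Sch-ab}) differs from the one built using $C_{e\L}$ only by a constant matrix, namely $C_e$ applied to $(\beta I-A)^{-1}B$ minus $C_{e\L}$ applied to the same vector; and similarly for $C_m$ versus $C_{m\L}$. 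A short explicit computation of the resolvent states $(sI-A)^{-1}B_lv$ and $(sI-A)^{-1}B_rv$ — which are elementary combinations of $e^{\pm\sqrt{is}\,x}$ — shows that their values at $x=1$ tend to $0$ as $\Re s\to\infty$ (indeed like $|s|^{-1/2}$), i.e. the measurement $y_m=z(1,t)$ has zero feedthrough; hence that constant vanishes in the $C_m$-rows and $C_m = C_{m\L}$ on $Z$. Consequently $(A,B,\sbm{C_{e\L}\\ C_m},0)$ is exactly the strictly proper node produced by Corollary~\ref{Merkel}, while $(A,B,\sbm{C_e\\ C_m},0)$ has a transfer function equal to that strictly proper one plus a constant matrix, hence bounded on a right half-plane; by Proposition~\ref{Macron} this node is well-posed (and, having a limit at $+\infty$ along the real axis, regular).

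It remains to pass from $A$ to $A_h = A + M_h$, where $M_h\in\Lscr(\hline)$ is multiplication by $h\in C[0,1]$. Well-posedness of the $A_h$-versions of both system nodes then follows at once from Proposition~\ref{Trump}. For the strict properness of the $A_h$-versions with $C_e$ replaced by $C_{e\L}$, I would use the perturbation identity \dref{Gad_Hareli} with $P=M_h$: it gives $\GGG_{A_h}(s) - \GGG_A(s) = \sbm{C_{e\L}\\ C_m}(sI-A)^{-1}M_h(sI-A_h)^{-1}B$, and both $\sbm{C_{e\L}\\ C_m}(sI-A)^{-1}$ (as an operator into $\cline^2$) and $(sI-A_h)^{-1}B$ (as an element of $\hline$) tend to $0$ as $\Re s\to\infty$ — the latter because $A_h$, being a bounded perturbation of $A$, still has eigenvalues $\sim i\pi^2 n^2$. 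Hence $\GGG_{A_h}-\GGG_A\to0$ and strict properness is inherited from the $A$-case.

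I expect the only real obstacle to be the bookkeeping in the second paragraph: making precise that replacing only $C_e$ by $C_{e\L}$ while keeping $C_m$ reproduces exactly the node supplied by Corollary~\ref{Merkel}. This is where one genuinely needs the zero-feedthrough property of the boundary observation $C_m$, which is why the small explicit resolvent computation cannot be avoided.
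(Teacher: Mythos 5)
Your proof is correct and follows essentially the same route as the paper's: Corollary \ref{Merkel} (fed by Lemma \ref{lem-A}, Proposition \ref{Brighitte} and the standing admissibility assumption on $C_e$) for the nodes built on $A$, then Proposition \ref{Trump} together with the identity \dref{Gad_Hareli} for the passage to $A_h$ and for strict properness. The only real difference is that you make explicit the $C$ versus $C_\Lambda$ bookkeeping on $Z$ --- in particular the zero-feedthrough resolvent computation showing $C_{m\Lambda}|_Z=C_m$ --- a point the paper dispatches with ``it is not difficult to show.''
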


\begin{proof} We start with the compatible system node $(A,[B_l\ B_r],
\sbm{C_e\\ C_m}\nm,0)$, whose control operator $B=[B_l\ B_r]$ is known
to be admissible from Proposition \ref{Brighitte} and whose
observation operator $C=\sbm{C_e\\ C_m}$ is known to be admissible
from our assumption on $C_e$ in Sect.~1 and from Lemma \ref{lem-A}. We
know from \dref{Haley} that $A$ satisfies the assumptions of Corollary
\ref{Merkel}. Hence, according to this corollary, $(A,[B_l\ B_r],
\sbm{C_e\\ C_m}\nm,0)$ is well-posed. According to Proposition
\ref{Trump}, $(A_h,[B_l\ B_r],\sbm{C_e\\ C_m}\nm,0)$ is also
well-posed. The well-posedness of this system node will not be
affected if we add another bounded component to its control operator,
changing it to $[g(\cdot)\ B_l\ B_r]$.

For the operator $C_m$ it is not difficult to show that its
extension $C_{m\L}$, when restricted to $Z$, is again $C_m$. However
for $C_e$, which has not been specified, we do not know if this is the
case. However, after having replaced $C_e$ with $C_{e\L}$, we can
apply Corollary \ref{Merkel} to $(A,[B_l\ B_r],\sbm{C_{e\L}\\ C_m}\nm,
0)$ to conclude that its transfer function $\GGG$ is strictly proper.
For the transfer function $\GGG_P$ of $(A_h,[B_l\ B_r],\sbm{C_{e\L}\\
C_m}\nm,0)$ we use the identity \dref{Gad_Hareli}, with $P$ being the
operator of pointwise multiplication with the function $h$, so that
$A_h=A+P$. Since the functions $C(sI-A)^{-1}$ and $(sI-A-P)^{-1}B$
(with $C=\sbm{C_e\\ C_m}$ and $B=[B_l\ B_r]$) are known to be
strictly proper, see for instance \cite[Theorem 4.3.7 and
Proposition 4.4.6]{obs_book}, it follows that $\GGG_P$ is strictly
proper. Finally, when adding the extra component to $B$, replacing the
earlier $B$ with $[g(\cdot)\ B_l\ B_r]$, then the transfer function
remains strictly proper, because the new component $g(\cdot)$ is a
bounded control operator.
\end{proof}

\section{State feedback regulation} 

\ \ \ In this section we will construct a state feedback operator
that solves the regulator problem. We denote $\Om=\{(x,\xi)\in\rline^2
\m|\ 0\leq \xi\leq x\leq 1\}$. First we introduce the {\em
backstepping transformation}
\begin{equation} \label{trans-z-u}
  v(x,t) \m=\m \Fscr[z(\cdot,t)](x,t):=z(x,t)-\int_0^x k(x,\xi)z(\xi,
  t)\dd \xi,
\end{equation}
where the kernel function $k:\Om\to\rline$ satisfies, for some fixed
$c_s>0$,
\begin{equation} \label{k-pde}
   \left\{\begin{array}{l} k_{xx}(x,\xi)-k_{\xi\xi}(x,\xi) \m=\m
   (h(\xi)+c_s)i k(x,\xi),\crr \disp k_\xi(x,0) + qik(x,0)=0,\crr
   \disp k(x,x) \m=\m -\frac{i}{2}\int_0^x(h(\xi)+c_s)\dd\xi-qi.
   \end{array}\right.
\end{equation}
By \cite[Theorem 2.1]{AS-Krstic-book}, the above system of equations
has a unique solution $k\in C^2(\Om)$. It can be shown \cite[Theorem
2.2]{AS-Krstic-book} that this transformation is boundedly
invertible, and \vspace{-2mm}
$$ \Fscr^{-1}[v(\cdot,t)](x,t) \m=\m v(x,t) + \int_0^x K(x,\xi)v(\xi,
   t) \dd\xi \m,\vspace{-1mm}$$
where the kernel function $K$ is also in $C^2(\Om)$. It is easy to
see from \dref{trans-z-u} and the above formula that $\Fscr$ and
$\Fscr^{-1}$ leave $C^1$ and $H^2$ functions invariant: 
\vspace{-1mm}
$$ \Fscr C^1[0,1] \m\subset\m C^1[0,1], \qquad
   \Fscr^{-1} C^1[0,1] \m\subset\m C^1[0,1],$$
\vspace{-4mm}
$$ \Fscr H^2(0,1) \m\subset\m H^2(0,1), \qquad
   \Fscr^{-1} H^2(0,1) \m\subset\m H^2(0,1) \m.$$

The proposed {\em state feedback law} (applied to classical solutions
of \dref{Sch}) is given by a continuous linear functional $F$ defined
on $H^2(0,1)$ plus a term applied to the exosystem state $w$:
\vspace{-1mm}
\begin{equation} \label{state-con}
   u(t) \m=\m F[z(\cdot,t)] + m_w^\top w(t) \m=\m k(1,1)z(1,t) +
   \int_0^1 k_x(1,\xi) z(\xi,t)\dd\xi + m_w^\top w(t),
\end{equation}
where $m_w^\top$ is a constant vector to be determined later. With
this feedback, the first equation in \dref{Sch-ab} becomes
\vspace{-1mm}
\begin{equation} \label{A+BF}
   \dot{z}(\cdot,t) \m=\m (A_h+B_r F)z(\cdot,t) + g(\cdot)d_1(t) +
   B_l d_2(t) + B_r m_w^\top w(t) \m.
\end{equation}
Under the state feedback \dref{state-con}, the classical solutions
of \dref{A+BF} must satisfy the following equations (which are
obtained by substituting \dref{state-con} into \dref{Sch}):
\vspace{-1mm}
\begin{equation} \label{Sch-state-close-z}
   \left\{\begin{array}{l} z_t(x,t) \m=\m -iz_{xx}(x,t)+h(x)z(x,t)+
   g(x)d_1(t), \crr\disp z_x(0,t) \m=\m -iqz(0,t)+d_2(t), \crr\disp
   z_x(1,t) \m=\m k(1,1)z(1,t)+\int_0^1k_x(1,\xi)z(\xi,t)\dd\xi +
   m_w^\top w(t),\crr\disp z(x,0) \m=\m z_0(x),\qquad y(t) \m=\m
   C_e \left[z(\cdot,t)\right] \m,\qquad y_m(t) \m=\m z(1,t) \m.
   \end{array}\right.
\end{equation}
Using the transformation \dref{trans-z-u} and omitting $y_m$, the
system \dref{Sch-state-close-z} becomes
\begin{equation} \label{Sch-state-close-z-u}
   \left\{\begin{array}{l} v_t(x,t) \m=\m -iv_{xx}(x,t)-c_sv(x,t)+
   \Fscr[g](x)d_1(t)-k(x,0)d_2(t),\crr\disp v_x(0,t)=d_2(t), \qquad
   v_x(1,t)=m_w^{\top}w(t), \crr\disp v(x,0) \m=\m z_0(x)-\int_0^x
   k(x,\xi)z_0(\xi)\dd\xi \m,\qquad y(t) \m=\m C_e \Fscr^{-1} \left[
   v(\cdot,t)\right] \m. \end{array}\right.
\end{equation}
In order to find the constant vector $m_w$ in \dref{state-con}, we
introduce the {\em error transformation} \vspace{-1mm}
\begin{equation} \label{tu-u-trans}
   \widetilde{v}(x,t) \m=\m v(x,t)-m(x)^{\top}w(t).
\end{equation}
We are searching for a function $m\in C^2([0,1];\rline^{n_w})$ for the
transformation \dref{tu-u-trans} so that the first three equations in
\dref{Sch-state-close-z-u} can be converted into the following (with
$x\in(0,1)$ and $t\geq 0$):
\begin{equation} \label{ORSch-tu}
   \left\{\begin{array}{l}\disp \widetilde{v}_{t}(x,t) \m=\m -i
   \widetilde{v}_{xx}(x,t)-c_s\widetilde{v}(x,t),  \crr\disp
   \widetilde{v}_x(0,t)=0, \qquad \widetilde{v}_x(1,t) \m=\m 0 \m.
   \end{array}\right.
\end{equation}
In other words, $\dot{\widetilde{v}}=\left(\aline-c_s I\right)
\widetilde{v}$, where $\aline$ is the following skew-adjoint operator:
\vspace{-2mm}
\begin{equation} \label{Alina}
  \aline f \m=\m -if'' \ \ \mbox{ with }\ \ D(\aline) \m=\m
  \{f\in H^2(0,1)\m|\ f'(0)=f'(1)=0\} \m.
\end{equation}
This $\aline$ is a simplified version of $A$ from \dref{A-def} that
corresponds to $q=0$. Thus, the differential equation of $\widetilde
{v}$ is exponentially stable in $\hline$.

Substituting \dref{tu-u-trans} into the first part of \dref{ORSch-tu},
we get
\begin{equation} \label{sol-m1}
   \begin{array}{l}\disp 0 \m=\m \widetilde{v}_{t}(x,t)+i
   \widetilde{v}_{xx}(x,t)+c_s\widetilde{v}(x,t)\crr\disp
   =\m v_{t}(x,t)-m(x)^\top Sw(t)+iv_{xx}(x,t)-im''(x)^\top w(t)
   +c_s v(x,t)-c_s m(x)^\top w(t)\crr\disp =\m -\left[ im''(x)^\top+
   m(x)^\top S+c_s m(x)^\top-\Fscr[g](x)p_1^{\top}+k(x,0)p_2^\top
   \right]w(t). \end{array}
\end{equation}
Here we have used $p_1,p_2$ from \dref{dis-produce}. Substituting
\dref{tu-u-trans} into the second part of \dref{ORSch-tu}, we get
(using \dref{Sch-state-close-z-u}) \vspace{-3mm}
\begin{equation} \label{sol-m2}
   \begin{array}{l}\disp 0 \m=\m \widetilde{v}_x(0,t)
   \m=\m v_{x}(0,t)-m'(0)^{\top}w(t) \m=\m \left[ p_2^{\top}-m'(0)
   ^{\top}\right]w(t). \end{array}
\end{equation}
Substituting \dref{tu-u-trans} into the third part of \dref{ORSch-tu},
we have (using \dref{Sch-state-close-z-u})
\begin{equation} \label{sol-m3}
   0 \m=\m \widetilde{v}_x(1,t) \m=\m v_x(1,t)-m'(1)^\top w(t)
   \m=\m \left[ m_w^{\top}-m'(1)^\top \right] w(t).
\end{equation}

Recall from Sect.~1 that $D(C_e)=H^2(0,1)$. By \dref{trans-z-u} and
\dref{tu-u-trans}, for any classical solution of the closed-loop
system, the output tracking error is, for every $t\geq 0$,
\begin{equation} \label{sol-m4}
   \begin{array}{l}\disp e_y(t) \m=\m y(t)-r(t) \m=\m C_e[z(\cdot,t)]
   - p_r^\top w(t) \m=\m C_e\Fscr^{-1}[v(\cdot,t)] - p_r^\top w(t)\crr
   \disp\hspace{0.9cm} =\m C_e\Fscr^{-1}[\widetilde{v}(\cdot,t)] +
   \left( C_e\Fscr^{-1}[m] -p_r^\top\right) w(t). \end{array}
\end{equation}
It follows from \dref{sol-m1}-\dref{sol-m4} that if the function $m$
satisfies the following {\em regulator equations}:
\begin{equation} \label{m-equ}
   \left\{\begin{array}{l}\disp im''(x)^\top + m(x)^{\top}S+c_s m(x)
   ^\top \m=\m \Fscr[g](x)p_1^{\top}-k(x,0)p_2^{\top},\crr\disp
   m'(0)^\top \m=\m p_2^{\top},\qquad C_e\Fscr^{-1}[m] \m=\m
   p_r^{\top}, \end{array}\right.
\end{equation}
and we choose $m_w$ in \dref{state-con} so that $m_w=m'(1)$, provided
that the equation \dref{m-equ} is solvable, then the system
\dref{Sch-state-close-z-u} is reduced to \dref{ORSch-tu}, and the
output tracking error for classical solutions of the closed-loop
system becomes, according to \dref{sol-m4},
\begin{equation} \label{sol-me}
   e_y(t) \m=\m y(t)-r(t) \m=\m C_e\Fscr^{-1}[\widetilde{v}
   (\cdot,t)] \m.
\end{equation}

\begin{remark} \label{what_is_F} {\rm
The state feedback operator from \dref{state-con} can be written
in the form \vspace{-2mm}
\begin{equation} \label{F}
   F \m=\m k(1,1)C_m + \Kscr \m,\vspace{-1mm}
\end{equation}
where $\Kscr$ is a bounded linear functional on $\hline$. This shows
(using Proposition \ref{keso_van}) that $F$ is an admissible
observation operator for the semigroups generated by $A$ and $A_h$.
We have from \dref{k-pde} \vspace{-2mm}
$$ k(1,1) \m=\m -\frac{i}{2}\int_0^1 h(\xi) \dd\xi
   -i \left[ \frac{c_s}{2}+q \right] \m,$$
and clearly $C_m=-iB_r^*$. Thus, we can write \vspace{-2mm}
$$ F \m=\m -\left[\frac{c_s}{2}+q+\int_0^1 h(\xi) \dd\xi \right]
   B_r^* + \Kscr \m,$$
which shows that the dominant component of this feedback is
collocated.}
\end{remark}

\begin{remark} \label{Kurdistan} {\rm
The compatible system node $\Sigma=(A_h,[g(\cdot)\ B_l\ B_r],\sbm{C_e
\\ C_m}\nm,0)$ represents the systems \dref{Sch-ab} and also
\dref{Sch}, see Corollary \ref{ProAB}. This is a regular linear
system, according to Proposition \ref{keso_van}. Since $F$ satisfies
\dref{F}, it follows that also the system node $\Sigma_f=(A_h,[g
(\cdot)\ B_l\ B_r],\sbm{C_e \\ C_m\\ F}\nm,0)$ is regular (with input
and output space $\cline^3$). This $\Sigma_f$ has been obtained by
adding a third output to $\Sigma$, namely, $u_f(t)=F_\L z(t)$ (for
classical solutions we may write $u_f(t)=F z(t)$). Now the state
feedback law \dref{state-con} can be written in the abstract output
feedback form that fits Proposition \dref{Ruth}:
\vspace{-1mm}
\begin{equation} \label{Kf}
   \bbm{d_1(t)\\ d_2(t)\\ u(t)} \m= \Kf \bbm{y(t)\\ y_m(t)\\
   u_f(t)} + \rho(t)\m,\   \mbox{ where }\ \  \Kf \m= \bbm{ 0 &
   0 & 0\\ 0 & 0 & 0\\ 0 & 0 & 1} \m,\  \rho(t) \m= \bbm{d_1(t)
   \\ d_2(t)\\ m_w^\top w(t)} \m,
\end{equation}
and $\rho$ is the new input signal of the closed-loop system.}
\end{remark}

\begin{proposition} \label{Simy}
With the notation of Remark {\rm\ref{Kurdistan}}, define \m
$A_{cl}:D(A_{cl})\rarrow\hline$ as follows: \vspace{-2mm}
$$ A_{cl} \m=\m A_h+B_r F_\L \m,\qquad D(A_{cl}) \m=\m \left\{x\in
   H^2(0,1)\ |\ A_hx+B_r F_\L x\in X \m\right\} \m.\vspace{-1mm}$$

The closed-loop system \m $\Sigma_{cl}$ obtained from \m $\Sigma$ with
the feedback law {\rm\dref{state-con}} (described by the equations
{\rm \dref{A+BF}} and the second line of {\rm\dref{Sch-ab}}) is a
regular linear system $\Sigma_{cl}$ with semigroup generator $A_{cl}$,
control operator $B=[g\ B_l\ B_r]$, observation operator $C=\sbm{C_e\\
C_m}$ (restricted to $D(A_{cl})$) and its feedthrough operator $D$ is
the same as for the open-loop system \m $\Sigma$.
\end{proposition}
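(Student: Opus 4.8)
The plan is to recognize Proposition \ref{Simy} as a direct application of the output feedback machinery recalled in Proposition \ref{Ruth}, applied to the augmented regular system $\Sigma_f$ of Remark \ref{Kurdistan}. First I would note that $\Sigma_f=(A_h,[g(\cdot)\ B_l\ B_r],\sbm{C_e\\ C_m\\ F}\nm,0)$ is regular by Remark \ref{Kurdistan} (which relies on \dref{F} and Proposition \ref{keso_van}), with input and output space $\cline^3$ and feedthrough operator $D$ having zero last column (since the third output $u_f=F_\L z$ involves no feedthrough), so the hypothesis $\Kf D=0$ in Proposition \ref{Ruth} holds for the $\Kf$ displayed in \dref{Kf}. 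I would then have to verify the remaining hypothesis of Proposition \ref{Ruth}: that $I-\Kf\GGG_f(s)$ has a uniformly bounded inverse on some right half-plane, where $\GGG_f$ is the transfer function of $\Sigma_f$. Because $\Kf$ only picks out the $(3,3)$ entry, $I-\Kf\GGG_f(s)$ is block-triangular with the only nontrivial scalar being $1-\GGG_{f,33}(s)$, and $\GGG_{f,33}(s)=F(sI-A_h)^{-1}B_r$ is strictly proper by Proposition \ref{keso_van} (the system node $(A_h,B_r,F_\L,0)$ inherits strict properness), hence $\to 0$ as $\Re s\to\infty$; therefore $1-\GGG_{f,33}(s)$ is bounded away from zero and its reciprocal is uniformly bounded on a suitable right half-plane.

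With these hypotheses in place, Proposition \ref{Ruth} yields that the closed-loop system is a regular linear system with semigroup generator $A_{cl}=A_h+B_r\Kf_{(3,3)}F_\L=A_h+B_r F_\L$ on the indicated domain, with the same control operator $B=[g\ B_l\ B_r]$ (the feedback $\Kf$ from \dref{Kf} does not alter $B$), observation operator $(I+D\Kf)\CL$ restricted to $D(A_{cl})$, and feedthrough operator $D$ unchanged. Next I would identify the observation operator: since we only care about the outputs $y$ and $y_m$ (we have ``omitted $y_m$'' only in intermediate computations but in $\Sigma_{cl}$ we retain $C=\sbm{C_e\\ C_m}$), the relevant rows of $(I+D\Kf)\CL$ reduce to $\sbm{C_e\\ C_m}$ itself, because the feedthrough $D$ of $\Sigma$ is zero on the first two outputs $y,y_m$ (Proposition \ref{keso_van} gives $D=0$ for $\Sigma$), so $(I+D\Kf)$ acts trivially on those rows. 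Thus the closed-loop observation operator is exactly $\sbm{C_e\\ C_m}$ restricted to $D(A_{cl})$, and the feedthrough of $\Sigma_{cl}$ equals that of $\Sigma$, namely $0$ after replacing $C_e$ by $C_{e\L}$ (or $D$ in the stated generality).

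The main obstacle I anticipate is bookkeeping rather than analysis: one must be careful that the $3\times 3$ feedback $\Kf$ of \dref{Kf} is applied to $\Sigma_f$ (with its three outputs $y,y_m,u_f$ and three inputs $d_1,d_2,u$), and then verify that after closing the loop the ``extra'' output $u_f$ and the passive inputs $d_1,d_2$ are handled consistently, so that the resulting system, when we keep only $(y,y_m)$ as outputs and $(d_1,d_2,u)$ — now $(d_1,d_2,\rho_3)$ with $\rho_3=m_w^\top w$ — as the effective external inputs, is precisely $\Sigma_{cl}$ as described. Concretely, I would spell out $A_{cl}=A+P+B_r F_\L$ with $P$ bounded multiplication by $h$, confirm $D(A_{cl})\subset H^2(0,1)=Z$ matches the formula in the statement, and invoke the last sentence of Proposition \ref{Ruth} to get admissibility of $\sbm{C_e\\ C_m}$ for the semigroup generated by $A_{cl}$. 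Everything else is routine: regularity and well-posedness are transported through Proposition \ref{Ruth}, and the equality of feedthrough operators is immediate from the formula $\GGG_{cl}=\GGG(I-\Kf\GGG)^{-1}$ evaluated in the limit $\Re s\to\infty$, where the strictly proper component $\GGG_{f,33}$ vanishes.
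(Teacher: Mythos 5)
Your overall strategy---augment $\Sigma$ to $\Sigma_f$ by the extra output $u_f=F_\L z$ and close the loop via Proposition \ref{Ruth} with the $\Kf$ of \dref{Kf}, checking invertibility of $I-\Kf\GGG_f$ through strict properness of the $(3,3)$ entry---is exactly the paper's. The genuine gap is in your identification of the closed-loop observation operator, which rests on the assertion that ``Proposition \ref{keso_van} gives $D=0$ for $\Sigma$.'' That proposition yields strict properness only \emph{after} $C_e$ is replaced by its extension $C_{e\L}$, and the paper emphasizes that $C_{e\L}$ restricted to $Z=H^2(0,1)$ need not agree with the unspecified $C_e$. Hence the feedthrough operator of $\Sigma_f$, in the regular representation $\bigl(A_h,[g\ B_l\ B_r],\sbm{C_{e\L}\\ C_m\\ F},D_0\bigr)$ that Proposition \ref{Ruth} actually requires (its hypothesis is that the fourth entry of the quadruple \emph{is} the feedthrough operator), is $D_0=\sbm{0 & D_1 & D_2\\ 0&0&0\\ 0&0&0}$ with $D_1,D_2$ possibly nonzero. (Incidentally, $\Kf D_0=0$ holds because the third \emph{row} of $D_0$ vanishes; the last \emph{column} contains $D_2$ and need not vanish, so your ``zero last column'' is not correct, though your parenthetical reason identifies the right fact.)

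Because of this, Proposition \ref{Ruth} returns the closed-loop observation operator $(I+D_0\Kf)\CL=\sbm{C_{e\L}+D_2F_\L\\ C_m\\ F_\L}$, not $\sbm{C_{e\L}\\ C_m\\ F_\L}$: the correction term $D_2F_\L$ in the first row does not disappear for free, and the claim that ``$(I+D\Kf)$ acts trivially on those rows'' is based on a false premise. To reach the stated conclusion one still has to verify, as the paper does in its final ``short computation,'' that $C_{e\L}+D_2F$ and $C_e$ have equal restrictions to $D(A_{cl})$, and that the feedthrough of $\Sigma_{cl}$ equals the first two rows of $D_0$, i.e.\ the feedthrough of $\Sigma$. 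Your closing limit argument $\GGG_{cl}(\l)=\GGG_f(\l)(I-\Kf\GGG_f(\l))^{-1}\to D_0$ does handle the feedthrough equality correctly, but the identification of the observation operator remains an unproved step in your write-up.
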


\begin{proof}
We know from Proposition \ref{keso_van} that the compatible system
node $(A_h,[g(\cdot)\ B_l\ B_r],\sbm{C_{e\L}\\ C_m}\nm,0)$ is
well-posed and strictly proper. Since $F$ satisfies \dref{F}, it
follows that also \vspace{-2mm}
$$ \Sigma_{f0} \m=\m \left( A_h,[g(\cdot)\ B_l\ B_r],\sbm{C_{e\L} \\
   C_m\\ F}\nm,0 \right)$$
is well-posed and strictly proper. This regular system node differs
from $\Sigma_f$ in Remark \ref{Kurdistan} only in its feedthrough
operator: the feedthrough operator of $\Sigma_{f0}$ is zero,
while for \m $\Sigma_f$ it is of the form
$$ D_0 \m=\m \bbm{ 0 & D_1 & D_2\\ 0 & 0 & 0\\ 0 & 0 & 0} \m,\ \
   \mbox{ where }\ \ \ \ \begin{array}{l} D_1 \m=\m \lim_{\l\rarrow
   \infty,\m\l\in\rline} C_e(\l I-A_h)^{-1} B_l \m,\crr D_2 \m=\m
   \lim_{\l\rarrow\infty,\m\l\in\rline} C_e(\l I-A_h)^{-1} B_r \m.
   \end{array}$$
The limits $D_1$ and $D_2$ could be any numbers in $\cline$, because
$C_e$ has not been specified. According to the last part of
Proposition \ref{Savta}, the system \m $\Sigma_f$ is equivalent to
$$ \Sigma_f \m=\m \left( A_h,[g(\cdot)\ B_l\ B_r],\sbm{C_{e\L} \\ C_m
   \\ F}\nm,D_0 \right)$$
in the sense that these systems have the same system operator and the
same transfer function. (According to the theory of system nodes,
having the same system operator means that they are the same system.)
We denote by $\GGG$ and $\GGG_0$ the transfer functions of \m
$\Sigma_f$ and $\Sigma_{f0}$ respectively, so that $\GGG(s)=\GGG_0
(s)+D_0$. We see that $I-\Kf\GGG(s)$ has a uniformly bounded inverse
on some right half-plane, because $\Kf\GGG(s)=\Kf\GGG_0(s)$ and
$\GGG_0$ is strictly proper. Note that $\Kf D_0=0$. Thus, we can 
apply Proposition \ref{Ruth} to conclude that \m $\Sigma_f$ with the
feedback law \dref{state-con}, which is equivalent to \dref{Kf},
leads to a well-posed and regular closed-loop system $\Sigma_{cl,f}$.

According to Proposition \ref{Ruth}, after a little computation, we
find that \vspace{-2mm}
$$ \Sigma_{cl,f} \m=\m \left( A_{cl},[g(\cdot)\ B_l\ B_r],\sbm{
   C_{e\L}+D_2 F\\ C_m\\ F}\nm,D_0 \right) \m,$$
where $A_{cl}$ is defined in the proposition. Another short
computation shows that the above system node $\Sigma_{cl,f}$ is
equivalent to \vspace{-2mm}
$$ \Sigma_{cl,f} \m=\m \left( A_{cl},[g(\cdot)\ B_l\ B_r],\sbm{
   C_e\\ C_m\\ F}\nm,0 \right) \m.$$
If we ignore the third output of this system, $u_f$ introduced in
Remark \ref{Kurdistan}, then we obtain the closed-loop system
$\Sigma_{cl}$ stated in the proposition. We remark that $D$ consists
of the first two lines of $D_0$ and that the restrictions of
$C_{e\L}+D_2 F$ and of $C_e$ to $D(A_{cl})$ are equal. \end{proof}

\begin{proposition} \label{utolso}
We use the notation of Proposition {\rm\ref{Simy}}. Assume that the
regulator equations {\rm\dref{m-equ}} have a solution $m$ and $m_w=
m'(1)$, so that {\rm\dref{ORSch-tu}} and {\rm\dref{sol-me}} hold.

Then $C_e\Fscr^{-1}$ is an admissible observation operator for the
group generated by $\aline$ from {\rm\dref{Alina}}.
\end{proposition}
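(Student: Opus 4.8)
The plan is to push the admissibility of $C_e$ for the closed-loop semigroup $(e^{A_{cl}t})_{t\geq0}$ --- which is essentially free from Proposition~\ref{Simy} --- back across the backstepping transformation $\Fscr$, exploiting that the feedback $F$ was designed precisely so that $\Fscr$ conjugates the closed-loop plant, with $d_1,d_2$ and $w$ switched off, into the target system $\dot{\widetilde v}=(\aline-c_sI)\widetilde v$ of \dref{ORSch-tu}. Thus the first and only substantial task is to establish the intertwining identity
$$ \Fscr\m e^{A_{cl}t} \m=\m e^{(\aline-c_sI)t}\m\Fscr \FORALL t\geq0 \m,\qquad\text{equivalently}\qquad \Fscr^{-1}e^{\aline t} \m=\m e^{c_st}\m e^{A_{cl}t}\m\Fscr^{-1} \m. $$

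I would obtain this at the level of the generators. The operators $\Fscr,\Fscr^{-1}$ are bounded on $\hline$ and preserve $H^2(0,1)$, and for $f\in H^2(0,1)$ a direct differentiation of \dref{trans-z-u} together with \dref{k-pde} gives $(\Fscr f)'(0)=f'(0)+iqf(0)$ and $(\Fscr f)'(1)=f'(1)-F[f]$ (recall $F$ from \dref{state-con}); hence $\Fscr$ carries a function with $f'(0)=-iqf(0)$, $f'(1)=F[f]$ to one satisfying the Neumann conditions defining $D(\aline)$, and $\Fscr^{-1}$ does the reverse. Since, by the description of $D(A_{cl})$ in Proposition~\ref{Simy} (via the boundary control calculus of \cite[Sect.~10.1]{obs_book}), $D(A_{cl})$ is exactly the set of $f\in H^2(0,1)$ with $f'(0)=-iqf(0)$ and $f'(1)=F[f]$, this shows that $\Fscr$ restricts to a bijection $D(A_{cl})\to D(\aline)$; and the backstepping reduction behind \dref{Sch-state-close-z-u} (taken with $d_1=d_2=0$, $w\equiv0$) shows $\Fscr A_{cl}=(\aline-c_sI)\Fscr$ on $D(A_{cl})$. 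The intertwining of the two semigroups then follows by the standard similarity argument. (Alternatively, the identity can be read off from \dref{sol-m1}--\dref{sol-me} with exosystem and disturbances set to zero, applied to the classical solutions of $\Sigma_{cl}$ furnished by Proposition~\ref{smooth_comp}, and then extended to all of $\hline$ by density and boundedness of $\Fscr^{-1}$.)

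Granting the identity, the admissibility estimate is immediate. Fix $\tau>0$ and $f\in D(\aline)$. Then $e^{\aline t}f\in D(\aline)\subset H^2(0,1)$ for every $t$, $g:=\Fscr^{-1}f\in D(A_{cl})$, and $C_e\Fscr^{-1}e^{\aline t}f=e^{c_st}\m C_e\m e^{A_{cl}t}g$, so
$$ \int_0^\tau\big|C_e\Fscr^{-1}e^{\aline t}f\big|^2\dd t \m\leq\m e^{2c_s\tau}\int_0^\tau\big|C_e\m e^{A_{cl}t}g\big|^2\dd t \m. $$
By Proposition~\ref{Simy} the closed-loop system $\Sigma_{cl}$ is a regular (hence well-posed) system node with generator $A_{cl}$ and observation operator $\sbm{C_e\\ C_m}$, so Proposition~\ref{Macron} yields $m'_\tau>0$ with $\int_0^\tau|C_e\m e^{A_{cl}t}g|^2\dd t\leq m'_\tau\|g\|^2$ for all $g\in D(A_{cl})$. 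Using $\|g\|\leq\|\Fscr^{-1}\|_{\Lscr(\hline)}\|f\|$, we conclude
$$ \int_0^\tau\big|C_e\Fscr^{-1}e^{\aline t}f\big|^2\dd t \m\leq\m e^{2c_s\tau}\m m'_\tau\m\|\Fscr^{-1}\|_{\Lscr(\hline)}^2\m\|f\|^2 \FORALL f\in D(\aline)\m, $$
which is the required admissibility inequality for $C_e\Fscr^{-1}$ and the semigroup generated by $\aline$; that $\aline$ is skew-adjoint (so $e^{\aline t}$ is actually a unitary group) causes no extra issue, cf.\ Lemma~\ref{lem-A}, so $C_e\Fscr^{-1}$ is admissible for the group generated by $\aline$.

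The real work lies entirely in the first step: confirming that $\Fscr$ intertwines $A_{cl}$ with $\aline-c_sI$, in particular that it restricts to a bijection of $D(A_{cl})$ onto $D(\aline)$. This is exactly the point of the backstepping design \dref{k-pde}--\dref{state-con} and of the algebra in \dref{sol-m1}--\dref{sol-me}, but it must be phrased carefully either at the level of the unbounded generators or through classical solutions; once it is in place, the rest is a one-line change of variables plus Proposition~\ref{Simy}.
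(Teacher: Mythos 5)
Your proof is correct and follows essentially the same route as the paper: both rest on the well-posedness of the closed-loop system $\Sigma_{cl}$ (which makes $C_e$ admissible for the semigroup $e^{A_{cl}t}$) combined with the fact that $\Fscr$ intertwines $e^{A_{cl}t}$ with $e^{(\aline-c_sI)t}$, so that the admissibility estimate transfers across the bounded, boundedly invertible map $\Fscr$. The paper packages this as an output estimate for the cascade of $\Sigma_{cl}$ with the exosystem specialized to $w(0)=0$ (where $e_y=C_e\Fscr^{-1}\widetilde v$ and $\widetilde v=\Fscr z$ evolves under $\aline-c_sI$), while you make the intertwining explicit at the generator level --- a step the paper glosses over --- but the substance is identical.
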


\begin{proof}
Consider the cascade connection of the closed-loop system
$\Sigma_{cl}$ with the exosystem from \dref{dis-produce} according to
\dref{A+BF}, so that all three inputs of $\Sigma_{cl}$ come from the
finite-dimensional exosystem. Since $\Sigma_{cl}$ is well-posed, it
follows that this cascade connection is again well-posed, implying
that for any $T>0$ there exists an $m_T>0$ such that \vspace{-2mm}
$$ \int_0^T \| y(t) \|^2 \m\leq\m m_T \left\| \bbm{z(\cdot,0)\\ w(0)}
   \right\|^2 \m.$$
Clearly a similar estimate holds for the signal $r$, and using
\dref{sol-me} it follows that a similar estimate holds for $e_y$: for
some $\tilde m_T>0$, \vspace{-2mm}
$$ \int_0^T \| e_y(t) \|^2 \m\leq\m \tilde m_T \left\| \bbm{z(\cdot,0)
   \\ w(0)} \right\|^2 \m.$$
Now consider the special case $w(0)=0$. Then according to
\dref{trans-z-u} and \dref{tu-u-trans}, we have $\widetilde v=\Fscr z$
and according to \dref{ORSch-tu} and \dref{sol-me} we have \m $\dot
{\widetilde v}(t)=(\aline-c_s I)\widetilde v(t)$ and $e_y(t)=C_e
\Fscr^{-1}\widetilde v(t)$. From
$$ \int_0^T \| e_y(t) \|^2 \m\leq\m \tilde m_T \|z(0)\|^2
   \m\leq\m \tilde m_T \|\Fscr^{-1}\|^2 \|\widetilde v(0)\|^2 \m.$$
This shows that $C_e\Fscr^{-1}$ is an admissible observation
operator for the group generated by $\aline-c_s I$ (equivalently, for
the group generated by $\aline$).
\end{proof}

\begin{remark} {\rm Let $\widetilde v$ satisfy \dref {ORSch-tu} and
denote $\widehat{v}(\cdot,t):=\Fscr^{-1}[\widetilde{v}(\cdot,t)]$.
Then $\widehat{v}(x,t)$ is governed by
$$ \left\{\begin{array}{l}\disp \widehat{v}_{t}(x,t) \m=\m -i
   \widehat{v}_{xx}(x,t)+h(x)\widehat{v}(x,t),  \crr\disp
   \widehat{v}_x(0,t)=-iq\widehat{v}(0,t), \qquad\crr\disp
   \widehat{v}_x(1,t) \m=\m K(1,1)\widetilde{v}(1,t)+\int_0^1
   K_x(1,\xi)\widetilde{v}(\xi,t)\dd\xi = K(1,1)\bigg[\widehat{v}
   (1,t)\crr\disp\m\m \ \ \ -\int_0^1k(1,\xi)\widehat{v}
   (\xi,t)\dd\xi\bigg] + \int_0^1K_x(1,\xi)\Big(\widehat{v}(\xi,t)
   -\int_0^1k(\xi,\zeta)\widehat{v}(\zeta,t)\dd\zeta\Big))\dd\xi \m.
   \end{array}\right.$$ }
\end{remark}

We state a lemma which describes the solvability condition of the
regulator equation \dref{m-equ}. This lemma is related to
\cite[Theorem 5.2]{Viv2014tac}.

\begin{lemma}
The regulator equation {\rm\dref{m-equ}} has a unique solution if and
only if \m $C_e\Fscr^{-1}[\cosh(\sqrt{-i(\l+c_s)}\cdot)]\neq 0$, for
all $\l\in\sigma(S)$.
\end{lemma}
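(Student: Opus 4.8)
The plan is to read \dref{m-equ} as a two-point boundary value problem for the $\cline^{n_w}$-valued function $m$, to diagonalize $S$ so that it splits into $n_w$ scalar problems, and to read off from each of those when it is uniquely solvable. Put $G(x):=p_1\Fscr[g](x)-p_2k(x,0)$; this is continuous on $[0,1]$, since $g\in C[0,1]$, $\Fscr$ preserves continuity and $k\in C^2(\Om)$. Transposing, the first line of \dref{m-equ} becomes the linear system $im''(x)+(S^{\top}+c_sI)m(x)=G(x)$ on $[0,1]$, to be solved together with $m'(0)=p_2$ and $C_e\Fscr^{-1}[m]=p_r$. Using that $S$, hence $S^{\top}$, is diagonalizable, write $S^{\top}=Q\L Q^{-1}$ with $\L={\rm diag}(\l_1,\dots,\l_{n_w})$, where $\{\l_1,\dots,\l_{n_w}\}$ is $\sigma(S)$ repeated according to multiplicity, and substitute $n:=Q^{-1}m$. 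Because $Q$ is a \emph{constant} matrix while $C_e$, $\Fscr$ and $\Fscr^{-1}$ act only in the $x$-variable, the problem decouples into the $n_w$ scalar boundary value problems
$$ in_j''(x)+(\l_j+c_s)n_j(x) \m=\m (Q^{-1}G)_j(x) \m,\qquad
   n_j'(0) \m=\m (Q^{-1}p_2)_j \m,\qquad C_e\Fscr^{-1}[n_j] \m=\m
   (Q^{-1}p_r)_j \m,$$
for $j=1,\dots,n_w$. Since $m\mapsto n=Q^{-1}m$ is a linear isomorphism carrying the data of \dref{m-equ} onto the data of these problems, \dref{m-equ} has a unique solution if and only if each of the $n_w$ scalar problems does.

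Now fix $j$. A direct computation, consistent with the normalization in \dref{ORSch-tu}, shows that the homogeneous version of the $j$-th equation above has $\cosh(\sqrt{-i(\l_j+c_s)}\m x)$ and $\sinh(\sqrt{-i(\l_j+c_s)}\m x)$ as a fundamental system; since $\l_j\in i\rline$ and $c_s>0$ we have $-i(\l_j+c_s)\neq0$, so these two functions really span the solution space, the function $\cosh(\sqrt{-i(\l_j+c_s)}\m\cdot)$ is independent of the choice of square root ($\cosh$ being even) and it lies in $C^\infty[0,1]\subset H^2(0,1)=D(C_e)$, a space that $\Fscr^{-1}$ maps into itself. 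Let $n_j^{*}$ be the unique solution of the inhomogeneous equation with $n_j^{*}(0)=0$ and $(n_j^{*})'(0)=(Q^{-1}p_2)_j$; it is of class $C^2$, hence in $H^2(0,1)$, because $(Q^{-1}G)_j$ is continuous. Any solution $n_j$ has the same prescribed derivative at $0$, so $n_j-n_j^{*}$ is a homogeneous solution with vanishing derivative at $0$, hence a multiple of $\cosh(\sqrt{-i(\l_j+c_s)}\m\cdot)$; write $n_j=n_j^{*}+\alpha_j\cosh(\sqrt{-i(\l_j+c_s)}\m\cdot)$ with $\alpha_j\in\cline$. Then the remaining condition becomes
$$ \alpha_j\m C_e\Fscr^{-1}\big[\cosh(\sqrt{-i(\l_j+c_s)}\m\cdot)\big]
   \m=\m (Q^{-1}p_r)_j-C_e\Fscr^{-1}\big[n_j^{*}\big] \m,$$
which has a unique solution $\alpha_j$ exactly when $C_e\Fscr^{-1}[\cosh(\sqrt{-i(\l_j+c_s)}\m\cdot)]\neq0$; if that coefficient vanishes, the $j$-th problem has either no solution or a whole affine line of solutions, hence is not uniquely solvable. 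Combining over $j$ (an eigenvalue $\l$ of higher multiplicity merely reproduces the same scalar condition), we conclude that \dref{m-equ} has a unique solution if and only if $C_e\Fscr^{-1}[\cosh(\sqrt{-i(\l+c_s)}\m\cdot)]\neq0$ for every $\l\in\sigma(S)$.

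I do not expect a deep obstacle; once this reformulation is in place, the remainder is a linear-algebra argument, in the spirit of \cite[Theorem~5.2]{Viv2014tac}. The step that needs care is the decoupling: it is legitimate precisely because, in the side conditions $m'(0)=p_2$ and $C_e\Fscr^{-1}[m]=p_r$, the $n_w$ components of $m$ are coupled only through a constant, $x$-independent matrix and not through any $x$-dependent mixing, so those conditions transform cleanly under $m=Qn$ into the componentwise conditions above; correspondingly one must check that each function to which $C_e$ is applied genuinely lies in $D(C_e)=H^2(0,1)$, which follows from $\Fscr^{-1}H^2(0,1)\subset H^2(0,1)$ together with the $C^2$-regularity of the ODE solutions. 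If one prefers to avoid diagonalizing $S$, the same conclusion follows by observing that \dref{m-equ} is uniquely solvable iff the $n_w\times n_w$ matrix $C_e\Fscr^{-1}[\cosh(\sqrt{-i(S^{\top}+c_sI)}\m\cdot)]$ is invertible, and then diagonalizing $S^{\top}$ inside this matrix, which shows that its determinant equals $\prod_{j=1}^{n_w}C_e\Fscr^{-1}[\cosh(\sqrt{-i(\l_j+c_s)}\m\cdot)]$.
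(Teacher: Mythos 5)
Your proof is correct and follows essentially the same route as the paper's: diagonalize $S$ (you via $m=Qn$ with $S^{\top}=Q\Lambda Q^{-1}$, the paper by right-multiplying \dref{m-equ} with the eigenvectors $v_j$ of $S$ --- the same change of variables, since $n_j=v_j^{\top}m=\bar m_j$), reduce to $n_w$ scalar two-point problems in which the Neumann condition at $0$ fixes one constant, and observe that the remaining free constant, which multiplies $\cosh(\sqrt{-i(\lambda_j+c_s)}\,\cdot\,)$, is uniquely determined by the condition $C_e\Fscr^{-1}[\,\cdot\,]=(Q^{-1}p_r)_j$ exactly when $C_e\Fscr^{-1}[\cosh(\sqrt{-i(\lambda_j+c_s)}\,\cdot\,)]\neq 0$. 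The only (harmless) deviations are that you package the particular solution via prescribed initial data instead of the explicit variation-of-parameters integral, and that your remark that $\lambda_j+c_s\neq 0$ always holds (the eigenvalues of $S$ being purely imaginary and $c_s>0$) lets you skip the degenerate case $\lambda_j+c_s=0$ that the paper treats separately with the fundamental system $\{1,x\}$.
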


\begin{proof}
Since $S$ is diagonalizable, there exists a square matrix 
$$V \m=\m [v_1,v_2,\ldots v_{n_w}] \m,\ \ \ v_j\in\rline^{n_w} \m,$$
such that $V^{-1}SV={\rm diag}(\l_1,\l_2,\ldots\l_{n_w})$, where 
$\l_j$, $j=1,2,\ldots n_w$ are the eigenvalues of $S$. Multiply with 
$v_j$ from the right in \dref{m-equ} to obtain
\begin{equation} \label{m-equ-vi}
   \left\{\begin{array}{l}\disp \bar{m}''_j(x)-i\l_j\bar{m}_j(x)-ic_s
   \bar{m}_j(x) \m=\m -i[\Fscr[g](x)p_1^{\top}v_j-k(x,0)p_2^{\top}v_j],
   \crr\disp \bar{m}'_j(0) \m=\m p_2^{\top}v_j,\qquad C_e\Fscr^{-1}
   [\bar{m}_j]=p_r^{\top}v_j,\quad j=1,2,\ldots n_w, \end{array}\right.
\end{equation}
where $\bar{m}_j=m(x)^{\top}v_j$, $j=1,2,\ldots,n_w$. If $\l_j+c_s\neq
0$, the general solution of the first equation of \dref{m-equ-vi} is
of the following form (with the coefficients $\gamma_1,\gamma_2$ to be
determined):
$$ \begin{array}{l}\disp \bar{m}_j(x) \m=\m \gamma_1\cosh(\sqrt{-i
   (\l_j+c_s)}x)+\gamma_2\frac{\sinh(\sqrt{-i(\l_j+c_s)}x)}{\sqrt{-i
   (\l_j+c_s)}}\\ \disp\hspace{1.3cm} -i\int_0^x \big[\Fscr[g](\xi)
   p_1^{\top}v_j-k(\xi,0)p_2^{\top}v_j\big] \frac{\sinh(\sqrt{-i(\l_j
   +c_s)}(x-\xi))}{\sqrt{-i(\l_j+c_s)}} \m\dd\xi \m. \end{array}$$
Substituting this into the boundary conditions in \dref{m-equ-vi}, we
get \vspace{-1mm}
\begin{equation} \label{ga1ga2-coef}
   \left\{\begin{array}{l}\disp \gamma_2=p_2^{\top}v_j,\\ \disp
   \gamma_1 C_e\Fscr^{-1}[\cosh(\sqrt{-i(\l_j+c_s)}\cdot)] +\gamma_2
   C_e\Fscr^{-1}\bigg[\frac{\sinh(\sqrt{-i(\l_i+c_s)}x)}{\sqrt{-i
   (\l_j+c_s)}}\bigg]\\ \disp\hspace{0.3cm} + C_e\Fscr^{-1} \bigg[-i
   \int_0^\cdot \big[\Fscr[g](\xi)p_1^{\top}v_i-k(\xi,0)p_2^{\top}v_j
   \big]\\ \hspace{4cm} \disp\times\frac{\sinh(\sqrt{-i(\l_j+c_s)}
   (\cdot-\xi))}{\sqrt{-i(\l_j+c_s)}} \dd\xi\bigg] \m=\m p_r^{\top} 
   v_j. \end{array}\right.
\end{equation}
It is obvious that the coefficients $\gamma_1$, $\gamma_2$ can be
uniquely determined by equation \dref{ga1ga2-coef} if and only if
$C_e\Fscr^{-1}[\cosh(\sqrt{-i(\l_j+c_s)}\cdot)]\neq 0$.

If $\l_j+c_s=0$, then the solutions of the first equation in
\dref{m-equ-vi} are of the form
\begin{equation} \label{mi-gens-0}
   \bar{m}_j(x) \m=\m \gamma_1+\gamma_2x-i\int_0^x(x-\xi)[\Fscr[g]
   (\xi)p_1^{\top}v_j-k(\xi,0)p_2^{\top}v_j]\dd\xi,
\end{equation}
where $\gamma_1$, $\gamma_2$ are the coefficients to be determined.
Substituting \dref{mi-gens-0} into the boundary conditions in
\dref{m-equ-vi}, we get $\gamma_2=p_2^{\top}v_i$ and, denoting by
$\eta$ the identity function, $\eta(x)=x$, \vspace{-2mm}
\begin{equation}\begin{array}{l}\disp
   \gamma_1 C_e\Fscr^{-1}[1] = -\gamma_2 C_e\Fscr^{-1}[\eta] +
   p_r^{\top}v_j\crr\disp\hspace{1.5cm}- C_e
   \Fscr^{-1} \bigg[-i\int_0^\cdot (\cdot-\xi)[\Fscr[g](\xi)
   p_1^{\top}v_j-k(\xi,0)p_2^{\top}v_j]\dd\xi\bigg].\end{array}
\end{equation}
It is clear that $\gamma_1$ can be uniquely determined from this
equation if and only if $C_e\Fscr^{-1}[1]\neq 0$.
\end{proof}

Now, with the state feedback, we turn to the closed-loop system which
is composed of \dref{Sch}, \dref{dis-produce}, \dref{state-con} and
\dref{sol-me}, that is
\begin{equation}\label{Sch-state-closed}
   \left\{\begin{array}{l} z_t(x,t) \m=\m -iz_{xx}(x,t)+h(x)z(x,t)+
   g(x)p^\top_1 w(t),\crr \disp z_x(0,t) \m=\m -iqz(0,t)+p^\top_2
   w(t),\crr\disp z_x(1,t) \m=\m k(1,1) z(1,t) + \int_0^1 k_x(1,\xi)
   z(\xi,t)\dd\xi + m_w^\top w(t),\crr\disp z(\cdot,0) \m=\m z_0
   (\cdot)\in L^2[0,1],\crr\disp \dot{w}(t) \m=\m Sw(t),\qquad w(0)=
   w_0\in\rline^{n_w},\crr e_y(t) \m=\m y(t)-r(t) \m=\m C_e
   [z(\cdot,t)] - p_r(t)^\top w(t). \end{array} \right.
\end{equation}

The following is the main result of this section.

\begin{theorem} \label{Thm-statere}
Let $c_s>0$ and let the functions $k$ and $m$ be solutions of
{\rm\dref{k-pde}} and {\rm\dref{m-equ}}. Suppose that \vspace{-2mm}
$$ C_e\Fscr^{-1}[\cosh(\sqrt{-i(\l+c_s)}\cdot)] \m\neq\m 0 \FORALL \l
   \in\sigma(S) \m.$$

Then the state feedback law {\rm\dref{state-con}} with $m_w^{\top}
=m'(1)^{\top}$ solves the output regulation problem for the system
{\rm\dref{Sch-state-closed}}, i.e., $e_y\in L_{\alpha}[0,\infty)$ for
some $\alpha<0$. If $C_e$ is bounded, then there exist $M,\mu>0$ such
that $|e_y(t)|\leq Me^{-\mu t}$ holds for all $t\geq 0$.
\end{theorem}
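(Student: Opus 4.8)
The plan is to reduce the closed-loop system to the decoupled form derived in Section~4 and then invoke the admissibility and regularity results already established. First I would observe that, under the stated hypothesis, the lemma preceding the theorem guarantees that the regulator equations \dref{m-equ} indeed have a (unique) solution $m\in C^2([0,1];\rline^{n_w})$, so the error transformation \dref{tu-u-trans} is legitimate and, with $m_w=m'(1)$, the identities \dref{ORSch-tu} and \dref{sol-me} hold for all classical solutions of the closed-loop system \dref{Sch-state-closed}. Next I would record that classical solutions of \dref{Sch-state-closed} exist for a dense set of initial data (by Corollary~\ref{ProAB} and Proposition~\ref{Simy}, the closed-loop system is a regular linear system with generator $A_{cl}$), and that by continuous dependence (well-posedness of $\Sigma_{cl}$) the estimates we obtain on classical solutions extend to all initial states $z_0\in\hline$, $w_0\in\rline^{n_w}$.

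The heart of the argument is the decomposition \dref{sol-m4}: $e_y(t) = C_e\Fscr^{-1}[\widetilde v(\cdot,t)]$ once $m$ solves \dref{m-equ}, where $\dot{\widetilde v}=(\aline-c_sI)\widetilde v$ and $\widetilde v(0)=\Fscr[z_0]-m^\top w_0$. I would split this: write $\widetilde v(\cdot,t) = e^{(\aline-c_sI)t}\widetilde v(0)$. By Proposition~\ref{utolso}, $C_e\Fscr^{-1}$ is an admissible observation operator for the group generated by $\aline$; since $\aline$ is skew-adjoint its group is unitary (growth bound $0$), so the group generated by $\aline-c_sI$ has growth bound $-c_s<0$. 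Applying \dref{Eilat} from Section~2 with any $\alpha\in(-c_s,0)$ then yields $y_{\widetilde v}(t):=C_e\Fscr^{-1}e^{(\aline-c_sI)t}\widetilde v(0)\in L^2_\alpha([0,\infty))$, i.e.\ $e_y\in L_\alpha[0,\infty)$ with $\alpha<0$. This proves the first assertion. For requirement (i) of the Introduction (boundedness of all internal signals), I would note that the exosystem state $w$ is bounded because $S$ has all eigenvalues on the imaginary axis and is diagonalizable, and $z(\cdot,t)$ stays bounded because $\widetilde v$ decays exponentially while $m^\top w$ is bounded, so $v=\widetilde v+m^\top w$ and hence $z=\Fscr^{-1}[v]$ are bounded in $\hline$; the input $u(t)=F_\L z(t)+m_w^\top w(t)$ is then also under control through the admissibility of $F$ (Remark~\ref{what_is_F}).

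For the bounded case, if $C_e$ has a continuous extension to $\hline$ then $C_e\Fscr^{-1}\in\Lscr(\hline,\cline)$, so directly $|e_y(t)| = |C_e\Fscr^{-1}e^{(\aline-c_sI)t}\widetilde v(0)| \le \|C_e\Fscr^{-1}\|\,\|e^{(\aline-c_sI)t}\|\,\|\widetilde v(0)\| \le \|C_e\Fscr^{-1}\|\,e^{-c_st}\,\|\Fscr^{-1}\|\,(\|z_0\|+\|m\|_\infty|w_0|)$, giving the exponential bound $|e_y(t)|\le Me^{-\mu t}$ with $\mu=c_s$ and $M$ absorbing the constants.

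The main obstacle I anticipate is the passage from classical solutions to general initial data in the unbounded-$C_e$ case: strictly speaking $e_y(t)=C_e\Fscr^{-1}[\widetilde v(\cdot,t)]$ as written only makes literal sense when $\widetilde v(\cdot,t)\in H^2(0,1)$, and for $z_0\in\hline$ one must interpret $e_y$ via the $\L$-extension $C_{e\L}\Fscr^{-1}$ and the fact (from Proposition~\ref{keso_van}) that the relevant system node is regular, so that the output map is the well-posed extension of its action on classical solutions. One has to check that $C_e\Fscr^{-1}$ being admissible for the $\aline$-group (Proposition~\ref{utolso}) is the statement that makes $t\mapsto C_{e\L}\Fscr^{-1}e^{(\aline-c_sI)t}\widetilde v(0)$ a well-defined $L^2_{loc}$ function for every $\widetilde v(0)\in\hline$, and that \dref{sol-me} persists in this extended sense by density and continuity of the input–output maps. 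Once that identification is in place, \dref{Eilat} applies verbatim and the rest is the elementary estimates above.
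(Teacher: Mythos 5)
Your proposal is correct and follows essentially the same route as the paper's proof: reduce to $\dot{\widetilde v}=(\aline-c_sI)\widetilde v$ with $e_y=C_{e\L}\Fscr^{-1}[\widetilde v]$, use the exponential stability of the group generated by $\aline-c_sI$ together with the admissibility of $C_e\Fscr^{-1}$ from Proposition \ref{utolso} and the estimate \dref{Eilat} to get $e_y\in L^2_\alpha[0,\infty)$ for $\alpha\in(-c_s,0)$, and then the direct norm bound when $C_e$ is bounded. Your additional remarks on extending from classical solutions to general initial data via the $\L$-extension and on the boundedness of internal signals are consistent with (and slightly more explicit than) what the paper does.
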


\begin{proof}
We have seen after \dref{ORSch-tu} that \m $\dot{\widetilde{v}}
(\cdot,t)=(\aline-c_s I)\widetilde{v}(\cdot,t)$, where $\aline$ is
skew-adjoint. Clearly $\aline-c_s I$ generates an exponential stable
operator group, which, jointly with the admissibility of the
observation operator $C_e\Fscr^{-1}$ (see Proposition \ref{utolso})
implies that $e_y=C_{e\L}\Fscr^{-1}[\widetilde{v}]\in L^2_{\alpha}[0,
\infty)$ with $\alpha\in(-c_s,0)$, see \cite[Proposition 4.3.6]
{obs_book}. If the observation operator $C_e$ is bounded, then by the
boundedness of the transformation $\Fscr^{-1}$, there exist three
constants $C_0,M,\mu>0$ such that\\ \m\ \ \ \ \ \ $|e_y(t)|=|C_e\Fscr
^{-1}[\widetilde{v}](\cdot,t)|\leq C_0\|\widetilde{v}(\cdot,t)\|\leq
C_0 Me^{-\mu t}\|\widetilde{v}(\cdot,0)\|$. \end{proof}

\section{Observer design} 

\ \ \ The full states $w(t)$ and $z(\cdot,t)$ used in \dref{state-con}
are not always available (as measurements) to the controller. Thus, to
implement the feedback law \dref{state-con}, we need to design an
observer for the combined system \dref{Sch} and \dref{dis-produce}, to
recover its state from the output measurement $y_m(t)=z(1,t)$ and from
the reference $r(t)$. Since $(q_r^{\top},S_r)$ is observable, there
exists an observer gain $l_r\in\rline^{n_r}$ such that
$S_r+l_rq_r^{\top}$ is Hurwitz. So, we can use the finite
dimensional reference observer
\begin{equation} \label{wr-obser}
   \dot{\widehat{w}}_r(t) \m=\m S_r\widehat{w}_r(t)+l_r(q_r^{\top}
   \widehat{w}_r(t)-r(t)),
\end{equation}
where $\widehat{w}_r(t)$ is the estimate of $w_r(t)$ in
\dref{dis-produce}. In order to estimate $z(\cdot,t)$ and $w_d$ in
\dref{Sch} and \dref{dis-produce}, we design the following observer:
\begin{equation} \label{Sch-obser-r}
   \left\{\begin{array}{l} \dot{\widehat{w}}_d(t) \m=\m S_d\widehat
   {w}_d(t) + l_d(\widehat{z}(1,t)-y_m(t)), \crr\disp \widehat{z}_t
   (x,t) \m=\m -i\widehat{z}_{xx}(x,t)+h(x)\widehat{z}(x,t)+g(x)
   q^{\top}_{d_1}\widehat{w}_d(t) +l(x)\left[\widehat{z}(1,t)-y_m
   (t)\right],\crr\disp \widehat{z}_x(0,t) \m=\m -iq\widehat{z}(0,t)
   +q^\top_{d_2}\widehat{w}_d(t), \crr\disp \widehat{z}_x(1,t) \m=\m
   u(t)+l_0(\widehat{z}(1,t)-y_m(t)),\end{array}\right.
\end{equation}
where $l(\cdot)$, $l_0$ are observer gains, to be designed later. It
should be noted that the above observer \dref{Sch-obser-r} is
implemented based on the boundary measurement $y_m(t)$ and the input
signal $u(t)$. Let
\begin{equation*} 
   \widetilde{w}_r(t) \m=\m \widehat{w}_r(t)-w_r(t),\quad
   \widetilde{w}_d(t) \m=\m \widehat{w}_d(t)-w_d(t),\quad
   \widetilde{z}(x,t) \m=\m \widehat{z}(x,t)-z(x,t)
\end{equation*}
be the observer errors. Then, by \dref{Sch}, \dref{wr-obser} and
\dref{Sch-obser-r}, $\widetilde{w}_d(t)$, $\widetilde{w}_r(t)$ and
$\widetilde{z}(x,t)$ satisfy
\begin{equation} \label{Sch-obser-err-z}
   \left\{\begin{array}{l} \dot{\widetilde{w}}_r(t) \m=\m (S_r+l_r
   q_r^{\top})\widetilde{w}_r(t),\qquad \dot{\widetilde{w}}_d(t)
   \m=\m S_d\widetilde{w}_d(t)+l_d\widetilde{z}(1,t),\crr\disp
   \widetilde{z}_t(x,t) \m=\m -i\widetilde{z}_{xx}(x,t)+h(x)
   \widetilde{z}(x,t)+g(x)q^{\top}_{d_1}\widetilde{w}_d(t)+l(x)
   \widetilde{z}(1,t),\crr\disp \widetilde{z}_x(0,t) \m=\m -iq
   \widetilde{z}(0,t)+q^{\top}_{d_2}\widetilde{w}_d(t),\qquad
   \widetilde{z}_x(1,t) \m=\m l_0\widetilde{z}(1,t),
   \end{array}\right.
\end{equation}
which has to be exponentially stabilized. In order to find the
observer gains $l(\cdot)$, $l_0$ that ensure that
\dref{Sch-obser-err-z} is exponentially stable, we look for the
{\em backstepping transformation} \vspace{-2mm}
\begin{equation} \label{trans-tz-u}
   \widetilde{z}(x,t) \m=\m \Fscr_o[e](x,t) :=\m e(x,t)-\int_x^1
   p(x,\xi)e(\xi,t)\dd\xi,
\end{equation}
that transforms \dref{Sch-obser-err-z} into the following system:
\begin{equation} \label{Sch-obser-err-r}
   \left\{\begin{array}{l} \dot{\widetilde{w}}_r(t) \m=\m (S_r+l_r q_r
   ^{\top})\widetilde{w}_r(t),\qquad \dot{\widetilde{w}}_d \m=\m S_d
   \m\widetilde{w}_d(t)+l_de(1,t), \crr\disp e_t(x,t) \m=\m -ie_{xx}
   (x,t)-c_oe(x,t)+\widetilde{g}(x)^{\top}\widetilde{w}_d(t) +
   \widetilde{l}(x)e(1,t),\;x\in (0,1), \crr\disp e_x(0,t) \m=\m
   q^{\top}_{d_2}\widetilde{w}_d(t), \qquad e_x(1,t)=0, \crr\disp
   e(x,0) \m=\m e_0(x) \m=\m \Fscr_o^{-1}[\widetilde{z}_0](x),
   \end{array}\right.
\end{equation}
where $\widetilde{g}(x)^{\top}$ is given by
$\widetilde{g}(x)^{\top}=\mathcal{F}_o^{-1}[g](x)q^{\top}_{d_1}$ and
$\widetilde{l}(x)$ is needed as an additional degree of freedom for
the subsequent design.

By the third equations of \dref{Sch-obser-err-z} and
\dref{Sch-obser-err-r}, and the transformation \dref{trans-tz-u},
through integration by parts we obtain
\begin{eqnarray} \label{p-eq1}
  &&g(x)q^{\top}_{d_1}\widetilde{w}_d(t)+l(x)e(1,t) \m=\m g(x)q^\top
   _{d_1}\widetilde{w}_d(t)+l(x)\widetilde{z}(1,t)\crr\disp
    &&=\widetilde{z}_t(x,t)+i\widetilde{z}_{xx}(x,t)-h(x)\widetilde{z}
   (x,t)\crr \disp
  &&= e_t(x,t)-\int_x^1p(x,\xi)e_t(xi,t)\dd\xi+i\bigg[e(x,t)-\int_x^1
   p(x,\xi)e(\xi,t)\dd\xi\bigg]_{xx}\crr\disp
  &&\hspace{0.4cm}-h(x)\bigg[e(x,t)-\int_x^1 p(x,\xi)e(\xi,t)\dd\xi
   \bigg]\crr\disp
  &&=-ie_{xx}(x,t)-c_oe(x,t)+\widetilde{g}(x)^{\top}\widetilde{w}_d
   (t)+\widetilde{l}(x)e(1,t)\crr\disp
  &&\hspace{0.4cm}-h(x)\bigg[e(x,t)-\int_x^1p(x,\xi)e(\xi,t)\dd\xi
   \bigg]+\int_x^1 p(x,\xi)c_o e(\xi,t)\dd\xi\crr\hspace{0.4cm}\disp
   &&\hspace{0.4cm}+i\int_x^1 p_{\xi\xi}
   (x,\xi)e(\xi,t)\dd\xi  -i\int_x^1 p_{xx}(x,\xi)e(\xi,t)\dd\xi\crr
   \hspace{0.4cm}
  &&\hspace{0.4cm}-\int_x^1 p(x,\xi)\widetilde{g}(\xi)^{\top}\dd\xi
   \widetilde{w}_d(t)-\int_x^1 p(x,\xi)\widetilde{l}(x)\dd\xi e(1,t)
  \crr\hspace{0.4cm} \disp
  &&\hspace{0.4cm}+i\bigg[e_{xx}(x,t)+\frac{d}{dx}p(x,t)e(x,t)+p(x,x)
   e_x(x,t)+p_x(x,x)e(x,t)\bigg]\crr\disp
  &&\hspace{0.4cm}+i[p(x,1)e_x(1,t)-p(x,x)e_x(x,t)-p_\xi(x,1)e(1,t)+
   p_\xi(x,x)e(x,t)]\crr\disp
  &&=\bigg[2i\frac{d}{dx}p(x,t)e(x,t)-h(x)-c_o\bigg]e(x,t)\crr\disp
  &&\hspace{0.4cm}+[\Fscr_o[\widetilde{l}(x)]-ip_\xi(x,1)]e(1,t) + 
   \Fscr_o[\widetilde{g}(x)^\top]
   \widetilde{w}_d(t)\crr\hspace{0.4cm}\disp
  &&\hspace{0.4cm}+\int_x^1[-ip_{xx}(x,\xi)+ip_{\xi\xi}(x,\xi)+h(x)
   p(x,\xi)+c_o p(x,\xi)]e(\xi,t)\dd\xi \m.
\end{eqnarray}
By the fourth equations of \dref{Sch-obser-err-z} and
\dref{Sch-obser-err-r}, and the transformation \dref{trans-tz-u}, we
obtain
$$ \begin{array}{l}\disp 0 \m=\m e_x(0,t)-q^{\top}_{d_2}\widetilde
   {w}_d(t) \m=\m \widetilde{z}_x(0,t)-p(0,0) e(0,t) + \int_0^1 p_\xi
   (0,\xi)e(\xi,t)\dd\xi\\ \m\hspace{60mm} -q^\top_{d_2}
   \widetilde{w}_d(t)\crr\hspace{0.3cm}
   \disp =\m -iq[e(0,t)-\int_x^1 p(0,\xi)e(\xi,t)\dd\xi] -
   p(0,0)e(0,t)+\int_0^1 p_\xi(0,\xi)e(\xi,t)\dd\xi\crr\hspace{0.3cm}
   \disp =\m -[p(0,0)+qi]e(0,t)+\int_0^1[p_\xi(0,\xi)+qi p(0,\xi)]
   e(\xi,t)\dd\xi.\end{array}$$
By the fifth equations of \dref{Sch-obser-err-z} and
\dref{Sch-obser-err-r} and the transformation \dref{trans-tz-u},
\begin{equation} \label{p-eq3}
   0 \m=\m e_x(1,t) \m=\m \widetilde{z}_x(1,t)-p(1,1)e(1,t) \m=\m
   l_0\widetilde{z}(1,t)-p(1,1)e(1,t) \vspace{-1mm}
\end{equation}
$$ \m\hspace{63mm} =\m [l_0-p(1,1)]e(1,t) \m.$$
It follows from \dref{p-eq1}-\dref{p-eq3} that the kernel function
$p(x,\xi)$ in \dref{trans-tz-u} should satisfy
$$ \left\{ \begin{array}{l} p_{\xi\xi}(x,\xi)-p_{xx}(x,\xi) \m=\m
   (h(x)+c_o)ip(x,\xi),\; c_o>0,\crr\disp p_x(0,\xi)+qi p(0,\xi)
   \m=\m 0,\crr\disp p(x,x) \m=\m -\frac{i}{2}\int_0^x (h(\xi)+c_o)
   \dd\xi-qi, \end{array}\right.$$
and that we should choose the observer gains $l(\cdot)$ and $l_0$ in
\dref{Sch-obser-r} so that \vspace{-1mm}
$$ l(x) \m=\m \Fscr_o[\widetilde{l}](x)-ip_\xi(x,1),\qquad
   l_0 \m=\m p(1,1) \m.$$
By \cite[Theorem 2.2]{AS-Krstic-book}, the above equations in $p$ have
a unique solution $p\in C^2(\Om)$. We note that we have still not
obtained the final expression of the observer gain $l(\cdot)$, because
$\widetilde{l}(\cdot)$ is a new design function. In order to find
$\widetilde{l}(\cdot)$ in \dref{Sch-obser-err-r} so that the
``$e$-part'' of the system \dref{Sch-obser-err-r} is exponentially
stable in $L^2[0,1]$, we further introduce the {\em error
transformation} \vspace{-1mm}
\begin{equation} \label{tu-u-trans-nn}
   \widetilde{e}(x,t) \m=\m e(x,t)-n(x)^{\top}\widetilde{w}_d(t).
\end{equation}
It is expected that under the above transformation, the system
\dref{Sch-obser-err-r} can be transformed into
\begin{equation} \label{te-equ}
   \left\{\begin{array}{l} \dot{\widetilde{w}}_r(t) \m=\m (S_r+l_r
   q_r^{\top})\widetilde{w}_r(t),\crr\disp \dot{\widetilde{w}}_d \m=\m
   (S_d+l_dn(1)^{\top})\widetilde{w}_d(t)+l_d\widetilde{e}(1,t),
   \crr\disp \widetilde{e}_t(x,t) \m=\m -i\widetilde{e}_{xx}(x,t)-c_o
   \widetilde{e}(x,t), \crr\disp \widetilde{e}_x(0,t)=0, \qquad
   \widetilde{e}_x(1,t) \m=\m 0.\end{array}\right.
\end{equation}
Substituting \dref{tu-u-trans-nn} into the third equation of
\dref{te-equ}, we derive
\begin{equation} \label{sol-n1}
   \begin{array}{l} 0 \m=\m \widetilde{e}_t(x,t)+i\widetilde{e}_{xx}
   (x,t)+c_o\widetilde{e}(x,t)\crr\disp =\m e_t(x,t)-n(x)^{\top}S_r
   \widetilde{w}_d(t)-n(x)^{\top}l_de(1,t)\crr\disp
   \ \ \   +i[e_{xx}(x,t)-n''(x)^{\top}\widetilde{w}_d(t)]
   +c_o e(x,t)-c_on(x)^{\top}\widetilde{w}_d(t)\crr\disp
   =\m [\widetilde{l}(x)-n(x)^{\top}l_d]e(1,t)+[\widetilde{g}(x)^T -
   n(x)^{\top}S_d-in''(x)^{\top}-c_on(x)^{\top}]\widetilde{w}_d(t)
   \m=\m 0.\end{array}
\end{equation}
Substituting \dref{tu-u-trans-nn} into the fourth equation of
\dref{te-equ}, we have
\begin{equation} \label{sol-n2}
   0 \m=\m \widetilde{e}_x(0,t)=e_x(0,t)-n'(0)\widetilde{w}_d(t)
   \m=\m [q^{\top}_{d_2}-n'(0)^{\top}]\widetilde{w}_d(t) \m=\m 0.
\end{equation}
Substituting \dref{tu-u-trans-nn} into the fifth equation of
\dref{te-equ}, we obtain
\begin{equation} \label{sol-n3}
   0 \m=\m \widetilde{e}_x(1,t)=e_x(0,t)-n'(1)^{\top}\widetilde{w}
   _d(t) \m=\m -n'(1)^{\top}\widetilde{w}_d(t) \m=\m 0.
\end{equation}
It follows from \dref{sol-n1}-\dref{sol-n3} that $n(\cdot)$ must
satisfy the following equations:
\begin{equation} \label{n-equ}
   \left\{\begin{array}{l}\disp in''(x)^{\top}+n(x)^{\top}S_d+c_on(x)
   ^\top =\m \widetilde{g}(x)^{\top},\crr\disp n'(0)^{\top} =\m
   q^\top_{d_2},\qquad n'(1)^{\top} =\m 0. \end{array}\right.
\end{equation}
If we choose $\widetilde{l}$ so that $\widetilde{l}(x)=n(x)^\top l_d$,
provided that the equation \dref{n-equ} is solvable, then the system
\dref{Sch-obser-err-r} becomes \dref{te-equ}. Thus, the observer gains
$l(\cdot)$ and $l_0$ in \dref{Sch-obser-r} are designed as follows:
\begin{equation} \label{lxl0-expr-fin}
   l(x) \m=\m \Fscr_o(n(x)^\top l_d)-ip_\xi(x,1),\quad l_0 \m=\m
   p(1,1) \m,
\end{equation}
provided that the equation \dref{n-equ}  has a solution.

\begin{lemma}
The equations {\rm\dref{n-equ}} have a unique solution if and only if
$\sigma_o\cap\sigma(S_d)=\emptyset$, where $\sigma_o=\{-j^2\pi^2i-c_o
\}$ is the eigenvalue set of the ``$\widetilde{e}$-part'' of
{\rm\dref{te-equ}}.
\end{lemma}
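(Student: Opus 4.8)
The plan is to reduce \dref{n-equ} to a family of decoupled scalar two-point boundary value problems by diagonalizing $S_d$, in complete analogy with the solvability proof for the regulator equation \dref{m-equ}. Since $S_d$ is diagonalizable with distinct eigenvalues $\lambda_1,\ldots,\lambda_{n_d}$ on the imaginary axis, I would choose eigenvectors $v_1,\ldots,v_{n_d}$ with $S_dv_j=\lambda_jv_j$ and $V_d=[v_1,\ldots,v_{n_d}]$ invertible, multiply \dref{n-equ} on the right by $v_j$, and set $\bar n_j(x)=n(x)^\top v_j$. This turns \dref{n-equ} into the $n_d$ scalar problems (one for each $j=1,\ldots,n_d$)
$$ i\,\bar n_j''(x)+(\lambda_j+c_o)\,\bar n_j(x) \m=\m \widetilde g(x)
   ^\top v_j \m,\qquad \bar n_j'(0) \m=\m q_{d_2}^\top v_j \m,\qquad
   \bar n_j'(1) \m=\m 0 \m. $$
Because $V_d$ is invertible, $n(\cdot)$ is recovered from the $\bar n_j$ via $n(x)^\top=[\bar n_1(x),\ldots,\bar n_{n_d}(x)]V_d^{-1}$, and the homogeneous versions of these equations decouple in the same way; hence \dref{n-equ} has a unique solution in $C^2([0,1];\cline^{n_d})$ if and only if each of these scalar problems has a unique solution in $C^2[0,1]$.

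Next I would analyse a fixed scalar problem. Its general solution is a particular solution $\bar n_{j,p}$ of the inhomogeneous ODE (which can be written down explicitly by variation of parameters from the forcing) plus an arbitrary element $c_1\phi_1+c_2\phi_2$ of the two-dimensional solution space of the homogeneous equation $i\phi''+(\lambda_j+c_o)\phi=0$. Imposing the two Neumann conditions produces a $2\times 2$ linear system for $(c_1,c_2)$ whose coefficient matrix is $\sbm{\phi_1'(0)&\phi_2'(0)\\ \phi_1'(1)&\phi_2'(1)}$; by the Fredholm alternative in this finite-dimensional situation the scalar problem has a (unique) solution for the given data if and only if this matrix is invertible, equivalently, if and only if the homogeneous Neumann problem
$$ i\phi''(x)+(\lambda_j+c_o)\phi(x) \m=\m 0 \m,\qquad \phi'(0) \m=\m
   \phi'(1) \m=\m 0 $$
has only the trivial solution. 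I would then note that this homogeneous problem is exactly the eigenvalue equation $(\aline-c_o I)\phi=\lambda_j\phi$ for the generator of the ``$\widetilde e$-part'' of \dref{te-equ}, with $\aline$ as in \dref{Alina}, so it has a nontrivial solution precisely when $\lambda_j\in\sigma(\aline-c_o I)=\sigma_o$. (One can also see this by an elementary computation: writing the homogeneous solutions through $\cosh$ and $\sinh$, the condition $\phi'(0)=0$ eliminates one mode and $\phi'(1)=0$ then forces a transcendental relation that holds exactly for $\lambda_j\in\sigma_o$; the degenerate case $\lambda_j+c_o=0$, in which the constant functions are nontrivial homogeneous solutions and $\lambda_j=-c_o\in\sigma_o$, has to be handled on its own.)

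Combining the two steps, the $j$-th scalar problem is uniquely solvable if and only if $\lambda_j\notin\sigma_o$, and therefore \dref{n-equ} has a unique solution if and only if none of the eigenvalues of $S_d$ lies in $\sigma_o$, i.e. $\sigma_o\cap\sigma(S_d)=\emptyset$. I do not expect a genuine obstacle, since the whole argument runs parallel to the solvability proof for \dref{m-equ}; the only points that need a little care are the degenerate case $\lambda_j+c_o=0$ (the constant eigenfunction of $\aline-c_o I$) and the elementary but necessary observation that, thanks to the invertibility of $V_d$, unique solvability of the coupled system \dref{n-equ} is equivalent to unique solvability of each of the decoupled scalar problems.
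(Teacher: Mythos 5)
Your proposal is correct and follows essentially the same route as the paper: diagonalize $S_d$, decouple \dref{n-equ} into $n_d$ scalar Neumann boundary value problems for $\bar n_j=n(\cdot)^\top v_j$, and observe that each is uniquely solvable exactly when the homogeneous Neumann problem has only the trivial solution, i.e.\ when $\l_j\notin\sigma_o$, with the degenerate case $\l_j+c_o=0$ treated separately. The paper carries out the last step by the explicit $\cosh$/$\sinh$ computation (reducing to $\sinh\sqrt{-i(\l_j+c_o)}\neq 0$) rather than by invoking the Fredholm alternative abstractly, but this is only a difference in presentation, not in substance.
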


\begin{proof}
Since $S$ is diagonalizable, there exists a matrix $V=[v_1,v_2,\ldots
v_{n_d}]$, $v_j\in\rline^{n_w}$, $j=1,2,\ldots n_d$, such that
$V^{-1}SV={\rm diag}(\l_1,\l_2,\dots\l_{n_d})$, where $\l_j$, $j=1,2,
\ldots n_d$ are the eigenvalues of $S_d$. Multiply by $v_j$ from the
right in \dref{m-equ} to obtain
\begin{equation} \label{n-equ-vi}
   \left\{\begin{array}{l}\disp i\bar{n}''_j(x)+\l_j\bar{n}_j(x)+c_o
   \bar{n}_j(x) \m=\m \widetilde{g}(x)^{\top}v_j,\crr\disp \bar{n}'_j
   (0) \m=\m q^{\top}_{d_2}v_j,\qquad \bar{n}'_j(1) \m=\m 0,\ j=1,2,
   \ldots n_d,\end{array}\right.
\end{equation}
where $\bar{n}_j=n(x)^{\top}v_j$, $j=1,2,\ldots,n_d$. If $\l_j+c_o
\neq 0$, the solutions of the first equation in \dref{n-equ-vi} are of
the form \vspace{-2mm}
\begin{equation} \label{ni-gens}
   \begin{array}{l}\disp \bar{n}_j(x) \m=\m \gamma_1\cosh\left(\sqrt
   {-i(\l_j+c_o s)}x\right) + \gamma_2\frac{\sinh(\sqrt{-i(\l_j+c_o)}
   x)}{\sqrt{-i(\l_j+c_o)}}\\ \disp\hspace{1.3cm} +\int_0^x \big[-i
   \widetilde{g}(\xi)^{\top}v_j\big] \frac{\sinh(\sqrt{-i(\l_j+c_o)}
   (x-\xi))}{\sqrt{-i(\l_j+c_o)}}\dd\xi,\end{array}
\end{equation}
where $\gamma_1,\gamma_2$ are coefficients to be determined.
Substituting \dref{ni-gens} into the boundary conditions in
\dref{n-equ-vi} yields \vspace{-1mm}
$$ \left\{ \begin{array}{l} \gamma_2 \m=\m q^{\top}_{d_2}v_j,\crr
   \disp \gamma_1\sqrt{-i(\l_j+c_o)}\sinh\sqrt{-i(\l_j+c_o)}+
   \gamma_2\cosh\sqrt{-i(\l_j+c_o)}\crr\disp\hspace{1cm} \m=\m
   \int_0^1 i\widetilde{g}(\xi)^{\top}v_j\cosh(\sqrt{-i(\l_j+c_o)}
   (1-\xi))\dd\xi. \end{array}\right.$$
It is obvious that the coefficients $\gamma_1,\gamma_2$ are uniquely
determined if and only if $\sinh\sqrt{-i(\lambda_j+c_o)}\neq0$.  It is
easy to see that $\sinh\sqrt{-i(\lambda_j+c_o)}\neq0$ is equivalent to
$\sigma_o\cap\sigma(S_d)=\emptyset$.

If $\l_j+c_o=0$, the solutions of the first equation in
\dref{n-equ-vi} are of the form
\begin{equation} \label{ni-gens-0}
   \bar{n}_j(x) \m=\m \gamma_1+\gamma_2x+\int_0^x(x-\xi)\big[-i
   \widetilde{g}(\xi)^{\top}v_j\big]\dd\xi,
\end{equation}
where $\gamma_1$, $\gamma_2$ are the coefficients to be determined.
Substituting \dref{ni-gens-0} into the boundary conditions in
\dref{n-equ-vi}, we get $\gamma_2=q^{\top}_{d_2}v_j$ and
$\gamma_2=\int_0^1i\widetilde{g}(\xi)^{\top}v_j\dd\xi$.  It is obvious
that the $\gamma_1$ cannot be uniquely determined and that there is no
solution $\gamma_2$ if \m $q^\top_{d_2}v_j\neq\int_0^1 i\widetilde{g}
(\xi)^{\top}v_j\dd\xi$. Therefore, \dref{n-equ} admits a unique
solution if and only if $\sigma_o\cap\sigma(S_d)=\emptyset$.
\end{proof}

The next result confirms the existence, uniqueness and the
exponentially stability of the solutions of the observer error system
\dref{Sch-obser-err-z}. Rewrite the system \dref{te-equ} in the form
$$ \frac{\dd}{\dd t}(\widetilde{w}_r(t),\widetilde{w}_d(t),
   \widetilde{e}(\cdot,t))^\top \m=\m \AAA(\widetilde{w}_r(t),
   \widetilde{w}_d(t),\widetilde{e}(\cdot,t))^\top$$
where the operator $\AAA:D(\AAA)\to $ is defined as follows:
$$ \left\{ \begin{array}{l}\AAA(X_r,X_d,\phi(x)) \crr\disp\m=\m 
   ((S_r+l_r q_r^{\top})X_r,(S_d+l_dn(1)^\top)X_d+l_d\phi(1),-i
   \phi''(x)-c_o\phi(x)),\crr\disp D(\AAA) \m=\m \{(X_r,X_d,\phi(x))
   \in\rline^{n_r}\times\rline^{n_d}\times H^2(0,1)\m|\ \phi'(0)=0,\ 
   \phi'(1)=0\}. \end{array}\right.$$

\begin{theorem} \label{Thm-obser}
Let $\sigma_o\cap\sigma(S_d)=\emptyset$. Suppose that the observer
gains $l(x)$, $l_0$ are given by {\rm\dref{lxl0-expr-fin}} and the
gain $l_r\in\rline^{n_r}$ is chosen so that $S_r+l_rq_r^{\top}$ is
Hurwitz. Suppose that $S_d+l_dn(1)^{\top}$ is also Hurwitz. Moreover,
assume that $S_r+l_rq_r^{\top}$ has simple and stable eigenvalues
$\l_{rj}$ with the corresponding eigenvectors $X_{rj}\in\rline^{n_r}$,
$j=1,2,\ldots n_r$, and $S_d+l_dn(1)^{\top}$ has simple and stable
eigenvalues $\l_{dj}$ with the corresponding eigenvectors $X_{dj}\in
\rline^{n_d}$, $j=1,2,\ldots n_d$, and $\l_{rj_1}\neq\l_{dj_2}$ for
$1\leq j_1\leq n_r$, $1\leq j_2\leq n_d$. Let $c_s$, $c_o>0$. Then
{\rm\dref{wr-obser}} with {\rm\dref{Sch-obser-r}} is an observer for
the system {\rm\dref{Sch}}. Moreover, the observer error dynamics
{\rm\dref{Sch-obser-err-z}} is exponentially stable in the sense that
for some $M\geq 1,\mu>0$, \vspace{-1mm}
\begin{equation} \label{Thm-re-err-expon}
   \|(\widetilde{w}_d(t),\widetilde{w}_r(t),\widetilde{z}(\cdot,t))\|
   \m\leq\m Me^{-\mu t}\|(\widetilde{w}_d(0),\widetilde{w}_r(0),
   \widetilde{z}(\cdot,0))\|.
\end{equation}
\end{theorem}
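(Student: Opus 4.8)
The plan is to work with the transformed error system \dref{te-equ} rather than with \dref{Sch-obser-err-z} directly. The backstepping transformation $\Fscr_o$ in \dref{trans-tz-u} and its inverse are bounded by \cite[Theorem~2.2]{AS-Krstic-book}, and the error transformation \dref{tu-u-trans-nn} reads $\widetilde e = e - n(\cdot)^\top\widetilde w_d$ with $n\in C^2([0,1];\rline^{n_d})$ the (unique, by the lemma preceding the theorem) solution of \dref{n-equ}; since $\widetilde z = \Fscr_o[e]$ we get $\|\widetilde z(\cdot,t)\|\le \|\Fscr_o\|\bigl(\|\widetilde e(\cdot,t)\|+\|n\|_\infty\|\widetilde w_d(t)\|\bigr)$. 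Hence it suffices to show that $\|\widetilde w_r(t)\|$, $\|\widetilde w_d(t)\|$ and $\|\widetilde e(\cdot,t)\|$ decay exponentially with a constant independent of the initial data; bounded invertibility of both transformations then yields \dref{Thm-re-err-expon}. The key structural point is that \dref{te-equ} is \emph{lower block triangular}: the $\widetilde w_r$-equation and the $\widetilde e$-equation are autonomous, and the $\widetilde w_d$-equation is forced only by the scalar signal $\widetilde e(1,t)$. So the system is a cascade of three pieces, which I would solve and estimate in that order; this also produces the solution and thus settles existence and uniqueness for \dref{Sch-obser-err-z}.

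First I would handle the two autonomous pieces. The $\widetilde e$-equation in \dref{te-equ} is $\dot{\widetilde e}=(\aline-c_oI)\widetilde e$ with $\aline$ the skew-adjoint operator \dref{Alina}; since $\aline$ generates a unitary group, $\aline-c_oI$ generates $e^{-c_ot}e^{\aline t}$, whose norm equals $e^{-c_ot}$, so $\|\widetilde e(\cdot,t)\|\le e^{-c_ot}\|\widetilde e(\cdot,0)\|$. Next, the point evaluation $f\mapsto f(1)$ is an admissible observation operator for the group generated by $\aline$: the eigenvalues of $\aline$ are $-ij^2\pi^2$ ($j\ge 0$), with eigenfunctions proportional to $\cos(j\pi x)$, which form an orthogonal basis of $\hline$, the observation values $\cos(j\pi)=(-1)^j$ are bounded, and the imaginary parts $-j^2\pi^2$ have gaps tending to infinity, so the diagonal Carleson measure criterion \cite[Proposition~5.3.5]{obs_book} applies exactly as for $C_m$ in Lemma~\ref{lem-A}. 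Applying admissibility on the window $[t,t+1]$ to the state restarted at time $t$, and using the decay just obtained, gives $\int_t^{t+1}|\widetilde e(1,\sigma)|^2\dd\sigma\le m_1 e^{-2c_ot}\|\widetilde e(\cdot,0)\|^2$ for some $m_1>0$; in particular $\widetilde e(1,\cdot)\in L^2_\alpha[0,\infty)$ for every $\alpha>-c_o$. Also, since $S_r+l_rq_r^\top$ is Hurwitz, $\widetilde w_r(t)=e^{(S_r+l_rq_r^\top)t}\widetilde w_r(0)$ decays like $e^{-\mu_3t}$ for some $\mu_3>0$.

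It remains to estimate $\widetilde w_d$. By assumption $S_d+l_dn(1)^\top$ is Hurwitz, say $\|e^{(S_d+l_dn(1)^\top)t}\|\le M_1e^{-\mu_1t}$, and variation of constants applied to the second line of \dref{te-equ} gives
$$ \widetilde w_d(t) \m=\m e^{(S_d+l_dn(1)^\top)t}\widetilde w_d(0)
   + \int_0^t e^{(S_d+l_dn(1)^\top)(t-\sigma)}l_d\widetilde e(1,\sigma)\,\dd\sigma \m. $$
The first term decays like $e^{-\mu_1t}$. For the convolution, splitting $[0,t]$ into unit subintervals and using the windowed estimate for $\widetilde e(1,\cdot)$ together with Cauchy--Schwarz bounds it by $C e^{-\mu_2t}\|\widetilde e(\cdot,0)\|$ for any $\mu_2<\min\{\mu_1,c_o\}$ (only a polynomial factor is lost in the borderline case $\mu_1=c_o$, and it is absorbed by shrinking $\mu_2$). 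Hence $\|\widetilde w_d(t)\|\le M_2e^{-\mu_2t}\bigl(\|\widetilde w_d(0)\|+\|\widetilde e(\cdot,0)\|\bigr)$. Combining the three estimates and transforming back through \dref{tu-u-trans-nn} and $\Fscr_o$ as above gives \dref{Thm-re-err-expon} with $\mu=\min\{\mu_2,\mu_3\}$; in particular the error tends to $0$, so \dref{wr-obser} with \dref{Sch-obser-r} is indeed an observer for \dref{Sch}.

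The argument uses only that $S_r+l_rq_r^\top$ and $S_d+l_dn(1)^\top$ are Hurwitz and that \dref{n-equ} is solvable; the extra hypotheses of the theorem (simplicity of the $\l_{rj},\l_{dj}$ and the separation $\l_{rj_1}\neq\l_{dj_2}$) are what one needs to realize $\AAA$ as a Riesz-spectral operator, with eigenvalues $\{\l_{rj}\}\cup\{\l_{dj}\}\cup\{-ij^2\pi^2-c_o\}$ and eigenvectors quadratically close, in the $\hline$-component, to $\{\cos(j\pi x)\}$, and I would record this spectral description as well. The only genuinely technical step is the middle one: proving admissibility of the point observation for the $\aline$-group and turning it, via the exponential decay of $\widetilde e(\cdot,t)$, into an exponentially decaying local $L^2$ bound on $\widetilde e(1,\cdot)$ that feeds cleanly into the convolution estimate for $\widetilde w_d$; the rest is bookkeeping.
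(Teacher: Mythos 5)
Your argument is correct, but it takes a genuinely different route from the paper. The paper treats \dref{te-equ} as a single evolution equation $\frac{\dd}{\dd t}(\widetilde w_r,\widetilde w_d,\widetilde e)^\top=\AAA(\widetilde w_r,\widetilde w_d,\widetilde e)^\top$ and proves exponential stability spectrally: it computes all eigenpairs of $\AAA$ (the finite-dimensional ones, plus $(ij^2\pi^2-c_o,\cos(j\pi\cdot))$ with attached $\widetilde w_d$-components), shows via Bari's theorem that these eigenvectors form a Riesz basis of $\rline^{n_w}\times\hline$, and concludes exponential decay from the spectrum-determined growth condition; this is precisely where the extra hypotheses (simplicity of the $\l_{rj},\l_{dj}$ and the separation $\l_{rj_1}\neq\l_{dj_2}$) enter, to guarantee a complete eigenvector system. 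Your cascade argument bypasses the spectral analysis entirely: you use only the block-triangular structure of \dref{te-equ}, the contraction estimate $\|\widetilde e(\cdot,t)\|\le e^{-c_ot}\|\widetilde e(\cdot,0)\|$, admissibility of the point evaluation at $x=1$ for the $\aline$-group (Carleson criterion, exactly as for $C_m$ in Lemma \ref{lem-A}), and a windowed Cauchy--Schwarz bound on the convolution driving $\widetilde w_d$. This yields \dref{Thm-re-err-expon} under only the Hurwitz hypotheses, which you correctly observe; it is also essentially the same machinery the paper itself deploys later, in the proof of Theorem \ref{Thm-outre}, where $\widetilde e(1,\cdot)\in L^2_{-\alpha_o}[0,\infty)$ is established by your admissibility argument and then fed into a variation-of-constants formula. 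What the paper's spectral route buys in exchange is the explicit Riesz-basis eigenstructure of $\AAA$ (which you say you would record separately) and, with it, semigroup generation and well-posedness of \dref{te-equ}; your route obtains well-posedness instead from the cascade structure (autonomous $\widetilde e$- and $\widetilde w_r$-equations, then an ODE for $\widetilde w_d$ with $L^2_{loc}$ forcing), which is equally legitimate. The only slip is the sign of the eigenvalues of $\aline$, which are $+ij^2\pi^2$ rather than $-ij^2\pi^2$; this is immaterial to the Carleson criterion and to the rest of your argument.
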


\begin{proof}
We compute the eigenvalues and the corresponding eigenfunctions of
$\AAA$. We solve $\AAA(X_r,X_d,\phi(x))=\l(X_r,X_d,\phi(x))$, where
$\l\in\sigma(\AAA)$ and $(X_r,X_d,\phi(x))\in D(\AAA)$, to obtain
\vspace{-1mm}
\begin{equation} \label{eig-equ}
   \left\{ \begin{array}{l}(S_r+l_rq_r^{\top})X_r \m=\m \l X_r,\qquad
   (S_d+l_dn(1)^{\top})X_d+l_d\phi(1) \m=\m \l X_d,\crr\disp
   -i\phi''(x)-c_o\phi(x) \m=\m \l\phi(x),\qquad \phi'(0) \m=\m
   0,\qquad \;\phi'(1) \m=\m 0.\end{array}\right.
\end{equation}
There are two cases: \\
{\it Case I:} $\phi\equiv 0$. In this case \dref{eig-equ} becomes
\vspace{-2mm}
$$ (S_r+l_rq_r^\top)X_r \m=\m \l X_r,\qquad (S_d+l_dn(1)^\top)
   X_d \m=\m \l X_d,$$
which has nontrivial solutions $(\l_{rj},[X_{rj},0_{n_d\times1}])$,
$j=1,2,\ldots n_r$ and $(\l_{dj},$ $[0_{n_r\times1},X_{dj}])$, $j=1,
2,\ldots n_d$. Hence, $(\l_{rj},F_{1j})=(\l_{rj},[X_{rj},0_{n_d
\times1},0])$, $j=1,2,\ldots n_r$, together with $(\l_{dj},F_{1(j+
n_r)})=(\l_{dj},[0_{n_r\times1},X_{dj},0])$, $j=1,2,\ldots
n_d$ are eigen-pairs of $\AAA$.

\noindent {\it Case II:} $\phi\neq 0$. Now \vspace{-1mm}
$$ \m\ \ \ \ \ -i\phi''(x)-c_o\phi(x) \m=\m \l\phi(x),\qquad \phi'(0)
   \m=\m 0,\qquad \phi'(1) \m=\m 0 \m,$$
which has nontrivial solutions $(\l_j,\phi_j(x))$: \vspace{-1mm}
$$ \l_j \m=\m j^2\pi^2i-c_o,\qquad \phi_j(x)=\cos(j\pi x),\ \ j=0,1,2,
   \ldots\ .\vspace{-1mm}$$
Substituting $(\l_j,\phi_j(x))$ into the first and the second equation
of \dref{eig-equ}, we get \vspace{-2mm}
$$ X_{r}^j \m=\m 0_{n_r\times1},\qquad X_{d}^j \m=\m -[(S_d+l_dn(1)
   ^\top) - (j^2\pi^2i-c_o)I_{n_d\times n_d}]^{-1}l_d\cos(j\pi)
   \m. \vspace{-1mm}$$
Thus we have found for $\AAA$ the eigen-pairs $(\l_j,F_{2j})$, for
$j=0,1,2,\ldots$\m, where \vspace{-2mm}
$$ F_{2j} \m=\m [0_{n_r\times 1},-[(S_d+l_dn(1)^\top)-(j^2\pi^2
   i-c_o)I_{n_d\times n_d}]^{-1}l_d\cos(j\pi),\m\cos(j\pi\cdot)].$$

Now we prove that the set $\{F_{1j_1}(x),F_{2j_2}(x)\m|\ j_1=1,2,
\ldots n_w,\ j_2=0,1,2, \ldots\}$ is a Riesz basis for $\rline^{n_w}
\times L^2[0,1]$. Indeed, let us denote by $G_j$ the first part of
$F_{1j}$, so that $G_j\in \rline^{n_w}$ and $F_{1j}=[G_j,0]$. Since
the set $\{G_j\m|\ j=1,2, \ldots n_w\}$ and the set $\{\cos(j\pi\cdot)
\m|\ j=0,1,2,\ldots\}$ form Riesz bases for $\rline^{n_w}$ and for
$L^2[0,1]$, respectively, \m $\{F_{1j}\m|\ j=1, 2,\ldots n_w\}\cup
\{F_{2j}^*=[0_{n_w\times 1}, \cos(j\pi\cdot)]\m|\ j=1,2,\ldots\}$ is a
Riesz basis in $\rline^{n_w}\times\hline$. Moreover, the set that we
want to prove to be a Riesz basis is quadratically close to the Riesz
basis that we have just found: \vspace{-2mm}
\begin{equation} \label{F-err}
\sum_{j=0}^{\infty} \|F_{2j}-F_{2j}^*
   \|^2_{\rline^{n_w}\times\hline} \hspace{40mm} \m \vspace{-2mm}
\end{equation}
$$ \begin{array}{l}\disp 
   \m=\m \sum_{j=0}^{\infty}
   \|[(S_d+l_dn(1)^{\top})-(j^2\pi^2i-c_o)I_{n_d\times n_d}]^{-1}l_d
   \|^2_{\rline^{n_d}}\crr\disp =\m \sum_{j=0}^{\infty}\frac{1}
   {|j^2\pi^2i-c_o|^2}\|[(S_d+l_dn(1)^{\top})/(j^2\pi^2i-c_o)-I_{n_d
   \times n_d}]^{-1}l_d\|^2_{\rline^{n_d}}. \end{array}$$

Since \vspace{-2mm}
$$ \lim_{j\to\infty}\| [(S_d+l_d n(1)^{\top})/(j^2\pi^2i-c_o)-
   I_{n_d\times n_d}]^{-1}l_d \|^2_{\rline^{n_d}} \m=\m \|l_d\|
   ^2_{\rline^{n_d}},$$
it follows from \dref{F-err} that \m $\sum_{j=0}^\infty\|F_{2j}-F_{2j}
^*\|^2_{\rline^{n_w}\times\hline}<\infty$. By the classical theorem of
Bari, $\{F_{1j}\}_{j=1}^{n_w}\cup \{F_{2j}\}_{j=0}^{+\infty}$ forms a
Riesz basis for $\rline^{n_w} \times\hline$. This shows that $\AAA$
generates an operator semigroup on $\rline^n\times\hline$, for which
the spectrum determined growth assumption holds. As a consequence, the
system \dref{te-equ} admits a unique solution. Since $\sup\{\Re\l\m|\
\l\in\sigma(\AAA)\}<0$, $e^{\AAA t}$ is an exponentially stable
operator semigroup, which, together with the boundedness of the
transformations \dref{trans-tz-u} and \dref{tu-u-trans-nn}, implies
\dref{Thm-re-err-expon}.
\end{proof}

\section{Output feedback regulation} 

\ \ \ By Theorem \ref{Thm-obser} we have obtained the estimated states
$\widehat{w}$ and $\widehat{z}(x,t)$ for $w$ and $z(x,t)$,
respectively. Since the state feedback control \dref{state-con}
achieves the output regulation, we naturally propose the following
output feedback control law: \vspace{-2mm}
\begin{equation} \label{state-con-out}
   u(t) \m=\m k(1,1) \widehat{z}(1,t) + \int_0^1 k_x(1,\xi)
   \widehat{z}(\xi,t)\dd\xi + m_w^\top \widehat{w}(t). \vspace{-1mm}
\end{equation}
Here we can see that the terms $k(1,1)\widehat{z}(1,t)+\int_0^1 k_x
(1,\xi)\widehat{z}(\xi,t)\dd\xi$ are to stabilize the system
\dref{Sch} and the term $m_w^\top\widehat{w}(t)$ is to track the
reference signal $r(t)=p_r^\top w(t)$. Now we turn to the closed-loop
system composed of \dref{Sch}, \dref{dis-produce}, \dref{wr-obser},
\dref{Sch-obser-r} and \dref{state-con-out}, that is \vspace{-2mm}
\begin{equation} \label{Sch-output-closed}
   \left\{\begin{array}{l} z_t(x,t) \m=\m -iz_{xx}(x,t)+h(x)z(x,t)+
   g(x)d_1(t),\crr\disp z_x(0,t) \m=\m  -iqz(0,t)+d_2(t),\crr\disp
   z_x(1,t) \m=\m k(1,1)\widehat{z}(1,t) + \int_0^1 k_x(1,\xi)
   \widehat{z}(\xi,t)\dd\xi+m_w^\top\widehat{w}(t),\end{array}\right.
\end{equation}
\begin{equation} \label{Sch_bis}
   \left\{\begin{array}{l} \dot{w}(t) \m=\m Sw(t),\crr\disp
   \dot{\widehat{w}}_r(t) \m=\m S_r \widehat{w}_r(t)+l_r(q_r^\top
   \widehat{w}_r(t)-r(t)),\crr\disp \dot{\widehat{w}}_d(t) \m=\m
   S_d\widehat{w}_d(t)+l_d(\widehat{z}(1,t)-y_m(t)),\crr\disp
   \widehat{z}_t(x,t) \m=\m -i\widehat{z}_{xx}(x,t)+h(x)\widehat{z}
   (x,t)+g(x)q^{\top}_{d_1}\widehat{w}_d(t)\crr\disp\ 
   \ \ \ \ \ \ \ \ \ \ \ \ \ \ +l(x)\left[\widehat{z}
   (1,t)-y_m(t)\right],\crr\disp \widehat{z}_x(0,t) \m=\m -iq
   \widehat{z}(0,t) + q^\top_{d_2}\widehat{w}_d(t),\crr\disp
   \widehat{z}_x(1,t) \m=\m k(1,1)\widehat{z}(1,t)+\int_0^1 k_x(1,
   \xi)\widehat{z}(\xi,t)\dd\xi + m_w^{\top}\widehat{w}(t)\crr\disp\ 
   \ \ \ \ \ \ \ \ \ \ \ \ \ \ +l_0\left[
   \widehat{z}(1,t)-y_m(t)\right],\end{array}\right.
\end{equation}
where the gains $l(\cdot)$, $l_0$ are given by \dref{lxl0-expr-fin}
and the gain $l_r$ is chosen so that $S_r+l_rq_r^\top$ is Hurwitz.
The following is the main result of this section.

\begin{theorem} \label{Thm-outre}
Suppose that the conditions in Theorems {\rm\ref{Thm-statere}} and
{\rm\ref{Thm-obser}} hold.

Then for any initial state $(z_0(x),w(0),\widehat{z}_0(x),\widehat{w}
(0))\in\hline\times\rline^{n_w}\times\hline\times\rline^{n_w}$, the
closed-loop system {\rm\dref{Sch-output-closed}-\dref{Sch_bis}} admits
a unique solution $(z(\cdot,t),w(t),\widehat{z}(\cdot,t),$ $\widehat{w}
(t))\in C([0,\infty);\hline\times\rline^{n_w}\times\hline\times\rline
^{n_w})$. Moreover, there exist $M\geq 1$, $\mu>0$ such that
$$ \begin{array}{l}\disp
\|(\widehat{w}_r(t)-w_r(t),\widehat{w}_d(t)-w_d(t),\widehat{z}
   (\cdot,t)-z(\cdot,t))\| \m\crr\disp
   \leq Me^{-\mu t}\|(\widehat{w}_r(0)-
   w_r(0),\widehat{w}_d(0)-w_d(0),\widehat{z}_0-z_0)\|.\end{array}$$

The observer based controller (with internal loop) {\rm
\dref{wr-obser}, \dref{Sch-obser-r}} and {\rm\dref{state-con-out}}
solves the output feedback regulator problem for the plant {\rm
\dref{Sch}} with the exosystem {\rm\dref{dis-produce}}. This means
that the output error $e_y(t)=y(t)-r(t)=C_{e\L}[z(\cdot,t)]-p_r^\top
w(t)$ for the closed-loop system {\rm\dref{Sch-output-closed}-%
\dref{Sch_bis}} satisfies $e_y\in L^2_\alpha[0,\infty)$ for some
$\alpha<0$. If $C_e$ is bounded, then there exist $m_0,\mu_0>0$ ($m_0$
depends on the initial state mentioned above) such that we have
$|e_y(t)|\leq m_0 e^{-\mu_0 t}$ for all $t\geq 0$.
\end{theorem}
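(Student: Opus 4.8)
The plan is to reduce the problem to the state-feedback analysis of Sect.~4 by passing to observer-error coordinates and exploiting a cascade structure. First I would introduce the observer errors $\widetilde w_r,\widetilde w_d,\widetilde z$ from \dref{err-w-z-def}. Subtracting \dref{Sch} from \dref{wr-obser}--\dref{Sch-obser-r} gives the autonomous system \dref{Sch-obser-err-z}, which involves neither $z$ nor $w$; via the boundedly invertible transformations \dref{trans-tz-u} and \dref{tu-u-trans-nn} it is equivalent to \dref{te-equ}, whose generator $\AAA$ has, under the hypotheses of Theorem \ref{Thm-obser}, a Riesz basis of eigenvectors and spectrum in the open left half-plane. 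Hence \dref{Sch-obser-err-z} generates an exponentially stable strongly continuous semigroup on $\rline^{n_w}\times\hline$, giving the stated observer-error estimate. I would also record that $F=k(1,1)C_m+\Kscr$ from \dref{F} is an admissible observation operator for this semigroup --- the eigenvalues of $\AAA$ have eventually separated imaginary parts and the point-observation sequence is bounded, so the diagonal Carleson criterion applies to \dref{te-equ} --- so that by \dref{Eilat} and the exponential decay of $\widetilde w$ the scalar feedback-correction signal $\psi(t):=F[\widetilde z(\cdot,t)]+m_w^\top\widetilde w(t)$ lies in $L^2_\beta[0,\infty)$ for some $\beta<0$.

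Next I would substitute $\widehat z=z+\widetilde z$ and $\widehat w=w+\widetilde w$ into the output feedback law \dref{state-con-out}: it becomes the state feedback law \dref{state-con} perturbed by $\psi$, which enters the abstract plant \dref{Sch-ab} through the control operator $B_r$. Running the backstepping transformation $\Fscr$ of \dref{trans-z-u}, followed by the error transformation \dref{tu-u-trans} with the solution $m$ of \dref{m-equ} --- the computation \dref{Sch-state-close-z-u}--\dref{sol-me}, using $m_w=m'(1)$ --- I expect $\widetilde v:=\Fscr[z]-m^\top w$ to satisfy $\dot{\widetilde v}(\cdot,t)=(\aline-c_sI)\widetilde v(\cdot,t)+B_r\psi(t)$ with $e_y(t)=C_{e\L}\Fscr^{-1}[\widetilde v(\cdot,t)]$, where $\aline$ is the skew-adjoint operator \dref{Alina}. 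Since the eigenvalues of $\aline-c_sI$ are $i\pi^2j^2+\Oscr(1)$ with orthogonal eigenvectors $\cos(j\pi\cdot)$, since $B_r$ is admissible for the group it generates (diagonal Carleson criterion) and since $C_e\Fscr^{-1}$ is admissible for that group by Proposition \ref{utolso}, Corollary \ref{Merkel} shows that $(\aline-c_sI,B_r,C_{e\L}\Fscr^{-1},0)$ is a well-posed, strictly proper, exponentially stable regular linear system.

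For the regulation claim I would write $\widetilde v(\cdot,t)$ as the sum of the free evolution $e^{(\aline-c_sI)t}\widetilde v(\cdot,0)$ and the convolution term $\int_0^te^{(\aline-c_sI)(t-\sigma)}B_r\psi(\sigma)\dd\sigma$: the first contributes to $e_y$ a function in $L^2_\alpha$ for every $\alpha>-c_s$ by \dref{Eilat}, and the second contributes a function in $L^2_\alpha$ because the input--output map of the exponentially stable well-posed system above is bounded on $L^2_\alpha[0,\infty)$ for $\alpha>-c_s$; choosing $\alpha\in(\max\{\beta,-c_s\},0)$ yields $e_y\in L^2_\alpha[0,\infty)$. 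If $C_e$ is bounded I would instead estimate $|e_y(t)|\leq\|C_e\Fscr^{-1}\|\,\|\widetilde v(\cdot,t)\|$, bound the free term by $Me^{-c_st}\|\widetilde v(\cdot,0)\|$, and bound the convolution term by splitting $[0,t]$ at $t/2$ and using that an admissible control operator for an exponentially stable semigroup is infinite-time admissible, together with $\|\psi\|_{L^2(t/2,t)}\leq e^{\beta t/2}\|\psi\|_{L^2_\beta}$ and $\|\psi\|_{L^2(0,\infty)}\leq\|\psi\|_{L^2_\beta}$; this gives $|e_y(t)|\leq m_0e^{-\mu_0t}$. Existence, uniqueness and continuity of the closed-loop solution would follow from the triangularity in the coordinates $(\widetilde w_r,\widetilde w_d,\widetilde z,z,w)$: the first block is the autonomous well-posed exponentially stable system of Step~1 and determines $\psi\in L^2_{loc}[0,\infty)$; then $z$ is the state of the closed-loop system $\Sigma_{cl}$ of Proposition \ref{Simy} cascaded with the exosystem and additionally driven by $B_r\psi$, so $z\in C([0,\infty);\hline)$, and finally $\widehat z=z+\widetilde z$, $\widehat w=w+\widetilde w$ are continuous as well, with uniqueness inherited from each block.

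The main obstacle I anticipate is exactly the handling of $\psi$: because it contains the boundary trace $\widetilde z(1,t)$, it is controlled only in a weighted $L^2$ sense and not pointwise, so one cannot treat the output feedback loop as a state-feedback loop plus an exponentially small additive perturbation in $C([0,\infty);\hline)$. Making the argument rigorous therefore requires the full well-posedness of two auxiliary systems --- the observer-error system of Sect.~5 with $F$ as an admissible output, and the $\widetilde v$-system with $B_r$ as an admissible input and $C_e\Fscr^{-1}$ as an admissible output --- which is where Corollary \ref{Merkel}, Proposition \ref{utolso} and the Carleson-type admissibility checks are essential. The remaining steps (deriving the $\widetilde v$-equation by the transformations of Sect.~4, the triangular bookkeeping, and the bounded-$C_e$ estimate) are routine variants of computations already carried out in Sects.~4--5.
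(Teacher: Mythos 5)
Your proposal is correct and follows essentially the same route as the paper's proof: pass to the observer-error coordinates to get the cascade \dref{Sch-output-closed-equiv}--\dref{Sch-output-closed-equiv_bis}, apply the transformations \dref{trans-z-u} and \dref{tu-u-trans} to reduce the $(w,z)$-block to the $\widetilde v$-equation \dref{ORSch-tu-closed} driven through $B_r$ by the observer-error boundary signal, control that signal via the Carleson-type admissibility of the trace on the diagonal $\widetilde e$-system plus the exponential decay of $\widetilde w$, and finish with Proposition \ref{utolso}. The only (harmless) divergence is at the last step: the paper splits the forcing into a weighted-$L^2$ part and a pointwise exponentially decaying part and cites a lemma from \cite{ZhouWeiss_SIAM} to get $\|\widetilde v(\cdot,t)\|\leq M_0e^{-\mu_0 t}$ before concluding, whereas you obtain $e_y\in L^2_\alpha$ directly from the boundedness of the input--output map of the well-posed stable system $(\aline-c_sI,B_r,C_{e\L}\Fscr^{-1},0)$ on weighted $L^2$ spaces and reserve the pointwise state estimate for the bounded-$C_e$ case.
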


\begin{proof}
Using the error variables $\widetilde{w}$ and $\widetilde{z}$ defined
before \dref{Sch-obser-err-z}, we can write an equivalent system to
\dref{Sch-output-closed}-\dref{Sch_bis} as follows: \vspace{-1mm}
\begin{equation} \label{Sch-output-closed-equiv}
   \left\{\begin{array}{l} z_t(x,t) \m=\m -iz_{xx}(x,t)+h(x)z(x,t)+
   g(x)d_1(t),\crr\disp z_x(0,t) \m=\m -iqz(0,t)+d_2(t),\crr\disp
   z_x(1,t) \m=\m k(1,1)[z(1,t)+\widetilde{z}(1,t)]+ m_w^\top[w(t)+
   \widetilde{w}(t)]\crr\disp\ \ \ \ \ \ \ \ \ \ \ \ \
   +\int_0^1 k_x(1,\xi)[z(\xi,t)+\widetilde{z}(\xi,t)]\dd\xi,\crr\disp
   \dot{w}(t) \m=\m Sw(t),\qquad w(0) \m=\m w_0\in\rline^{n_w}, 
   \end{array}\right.
\end{equation}
\begin{equation} \label{Sch-output-closed-equiv_bis}
   \left\{\begin{array}{l} \dot{\widetilde{w}}_r(t) \m=\m (S_r+l_r
   q_r^\top)\widetilde{w}_r(t),\qquad \dot{\widetilde{w}}_d \m=\m S_d
   \widetilde{w}_d(t)+l_d\widetilde{z}(1,t),\crr\disp \widetilde
   {z}_t(x,t) \m=\m -i\widetilde{z}_{xx}(x,t)+h(x)\widetilde{z}(x,t)+
   g(x) q^\top_{d_1}\widetilde{w}_d(t)+l(x)\widetilde{z}(1,t),\crr
   \disp \widetilde{z}_x(0,t) \m=\m -iq\widetilde{z}(0,t)+q^{\top}
   _{d_2} \widetilde{w}_d(t),\qquad \widetilde{z}_x(1,t) \m=\m l_0
   \widetilde{z}(1,t). \end{array}\right.
\end{equation}
The ``$(\widetilde{w}_r,\widetilde{w}_d,\widetilde{z})$-part'' in
\dref{Sch-output-closed-equiv_bis} has been shown to be exponentially
stable in Theorem \ref{Thm-obser}. Now we only need to consider the
``$(w,z)$-part'' in \dref{Sch-output-closed-equiv}, which we rewrite
as
\begin{equation} \label{Sch-output-closed-equiv-wz}
   \left\{\begin{array}{l} z_t(x,t) \m=\m -iz_{xx}(x,t)+h(x)z(x,t)+
   g(x)d_1(t),\crr \disp z_x(0,t) \m=\m -iqz(0,t)+d_2(t),\crr\disp
   z_x(1,t) \m=\m k(1,1)[z(1,t)+\widetilde{z}(1,t)] + m_w^\top[w(t)+
   \widetilde{w}(t)]\crr\disp\ \ \ \ \ \ \ \ \ \ \ \ \ \ \ \
   + \int_0^1 k_x(1,\xi)[z(\xi,t) + \widetilde{z}(\xi,t)]\dd\xi,\crr
   \disp \dot{w}(t) \m=\m Sw(t),\qquad w(0)
   \m=\m w_0\in\rline^{n_w}.\end{array}\right.
\end{equation}
Under the backstepping transformation \dref{trans-z-u}, the
``$z$-part'' of system \dref{Sch-output-closed-equiv-wz} can be
converted into the following equivalent system:
$$ \left\{\begin{array}{l} v_t(x,t) \m=\m -iv_{xx}(x,t)-c_sv(x,t)+
   \Fscr[g](x)d_1(t)-k(x,0)d_2(t),\crr\disp v_x(0,t) \m=\m d_2(t),
   \crr\disp v_x(1,t) \m=\m k(1,1)\widetilde{z}(1,t) + \int_0^1 k_x
   (1,\xi)\widetilde{z}(\xi,t)\dd\xi + m_w^\top[w(t)+\widetilde{w}
   (t)]. \end{array}\right.$$
Further, by the transformation \dref{tu-u-trans}, the above system
is equivalent to
\begin{equation} \label{ORSch-tu-closed}
   \left\{\begin{array}{l}\disp \widetilde{v}_{t}(x,t) \m=\m -i
   \widetilde{v}_{xx}(x,t)-c_s\widetilde{v}(x,t),\crr\disp
   \widetilde{v}_x(0,t)=0,\crr\disp \widetilde{v}_x(1,t) \m=\m
   k(1,1)\widetilde{z}(1,t)+\int_0^1 k_x(1,\xi)\widetilde{z}(\xi,t)
   \dd\xi + m_w^\top\widetilde{w}(t).\end{array}\right.
\end{equation}
Note that this is different from the system \dref{ORSch-tu}, that was
derived for the case of state feedback.

Now we show that $\|\widetilde{v}(\cdot,t)\|\leq M_0e^{-\mu_0t}$ for
some $M_0,\mu_0>0$. To do this, first we show that $\widetilde{e}(1,
\cdot)$ in \dref{te-equ} belongs to $L^2_{-\alpha_o}[0,\infty)$ for
some $\alpha_o\in(0,c_o/2)$, where $L^2_{-\alpha_o}[0,\infty)$ is
defined after \dref{ourerr-def}. Define the sequence $(c_j)_{j\in
\nline}$ by $c_j=\cos(j\pi)$. Obviously, this sequence satisfies the
Carleson measure criterion, see \cite[Definition 5.3.1]{obs_book}.
Define the observation operator $\mathcal{C}z=\sum_{j=0}^{\infty}c_j
z_j$, where $z=\sum_{j=0}^{\infty}z_j\cos(j\pi x)$ with $(z_j)_{j=0}
^\infty\in l^2$. From the proof of Theorem \ref{Thm-obser} (Case II)
we have that (i) system \dref{te-equ} is associated with a diagonal
group $\tline$ with $(\tline_t z)_j=z_je^{(j\pi^2i-c_o)t}\ (\forall
j\in\nline)$ on $l^2$; (ii) the generator $A_0$ of the diagonal
group $\tline$ satisfies $A_0\cos(j\pi x)=\l_j\cos(j\pi x)$ with
$\l_j=j\pi^2i-c_o$; (iii) $\widetilde{e}(1,t)=\mathcal{C}\widetilde
{e}(x,t)$. Moreover, it is easy to verify that \vspace{-2mm}
$$ \sum_{{\rm Im}\l_j\in[n,n+1)}|c_j|^2 \m\leq\m 1 \FORALL
   n\in\zline \m.$$
It follows from \cite[Proposition 5.3.5.]{obs_book} that
$\mathcal{C}$ is an admissible observation operator for $\tline$.
With \cite[Proposition 4.3.6]{obs_book} we get that $\widetilde{e}
(1,\cdot)\in L^2_{-\alpha_o}[0,\infty)$ for some $\alpha_o\in(0,
c_o/2)$: \vspace{-2mm}
\begin{equation} \label{eLt-est}
   \int_0^\infty |e^{\alpha_os}\widetilde{e}(1,s)|^2 \dd s :=\m C_1
   \m<\m \infty \m.
\end{equation}
By \dref{trans-tz-u} and \dref{tu-u-trans-nn}, we get
$\widetilde{z}(1,t)=\widetilde{e}(1,t)+n(1)^{\top}\widetilde{w}_d(t)$.
From Theorem \ref{Thm-obser} and \dref{eLt-est} we know that
\vspace{-2mm}
$$ k(1,1)\widetilde{z}(1,t) + \int_0^1 k_x(1,\xi)\widetilde{z}(\xi,t)
   \dd\xi + m_w^{\top}\widetilde{w}(t) :=\m \eta_1(t)+\eta_2(t).$$
with \vspace{-2mm}
$$ \begin{array}{l} \eta_1(t)=k(1,1)\widetilde{e}(1,t), \ \ \ \ \crr
   \disp \eta_2(t)=k(1,1)n(1)^{\top}\widetilde{w}_d(t)+\int_0^1 k_x
   (1,\xi)\widetilde{z}(\xi,t)\dd\xi + m_w^{\top}\widetilde{w}(t),
   \end{array}$$
which satisfies 
\begin{equation} \label{eta1eta2}
   \m\ \ \int_0^\infty |e^{\alpha_os}\eta_1(s)|^2 \dd s:=k^2(1,1)C_1 <
   \infty,\ \ \ |\eta_2(t)|\leq M_1e^{-\mu_1t} \ \ \forall t\geq 0,
\end{equation}
for some $M_1,\mu_2>0$. Using the operator $\aline$ from \dref{Alina},
we write the system \dref{ORSch-tu-closed} as \vspace{-2mm}
$$ \frac{\dd}{\dd t}\widetilde{v}(\cdot,t) \m=\m (\aline-c_s I)
   \widetilde{v}(\cdot,t) + B\left[\eta_1(t)+\eta_2(t)\right],$$
where $B=i\delta(\cdot-1)$. Clearly $e^{(\aline -c_s I)t}$ is
exponentially stable. As in the proof of Proposition \ref{Brighitte},
we have that $B$ is an admissible control operator for $e^{(\aline
-c_s I)t}$. Thus, it follows from \cite[Proposition 4.2.5]{obs_book}
that the solution $\widetilde{v}$ is a continuous $L^2[0,1]$-valued
function of $t$ given by \vspace{-2mm}
\begin{equation} \label{tv-expr}
   \widetilde{v}(\cdot,t) \m=\m e^{(\aline-c_s I)t}\widetilde{v}
   (\cdot,0) + \int_0^t e^{(\aline-c_s I)(t-s)}B[\eta_1(s)+\eta_2(s)]
   \dd s \m.
\end{equation}
Moreover, from the exponential stability of $e^{(\aline-c_s I)t}$ and
\cite[Lemma 2.1]{ZhouWeiss_SIAM}, we have that $\|\widetilde{v}
(\cdot,t)\|\leq M_0e^{-\mu_0t}$ for some $M_0,\mu_0>0$. Noting the
formula \dref{sol-me} for $e_y$ and Proposition \ref{utolso}, the
admissibility of the observation operator $C_e$ implies that the
tracking error system is exponentially stable in the sense that $e_y=
C_{e\L}\Fscr^{-1} [\widetilde{v}]\in L^2_{\alpha}[0,\infty)$ with
$\alpha\in(-\mu_1,0)$.  In particular, if the observation operator
$C_e$ is bounded, then by the boundedness of the transformation
$\Fscr^{-1}$ there exists a constant $C_2>0$ such that \m $|e_y(t)|=
|C_e\Fscr^{-1}[\widetilde{v}] (\cdot,t)|\leq C_2\|\widetilde{v}
(\cdot,t)\|\leq C_2 M_0 e^{-\mu_0 t}$, so that \dref{ey-to0} holds.

The inequality in this theorem follows from Theorem \ref{Thm-obser}.
By \dref{trans-z-u} and \dref{tu-u-trans}, we have \vspace{-2mm}
$$z(x,t) \m=\m \Fscr^{-1}[\widetilde{v}+mw](x,t).$$
Since $\lim_{t\to\infty}\|\widetilde{v}(\cdot,t)\|=0$, $w(t)$ is
bounded for all $t\geq0$ and the transformation $\mathcal{F}^{-1}$ is
bounded, we know that $\|z(\cdot,t)\|$ is bounded for all $t\geq0$.
It follows from the inequality in this theorem that all internal
signals $z(\cdot,t),w(t),\widehat{z}(\cdot,t),\widehat{w}(t)$ are
bounded.
\end{proof}

\begin{remark} {\rm
A very concise version of this paper, with weaker results and missing
proofs, was presented at a conference \cite{ZhouWeiss_IFAC:2017}.}
\end{remark}

\noindent
{\bf Acknowledgment.} The second author is grateful to Gail Weiss
(his daughter) for help in the proof of Proposition \ref{Gail}.



\begin{thebibliography}{99}

\bibitem{AsOr:03} A.~Astolfi and R.~Ortega, \m Immersion
 and invariance: A new tool for stabilization and adaptive control
 of nonlinear systems, {\it IEEE Trans. Aut. Control} {\bf 48} (2003),
 590-606.

\bibitem{Byrnes_2000} C.I.~Byrnes, I.G.~Lauk\'{o}, D.S. Gilliam and
 V.I. Shubov, \m Output regulation for linear distributed parameter
 systems, {\it IEEE Trans. Automat. Control} {\bf 45} (2000),
 2236-2252.

\bibitem{ChWe:15} J.-H.~Chen and G.~Weiss, \m Time-varying additive
 perturbations of well-posed linear systems, {\it Math. of Control,
 Signals and Systems} {\bf 27} (2015), 149-185.

\bibitem{CWW_2001} R.F.~Curtain, G.~Weiss and M.~Weiss, \m
 Stabilization of irrational transfer functions by controllers with
 internal loop. In A. A. Borichev and N. K. Nikolskii (Eds),
 Operator Theory: Advances and Applications, vol. 129, Birkh\"auser,
 Basel, pp. 179-207.

\bibitem{Davison1976tac} E.J.~Davison, \m The robust control of a
 servomechanism problem for linear time-invariant multivariable
 systems, {\it IEEE Trans. Automat. Control} {\bf 21} (1976), 25-34.

\bibitem{Deu2015auto} J.~Deutscher, \m A backstepping approach to the
 output regulation of boundary controlled parabolic PDEs, {\it
 Automatica} {\bf 57} (2015), 56-64.

\bibitem{Deu2015tac} J.~Deutscher, \m Backstepping design of robust
 output feedback regulators for boundary controlled parabolic PDEs,
 {\it IEEE Trans. Autom. Control} {\bf 61} (2016), 2288-2294.

\bibitem{Fran:1976} B.A.~Francis and W.M.~Wonham, \m The internal
 model principle of control theory, {\it Automatica} {\bf 12} (1976),
 457-465.

\bibitem{Guo_2001} B.Z.~Guo, \m Riesz basis approach to the
 stabilization of a flexible beam with a tip mass, {\it SIAM J.
 Control \& Optimization} {\bf 39} (2001), 1736-1747.

\bibitem{Immonen2005tac} E.~Immonen and S.~Pohjolainen, \m Output
 regulation of periodic signals for DPS: an infinite-dimensional
 signal generator, {\it IEEE Trans. Aut. Control} {\bf 50}
 (2005), 1799-1804.

\bibitem{ImmPoh_2006} E.~Immonen and S.~Pohjolainen, \m Feedback
 and feedforward output regulation of bounded uniformly continuous
 signals for infinite-dimensional systems, {\it SIAM J. Control
 \& Optim.} {\bf 45} (2006), 1714-1735.


\bibitem{Isi:95} A.~Isidori, \m {\em Nonlinear Control Systems}
 (3rd ed.), Springer-Verlag, London, 1995.

\bibitem{IsBy:90} A.~Isidori and C.I.~Byrnes, \m Output regulation
 of nonlinear systems, {\em IEEE Trans. Automatic Control} {\bf
 35} (1990), 131-140.

\bibitem{JaWe:08} B.~Jayawardhana and G.~Weiss, \m Tracking and
 disturbance rejection for fully actuated mechanical systems, {\em
 Automatica} {\bf 44} (2008), 2863-2868.

\bibitem{KIF} H.W.~Knobloch, A.~Isidori and D.~Flockerzi, {\it
 Topics in Control Theory,} Birkh\"auser Verlag, Basel, 1993.


\bibitem{LBscl2015} P.O.~Lamare and N.~Bekiaris-Liberis, \m Control
 of $2\times2$ linear hyperbolic systems: backstepping-based
 trajectory generation and PI-based tracking, {\it Systems \& Control
 Lett.} {\bf 86} (2015), 24-33.

\bibitem{Liu_2018} J.J.~Liu, J.M.~Wang and Y.P.~Guo, \m Output
 tracking for one-dimensional Schr\"{o}dinger equation subject to
 boundary disturbance, {\it Asian J. Control} {\bf 20} (2018), 1-10.

\bibitem{LT1992DIE}  I.~Lasiecka, R.~Triggiani, \m  Optimal
 regularity, exact controllability and uniform stabilization of
 Schr\"{o}dinger equations with Dirichlet control, {\em Differential
 and Integral Equations} {\bf 5} (1992), 521-535.

\bibitem{LT2006JEE} I.~Lasiecka, R.~Triggiani, \m Well-posedness and
 sharp uniform decay rates at the $L^2(\Omega)$-level of the
 Schr\"{o}dinger equation with nonlinear boundary dissipation, {\em
 J. Evolution Equations} {\bf 6} (2006), 485-537.

\bibitem{LTZ2004I} I.~Lasiecka, R.~Triggiani, X.~Zhang, \m Global
 uniqueness, observability and stabilization of nonconservative
 Schr\"{o}dinger equations via pointwise Carleman estimates. I.
 $H^1(\Omega)$-estimates, \m {\em J. Inverse and Ill-Posed Problems}
 {\bf 12} (2004), 43-123.

\bibitem{LTZ2004II} I.~Lasiecka, R.~Triggiani and X.~Zhang, \m
 Global uniqueness, observability and stabilization of
 nonconservative Schr\"{o}dinger equations via pointwise Carleman
 estimates. II. $L^2(\Omega)$-estimates, \m {\em J. Inverse and
 Ill-Posed Problems} {\bf 12} (2004), 183-231.

\bibitem{Viv2014tac} V.~Natarajan, D.~Gilliam and G.~Weiss, \m The
 state feedback regulator problem for regular linear systems, {\it
 IEEE Trans. Automat. Control} {\bf 59} (2014), 2708-2723.

\bibitem{Paun_2017} L.~Paunonen, \m Stability and robust regulation of
 passive linear systems, under review, 2017, available on arXiv.

\bibitem{PauPoh_2014} L.~Paunonen and S.~Pohjolainen, \m The
 internal model principle for systems with unbounded control and
 observation, \m {\it SIAM J. Control \& Optim.} {\bf 52} (2014),
 3967-4000.

\bibitem{ReWe_2003} R.~Rebarber and G.~Weiss, \m Internal model based
 tracking and disturbance rejection for stable well-posed systems, \m
 {\it Automatica} {\bf 39} (2003), 1555-1569.

\bibitem{AS-Krstic-book} A. Smyshlyaev and M. Krstic, \m {\it Adaptive
 Control of Parabolic PDEs}, Princeton University Press, Princeton,
 NJ, 2010.

\bibitem{StWe_12} O.J.~Staffans and G.~Weiss, \m A physically
 motivated class of scattering passive linear systems, \m {\it SIAM J.
 on Control and Optimization} {\bf 50} (2012), 3083-3112.

\bibitem{obs_book} M.~Tucsnak and G.~Weiss, \m {\it Observation and
 Control for Operator Semigroups}, Basel: Birkh\"auser Verlag, 2009.

\bibitem{TuWe_14_survey} M.~Tucsnak and G.~Weiss, \m Well-posed
 systems - the LTI case and beyond, \m {\it Automatica} {\bf 50}
 (2014), 1757-1779.

\bibitem{art01} G.~Weiss, \m Admissibility of unbounded control
 operators, {\it SIAM J. Control and Optimization} {\bf 27} (1989),
 527-545.

\bibitem{art12} G.~Weiss. \m Regular linear systems with feedback,
 {\it Mathematics of Control, Signals and Systems} {\bf 7} (1994),
 23-57.

\bibitem{WeCu_97} G.~Weiss and R.F.~Curtain, \m Dynamic stabilization
 of regular linear systems, {\it IEEE Trans. on Automatic Control}
 {\bf 42} (1997), 1-18.

\bibitem{XuWe_2011} C.Z.~Xu and G.~Weiss, \m Eigenvalues and
 eigenvectors of semigroup generators obtained from diagonal
 generators by feedback, {\it Commun. in Information and
 Systems} {\bf 11} (2011), 71-104.

\bibitem{ZhWe:11} X.~Zhao and G.~Weiss, \m Stability properties of
 coupled impedance passive LTI systems, {\it IEEE Trans. on Automatic
 Control} {\bf 56} (2011), 88-99.


\bibitem{ZhouWeiss_SIAM} H.C.~Zhou and G.~Weiss, \m
 Output feedback exponential stabilization for one-dimensional  unstable
 wave equations with boundary control matched disturbance, \m
 {\em SIAM J. Control Optim.} {\bf 56} (2018), 4098-4129.

\bibitem{ZhouWeiss_IFAC:2017} H.C.~Zhou and G.~Weiss, \m Solving the
regulator problem for a 1-D Schr\"{o}dinger equation via backstepping,
{\em 20th IFAC World Congress},  Toulouse, France, July 2017.


%
%
%
%
%
\end{thebibliography}
\end{document}